\documentclass[12pt]{amsart}

\usepackage{amsmath}
\usepackage{amssymb}
\usepackage{amsfonts}
\usepackage{amsthm}
\usepackage{enumerate}
\usepackage{color}
\usepackage{psfrag}
\usepackage{mathrsfs}
\usepackage{amscd}
\usepackage[hyperindex,backref=page]{hyperref}
\usepackage{graphicx,color}
\usepackage{tikz} 
\usepackage{enumitem}
\usepackage[utf8]{inputenc}

\newtheorem{thm}{Theorem}[section]
\newtheorem{cor}[thm]{Corollary}
\newtheorem{lemma}[thm]{Lemma}
\newtheorem{prop}[thm]{Proposition}

\theoremstyle{definition}

\newtheorem{ex}[thm]{Example}

\newtheorem*{theorem-reg-equality}{Theorem \ref{reg-equality}} 
\newtheorem*{theorem-S_2main}{Theorem \ref{S_2main}} 
\newtheorem*{theorem-ineq of dual mod}{Theorem \ref{ineq of dual mod}} 
\newtheorem*{corollary-linear resol}{Corollary \ref{linear resol}} 
\newtheorem*{corollary-almost non-increasing}{Corollary \ref{almost non-increasing}}  
\newtheorem*{thm-second-power}{Theorem \ref{second-power}}  
\newtheorem*{cor-Serre depth Decrease}{Corollary \ref{Serre depth Decrease}}
\newtheorem*{thm-regQuotient}{Theorem \ref{regQuotient}} 
\newtheorem*{second-third-fourth}{Theorems \ref{PowerTwo},\ref{3rd symbolic power},\ref{4th symbolic power}}
\newtheorem*{thm-char-of-CM-S_2}{Theorem \ref{char of CM and S_2}} 

\theoremstyle{remark}

\DeclareMathOperator{\link}{link}
\DeclareMathOperator{\pd}{proj.dim}
\DeclareMathOperator{\set}{set}

\DeclareMathOperator{\depth}{depth}

\DeclareMathOperator{\supp}{supp}
\DeclareMathOperator{\reg}{reg}

\DeclareMathOperator{\indeg}{indeg}
\DeclareMathOperator{\height}{ht}

\DeclareMathOperator{\Min}{Min}
\DeclareMathOperator{\bight}{bight}
\DeclareMathOperator{\Tor}{Tor}
\DeclareMathOperator{\Hom}{Hom}
\DeclareMathOperator{\Ext}{Ext}
\DeclareMathOperator{\st}{star}

\begin{document}


\title[Regularity and depth of symbolic powers of complementary edge ideals]{Regularity, depth and Serre depth of symbolic powers of complementary edge ideals }

\author{Antonino Ficarra, Somayeh Moradi, and Yuji Muta}

\address[A. Ficarra]{BCAM – Basque Center for Applied Mathematics, Mazarredo 14, 48009 Bilbao, Basque Country – Spain, Ikerbasque, Basque Foundation for Science, Plaza Euskadi 5, 48009 Bilbao, Basque Country – Spain}
\email{aficarra@bcamath.org, antficarra@unime.it}

\address[S. Moradi]{ Department of Mathematics, Faculty of Science, Ilam University, P.O.Box 69315-516, Ilam, Iran}
\email{so.moradi@ilam.ac.ir}

\address[Y. Muta]{Department of Mathematics, Okayama University, 3-1-1 Tsushima-naka, Kita-ku, Okayama 700-8530 Japan.}
\email{p8w80ole@s.okayama-u.ac.jp}

\begin{abstract}
Let $G$ be a finite simple graph and let $I_c(G)$ denote its complementary edge ideal. In this paper, we study the Castelnuovo--Mumford regularity, the depth and Serre depth for $(S_{2})$ of the symbolic powers of $I_c(G)$. We derive explicit formulas for $\reg I_c(G)^{(k)}$ and $\reg I_c(G)^{(k)}/I_c(G)^k$ for every $k\geq1$, in terms of the number and the sizes of the connected components of $G$. As applications, we determine their asymptotic behavior, and completely characterize the symbolic powers of complementary edge ideals having linear resolutions. We then investigate the symbolic depth function $k\mapsto\depth S/I_c(G)^{(k)}$. We determine explicitly its values for $k=2,3,4$, and establish bounds for the asymptotic depth.
More generally, for an arbitrary squarefree monomial ideal $I$, we compare the local cohomology modules of $S/I^{(k)}$ for different symbolic powers, and derive corresponding inequalities for the depth and the Serre depth. Applying this to complementary edge ideals, we obtain $\depth S/I_{c}(G)^{(t)}\geq\depth S/I_{c}(G)^{(k)}$ for $k\geq 2t-1$, despite the fact that the symbolic depth function need not be non-increasing. Finally, we characterize complementary edge ideals satisfying Serre's condition $(S_r)$, and determine the Serre depth for $(S_{2})$ of all symbolic powers of complementary edge ideals. In particular, we obtain a combinatorial characterizations of Serre's condition $(S_{2})$ and of the Cohen--Macaulayness of $S/I_{c}(G)^{(k)}$ for every $k\geq 1$.
\end{abstract}


\subjclass[2020]{Primary: 13F55, 13H10, 05C75; Secondary: 13D45, 05C90, 55U10}


\keywords{Complementary edge ideals, symbolic powers, Castelnuovo--Mumford regularity, depth, Serre depth, local cohomology modules.}


\thanks{}
	

\maketitle


\section{Introduction}

Squarefree monomial ideals form a natural bridge between commutative algebra and combinatorics. Through the Stanley--Reisner correspondence, algebraic invariants of such an ideal can be read off from the combinatorics of an associated simplicial complex, and vice versa. When the ideal corresponds to a graph, this dictionary becomes especially transparent. 
Classical examples include edge ideals, cover ideals, and their generalizations, whose algebraic invariants often encode subtle combinatorial properties of the underlying graph. A recent development in this direction is the study of squarefree monomial ideals generated in degree $n-2$ in $S=K[x_1,\ldots,x_n]$ via the notion of complementary edge ideals, introduced independently in \cite{hqm2026} and \cite{fm2025}. Every squarefree monomial ideal generated in degree $n-2$ is a complementary edge ideal. Since their introduction, several aspects of these ideals have been investigated, including their homological properties, ordinary and symbolic Rees algebras, associated primes, and homological shifts; see, for instance, \cite{bcj2026,f2026,fm2025,fmRees2025,fmm2026,hqm2026,lwz2025,l2026,rs2025}.

Let $G$ be a finite simple graph on the vertex set $[n]=\{1,\ldots,n\}$ with $E(G)\neq \emptyset$, and let $S=K[x_1,\ldots,x_n]$ be the polynomial ring over a field $K$. The \emph{complementary edge ideal} of $G$ is the squarefree monomial ideal
$$I_c(G)=\left(\frac{x_{1}\cdots x_{n}}{x_{i}x_{j}}:\{i,j\}\in E(G)\right).$$
Thus, the generators of $I_c(G)$ correspond to the complements of the edges of $G$. Despite the simplicity of this definition, the algebraic properties of $I_c(G)$ exhibit a rich dependence on the combinatorial structure of $G$. Complementary edge ideals exhibit, in several respects, a more tractable algebraic structure than their classical counterparts, namely edge ideals. A particularly significant instance was established in \cite{fm2025}, where the Castelnuovo--Mumford regularity of every ordinary power of $I_c(G)$ was explicitly determined by graph invariants, and the depth function $k\mapsto \depth S/I_c(G)^k$ was shown to be non-increasing. This behavior is especially noteworthy in comparison with edge ideals, for which the analogous monotonicity for depth is expected, but remains open in general, known only for certain classes of graphs.
The purpose of this paper is to investigate the Castelnuovo--Mumford regularity, depth and Serre depth for $(S_2)$ of the symbolic powers $I_c(G)^{(k)}$ for all $k\geq 1$.
 
The behavior of homological invariants under ordinary and symbolic powers is one of the central themes in the theory of monomial ideals. Even for squarefree monomial ideals $I$ arising naturally from graphs, determining the functions
\[
k\longmapsto \reg I^{(k)}
\qquad\text{and}\qquad
k\longmapsto \depth S/I^{(k)}
\]
can be a difficult problem. Complementary edge ideals provide an interesting setting in which the regularity functions $\reg I^{(k)}$ and $\reg I^{(k)}/I^k$ can be determined explicitly, while the depth function displays a considerably more intricate behavior.

Our results on regularity provide explicit combinatorial descriptions for the regularity of all symbolic powers $I^{(k)}$ and of all quotient modules $I^{(k)}/I^k$ for a complementary edge ideal $I=I_c(G)$, yielding a characterization of graphs $G$ for which the symbolic power $I^{(k)}$ has a linear resolution.  Our approach provides a multigraded combinatorial framework for the local cohomology of  $S/I_c(G)^{(k)}$.
More precisely, for each multidegree ${\bf a}\in\mathbb Z^n$ we associate to $G$ a hypergraph whose edges are determined by the nonedges and triangles of $G$ satisfying suitable inequalities depending on ${\bf a}$ and $k$. By combining Takayama's formula \cite{t2005} with combinatorial Alexander duality \cite{bt2009} and Hochster's formula \cite{H1977}, we express the multigraded components of the local cohomology modules of $S/I_c(G)^{(k)}$ in terms of the reduced homology, or equivalently suitable multigraded Betti numbers of this auxiliary hypergraph, yielding the regularity description. We note that the formula for $\reg I^{(k)}$ was also obtained independently, and almost simultaneously by Hien and Kumar in~\cite{hk2026}, with a different approach using symbolic polyhedra (to determine the slope of the regularity function), degree complexes, Takayama's formula, and monomial localization.

Let $c(G)$ denote the number of connected components of $G$ which are not isolated vertices, and set $\operatorname{sc}(G)=\min\{|V(C)|: C \text{ is a connected component of $G$ with at least two vertices}\}$. 
We prove
\begin{theorem-reg-equality} 
Let $G$ be a graph on the vertex set $[n]$. For each $k\geq1$, we have 
\[
\reg I_{c}(G)^{(k)}=
\begin{cases}
(n-2)k, & \text{if } c(G)=1, \\
\max\{(n-2)k, (n-sc(G))k+sc(G)-1\}, & \text{ if } c(G)>1.
\end{cases}
\]
\end{theorem-reg-equality}

Together with the formula for the regularity of ordinary powers obtained in \cite{fm2025}, this completely describes the relation between $\reg I_c(G)^{(k)}$ and $\reg I_c(G)^k$. In Corollary~\ref{reg-compare} we show that $\reg I_c(G)^{(k)}\leq \reg I_c(G)^k$ for every $k\geq1,$
characterize the graphs for which  equality occurs, prove that the sequence $(\reg I_c(G)^{(k)}/k)_{k\ge1}$ is non-increasing, and obtain
\[
\lim_{k\to\infty}
\frac{\reg I_c(G)^{(k)}}{k}
=
n-2.
\]
As a further consequence, in Corollary~\ref{linear resol}, we completely classify the symbolic powers of complementary edge ideals having a linear resolution.

A further highlight of Section \ref{sec3} is Theorem~\ref{regQuotient}, which gives an explicit formula for the regularity of the quotient $I_{c}(G)^{(k)}/I_{c}(G)^{k}$, a module that measures the gap between symbolic and ordinary powers.

\begin{thm-regQuotient}
Let $G$ be a graph on $[n]$ and let $k$ be a positive integer with $I_{c}(G)^{(k)}/I_{c}(G)^{k}\neq0$. Then we have $$\reg I_{c}(G)^{(k)}/I_{c}(G)^{k}=\reg I_{c}(G)^{k}-1= (n-2)k+\min\{k,c(G)-1\}-1.$$
\end{thm-regQuotient}

The second part of the paper is devoted to the depth of the symbolic powers of complementary edge ideals.
We already know from \cite[Proposition 5.5]{fmm2026} that the symbolic depth function of complementary edge ideals is not necessarily non-increasing, where an example of a complete multipartite graph $G$ was given for which $\depth S/I_{c}(G)^{(3)}<\depth S/I_{c}(G)^{(4)}$. Motivated by this, we study the behavior of this function, by giving precise values of $\depth S/I_{c}(G)^{(k)}$ for $k=2,3$ and $4$, and establish strong comparison inequalities when the exponents are
sufficiently far apart, more generally for arbitrary squarefree monomial ideals.

We begin with the second symbolic power. First, we characterize graphs for which the second symbolic power $I_c(G)^{(2)}$ is componentwise linear.  In this setting, componentwise linearity is equivalent to having linear quotients, and both properties admit the following graph-theoretic characterization. 

\begin{thm-second-power}
Let $G$ be a graph on $[n]$. Then the following conditions are equivalent:
\begin{enumerate}
    \item [\textup{(a)}]
$I_{c}(G)^{(2)}$ is componentwise linear.
     \item [\textup{(b)}]$I_{c}(G)^{(2)}$ has linear quotients.
      \item [\textup{(c)}] $sc(G)\geq3$, or $G\cong K_2\sqcup (n-2)K_1$.
\end{enumerate}
\end{thm-second-power}

This characterization allows us to compute the depth of second symbolic power and yields consequences for the asymptotic behavior of the symbolic depth function; see Theorem \ref{PowerTwo} and Corollary \ref{asymptotic}. The aforementioned example given in~\cite[Proposition 5.5]{fmm2026}, raises the question of which graphs exhibit a strict increase in depth from the third to the fourth symbolic powers. We answer this question by explicitly determining the depths of the third and fourth symbolic powers for arbitrary graphs. To this aim, we study the local cohomology of $S/I_c(G)^{(k)}$ via the reduced simplicial homology of an appropriate simplicial complex, encoding combinatorial information of the graph. 
These results are summarized below.

 \begin{second-third-fourth}
Let $G$ be a graph on the vertex set $[n]$ with $n\geq3$, and let $A$ be the set of isolated vertices of $G$. If $G-A\not\cong K_2$, then 
	\[
	\begin{aligned}
		\depth S/I_c(G)^{(2)}
		&=
		\begin{cases}
n-2, & \text{if each component of } G-A \text{ is a star graph}, \\
n-3, & \text{otherwise}. 
\end{cases}
		\\[6pt]
		\depth S/I_c(G)^{(3)}
		&=
		\begin{cases}
n-2, & \parbox[t]{8cm}{if $G-A$ is a star or a disjoint union of edges}\\ 
n-\max\{3,\operatorname{reg} I(G^c)\}, & \text{otherwise}.
\end{cases}
		\\[6pt]
		\depth S/I_c(G)^{(4)}
		&=
		\begin{cases}
n-2, & \parbox[t]{8cm}{if $G-A$ is a star or a disjoint union of edges,}\\
n-3, & \text{otherwise}.
\end{cases}
	\end{aligned}
	\]   
Furthermore, if $G-A\cong K_2$, then $\depth S/I_c(G)^{(k)}=n-1$, for all $k\geq 1$.
\end{second-third-fourth}

\smallskip

\noindent
Corollary \ref{comparison of 3rd and 4th}  gives  
$$\depth S/I_{c}(G)^{(4)}-\depth S/I_{c}(G)^{(3)}=\max\{\reg I(G^{c})-3,0\},$$ 
for any graph $G$ with $|V(G)|\geq3$. Therefore, for such a graph, depth increases from the third to the fourth symbolic power if and only if $\reg I(G^{c})>3$. This includes the complete multipartite graph example given in~\cite[Proposition 5.5]{fmm2026}, and shows that the same increase occurs for a wider class of graphs. 
The formulas also show that the depths of the second and fourth symbolic powers of complementary edge ideals do not depend on the characteristic of the base field, while the depth of the third symbolic power and depth increase from the third to the fourth symbolic power may depend on it. 

The asymptotic behavior and monotonicity properties of the depth of symbolic powers of squarefree monomial ideals have been studied extensively; see, for instance, \cite{hktt2017,nt2019,mt2025}. The existence of squarefree monomial ideals whose symbolic depth functions are not non-increasing was shown in~\cite{nt2019, mt2025}, while it remains an open problem whether the symbolic depth function of an edge ideal is always non-increasing. For edge ideals, it is also conjectured that the depth of every symbolic powers is at least that of the corresponding ordinary power, and this is known for the second and third powers \cite{m2026}. For cover ideals, the symbolic depth function is known to be non-increasing \cite[Theorem 3.2]{hktt2017}. As it was mentioned above, the analogous property fails for complementary edge ideals. Motivated by these results, we study comparisons between depths of different symbolic powers of arbitrary squarefree monomial ideals. Our approach is based on the modules $K_{M}^{i}=\Hom_{K}(H_{\mathfrak{m}}^{i}(M),K),$ the $K$-dual of the $i$-th local cohomology module of a finitely generated graded $S$-module $M$, whose dimensions provide a convenient framework for comparing the depth of symbolic powers.
Our main result establishes a comparison between the dimension of these modules, from which we derive corresponding inequalities for both Serre depth and depth. 

It is known by the local duality theorem (see, e.g. \cite[Theorem 3.6.19]{BHBook}) that
$K_{M}^{i}\cong\Ext_{S}^{n-i}(M,\omega_{S})$, 
where $\omega_{S}$ is the canonical module of $S$. 
Recall that the {\it big height }of an ideal $I$, denoted by $\bight I$ is the maximum height of a minimal prime ideal of $I$. Then we prove the following result. 

\begin{theorem-ineq of dual mod}
Let $I$ be a squarefree monomial ideal. Then for any $t\geq 1$ we have
$$\dim K_{S/I^{(t)}}^{i}\leq\dim K_{S/I^{(k)}}^{i}$$
for all $i\geq0$ and $k\geq(\bight I-1)t-(\bight I-2)$.
\end{theorem-ineq of dual mod}

This theorem provides corresponding comparisons for Serre depth $(S_{r})$ of symbolic powers of squarefree monomial ideals, which yields an extension and generalization of \cite[Theorem 2.7]{nt2019}. Let us recall the definition of Serre depth for $(S_{r})$, which was introduced in \cite{ppty2022,ppty2023} for finitely generated modules over polynomial rings and subsequently extended in \cite{fSerre2026} to finitely generated modules over Noetherian local rings and standard graded algebras over a field.

Let $S = K[x_1,\dots,x_n]$ be the polynomial ring, and let $M$ be a finitely generated graded $S$-module. 
The \emph{Serre depth}  for $(S_r)$ of $M$ is defined as
$$S_{r}\text{-}\!\operatorname{depth} M = \min\big\{\, j \; : \; \dim K_M^j \geq j - r + 1 \,\big\}.$$    
As a consequence of Theorem \ref{ineq of dual mod} we obtain

\begin{cor-Serre depth Decrease}
Let $I$ be a squarefree monomial ideal, and let $r\geq2$. Then for any $t\geq 1$ we have 
$$S_{r}\mbox{-}\depth S/I^{(t)}\geq S_{r}\mbox{-}\depth S/I^{(k)} \mbox{ for any } k\geq (\bight I-1)t-(\bight I-2).$$
In particular, we have 
$$\depth S/I^{(t)}\geq\depth S/I^{(k)} \mbox{ for any } k\geq (\bight I-1)t-(\bight I-2).$$
\end{cor-Serre depth Decrease}

Thus, although the depth function is not necessarily non-increasing, its values still satisfy strong inequalities when the exponents are sufficiently far apart. Combining Corollary \ref{comparison of 3rd and 4th} with the preceding comparison theorem, we obtain the following inequalities for an arbitrary graph $G$ with $|V(G)|\geq3$:
$$\depth S/I_{c}(G)^{(2)}\geq\depth S/I_{c}(G)^{(4)}\geq\depth S/I_{c}(G)^{(3)}\geq\depth S/I_{c}(G)^{(k)}\mbox{ for all }k\geq5.$$

Finally, we return to complementary edge ideals and study how the symbolic depth behaves under multi-whiskering. This operation, introduced in \cite{mpt2025}, generalizes the classical whiskering construction of \cite{v1990}. 
Let $G=G_0[n_1,\ldots,n_h]$ be a multi-whisker graph of a graph $G_{0}$ without isolated vertices, and let $S=K[V(G_0)]$, $T=K[V(G)]$ and $N=\sum_{1\leq i\leq h}n_{i}$. In Theorem \ref{multi-whisker}, we prove that, for every $k\geq1$, 
\[
N+\depth S/I_c(G_0)^{(k)}-2
\leq
\depth T/I_c(G)^{(k)}
\leq
N+\depth S/I_c(G_0)^{(k)}.
\]

Thus, after the natural shift induced by the number of added whiskers, multi-whiskering changes the symbolic depth sequence by at most two. The proof makes use of the theory of $d$-Leray simplicial complexes and the result of Kalai and Meshulam \cite{km2006} on intersections of Leray complexes. Combining with  \cite[Proposition 5.9]{fmm2026}, we obtain whisker graphs whose complementary edge ideals have symbolic depth functions that are not non-increasing. 

\smallskip

In Section \ref{sec5}, we investigate the Serre depth for $(S_{2})$, Serre's condition $(S_{2})$ and the Cohen-Macaulay property of symbolic powers of $I_c(G)$. Before studying the $S_{2}$-depth, in Proposition \ref{S_r-I_c(G)}, we first characterize when $S/I_{c}(G)$ satisfies Serre's condition $(S_{r})$ for each $r\geq3$. Hien and Kumar recently obtain a characterization for $(S_{2})$ in \cite[Theorem 4.2]{hk2026}. Our result extends their characterization to all $r\geq2$. In \cite{mt2025}, Terai and the third-named author initiated a systematic study of Serre depth for Stanley--Reisner rings. In particular, they obtained a combinatorial characterization of the $S_{2}$-depth of symbolic powers of Stanley--Reisner ideals and demonstrated that the resulting sequence need not be non-increasing in general. On the other hand, they proved that, for cover ideals of graphs, the $S_{2}$-depth of symbolic powers is non-increasing. These results naturally raise the question of how the Serre depth for $(S_{2})$ behaves for symbolic powers of complementary edge ideals. We show that, in this setting, the Serre depth for $(S_{2})$ of symbolic powers is not necessarily non-increasing, while we also identify conditions under which monotonicity does hold. 

To establish these results, we first develop a general description of the $S_{2}$-depth of symbolic powers of squarefree monomial ideals. More precisely, in Proposition~\ref{S_2General}, we express the $S_{2}$-depth of the symbolic powers of an arbitrary squarefree monomial ideal in terms of the connectivity of appropriate degree complexes. This extends a result related to the study of symbolic powers of unmixed Stanley--Reisner ideals obtained in \cite[Proposition 3.10]{mt2025}. We then specialize this description to complementary edge ideals, and using auxiliary Lemmata~\ref{S_2depth lemma},  \ref{lemma for S_2-depth}, and \ref{S_2Lemma}, we obtain a combinatorial description of the $S_{2}$-depth, as follows. Here, $H\subseteq G$ means that $G$ contains an induced subgraph isomorphic to $H$. 

\begin{theorem-S_2main}
Let $G$ be a graph on $[n]$ with $n\geq3$, without isolated vertices and $I=I_c(G)$. The following statements hold.
\begin{enumerate}
\item[\textup{(a)}] If $G$ is triangle-free, then
\[
(S_{2}\mbox{-}\depth S/I^{(k)})_{k\geq1}=
\begin{cases}
(n-3,n-3,\ldots), & \text{if } C_4\subseteq G, \\
(n-2,n-3,n-3,\ldots), & \text{if } P_4\subseteq G, \, C_4\nsubseteq G, \\
(n-2,n-2,n-3,n-3,\ldots), & \text{if } P_{3}\sqcup K_{1}\subseteq G,\, C_4\nsubseteq G,\, P_4\nsubseteq G, \\
(n-2,n-2,\ldots), & \text{ otherwise}.
\end{cases}
\]
\item[\textup{(b)}] If $G$ contains a triangle, then
\[
S_{2}\mbox{-}\depth S/I^{(k)}=
\begin{cases}
n-4, &
\text{if } K_{5}-e\subseteq G, 
\text{ and } k=7\mbox{ or }k\geq9,\\ 
n-3, &
\text{otherwise}.
\end{cases}
\]
\end{enumerate}
In particular, the following conditions are equivalent: 
\begin{enumerate}
\item[\textup{(1)}] $K_{5}-e\nsubseteq G$. 
\item[\textup{(2)}] $S_{2}\mbox{-}\depth\, S/I_{c}(G)^{(k)}\geq S_{2}\mbox{-}\depth\, S/I_{c}(G)^{(k+1)}$ for all $k\geq1$.
\end{enumerate}
Moreover, if $G$ contains $K_{5}-e$, then 
$S_{2}\mbox{-}\depth S/I_{c}(G)^{(7)}<S_{2}\mbox{-}\depth S/I_{c}(G)^{(8)}$. 
\end{theorem-S_2main}

In \cite[Theorem 4.5]{hk2026}, Hien and Kumar classified the graphs for which $S/I_{c}(G)^{(k)}$ is Cohen--Macaulay for $k\geq3$. Using our description of the $S_{2}$-depth, we complete the classification by including the second symbolic power. In particular, we show that, for every $k\geq2$, Serre's condition $(S_{2})$ is equivalent to Cohen--Macaulayness. 

\begin{thm-char-of-CM-S_2}
Let $G$ be a graph on the vertex set $[n]$ with $n\geq3$, and let $k\geq2$ be an integer. Then $S/I_{c}(G)^{(k)}$ satisfies Serre's condition $(S_{2})$ if and only if $S/I_{c}(G)^{(k)}$ is Cohen--Macaulay. Moreover, these conditions are equivalent to the following conditions:
\begin{enumerate}
\item[\textup{(1)}] If $k=2$, then either  $G\cong K_{n}$, or $G$ has no isolated vertices and every connected component of $G$ is a star graph, or $G\cong K_{2}\sqcup(n-2)K_{1}$. 
\item[\textup{(2)}] If $k\geq3$, then either $G\cong K_{n}$, or $G$ is a star graph, or $G$ is the disjoint union of edges, or $G\cong K_{2}\sqcup(n-2)K_{1}$. 
\end{enumerate}
In particular, the Cohen--Macaulayness of $S/I_{c}(G)^{(k)}$ does not depend on the characteristic of the base field for all $k\geq1$.  
\end{thm-char-of-CM-S_2}



\section{Preliminaries}\label{sec2}
In this section, we review the necessary background from   commutative algebra and combinatorics that will be used throughout the paper. We refer the reader to \cite{BHBook, HHBook, VBook} for the detailed information on combinatorial commutative algebra.
Throughout, $K$ is a field, $S = K[x_1,\dots,x_n]$ is the polynomial ring in $n$ variables over $K$ with its standard grading,  $\mathfrak{m} = (x_1,\dots,x_n)$ is the graded maximal ideal of $S$, and $G$ is a finite simple graph on $[n]$ with $E(G)\neq \emptyset$.

\subsection{Local cohomology, depth, and regularity}

Let $M$ be a finitely generated graded $S$-module. For $i \geq 0$, we denote by $H^i_{\mathfrak{m}}(M)$ the $i$-th \emph{local cohomology module} of $M$ with support in $\mathfrak{m}$. Local cohomology gives cohomological interpretations of the two central invariants of this paper:
 \[
 \operatorname{depth}(M) = \min\{i : H^i_{\mathfrak{m}}(M) \neq 0\}, \qquad
 \operatorname{reg}(M) = \max\{ i+j : H^i_{\mathfrak{m}}(M)_j \neq 0\}.
 \]
 Equivalently, $\operatorname{depth}(M)$ is the length of a maximal $M$-regular sequence in $\mathfrak{m}$, and the \emph{Castelnuovo--Mumford regularity} $\operatorname{reg}(M)$   can also be computed from a minimal graded free resolution of $M$,
 \[
 \operatorname{reg}(M) = \max\{ j - i : \beta_{i,j}(M) \neq 0\},
 \]
 where $\beta_{i,j}(M) = \dim_K \operatorname{Tor}^S_i(M,K)_j$ are the {\em graded Betti numbers} of $M$.  

\subsection{Stanley–Reisner rings}

A \emph{simplicial complex} $\Delta$ on $[n]=\{1,\ldots,n\}$ is a collection of subsets of $[n]$ closed under taking subsets, i.e.\ $F \in \Delta$ and $F' \subseteq F$ imply $F' \in \Delta$. Elements of $\Delta$ are called \emph{faces}, and maximal faces (under inclusion) are \emph{facets}.
For a face $F \in \Delta$, the \emph{link} of $F$ in $\Delta$ is the simplicial complex
$
 \operatorname{link}_{\Delta} F = \big\{\, G \subseteq [n]\setminus F \; : \; G \cup F \in \Delta \,\big\}.
$
 The \emph{Alexander dual} of $\Delta$ is the simplicial complex
$ \Delta^{\vee} = \big\{\, F \subseteq [n] \; : \; [n]\setminus F \notin \Delta \,\big\}$. The complex $\Delta$ is call {\em connected}, if $\Delta$ cannot be written as a disjoint union $\Delta = \Delta_1 \sqcup \Delta_2$ of two nonempty subcomplexes on disjoint vertex sets. The $i$th-{\em skeleton} of $\Delta$ is the simplicial complex $\Delta^{(i)}=\{F\in \Delta:\, |F|\leq i+1\}$. 
To a simplicial complex $\Delta$ on $[n]$ one associates its \emph{Stanley-Reisner ideal}
 \[
 I_\Delta = \big(x_{i_1}\cdots x_{i_s} : \{i_1,\dots,i_s\} \notin \Delta \big) \subseteq S,
 \]
generated by the monomials supported on the minimal non-faces of $\Delta$. The ring $K[\Delta]=S/I_\Delta$ is called the {\em Stanley–Reisner ring} of $\Delta$. The inverse assignment sends a squarefree monomial ideal $I$ to the \emph{Stanley--Reisner simplicial complex} 
$\Delta(I) = \big\{ F \subseteq [n] \; : \; {\bf x}_F \notin I \big\},$
where ${\bf x}_F = \prod_{i \in F} x_i$. 

For a subset $W \subseteq [n]$, the \emph{restriction} of $\Delta$ to $W$ is the subcomplex $\Delta_W = \{F \in \Delta : F \subseteq W\}$ on the vertex set $W$. Hochster's formula computes the graded Betti numbers of $I_\Delta$ (equivalently, of $S/I_\Delta$) in terms of the reduced simplicial homology of the restrictions of $\Delta$.
 \begin{thm}[Hochster's formula {\normalfont\cite{H1977}}]
  \label{thm:hochster}
 	Let $\Delta$ be a simplicial complex on $[n]$. Then for all $i,j$,
 	\[
 	\beta_{i,j}(S/I_\Delta) = \sum_{W \subseteq [n],\, |W| = j} \dim_K \widetilde{H}_{j-i-1}(\Delta_W; K).
 	\]
 \end{thm}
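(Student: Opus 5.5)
Hochster's formula is classical and is quoted here from \cite{H1977}, so rather than reprove it I would recall the standard argument: compute $\Tor$ in the other variable. The plan is to compute $\Tor_i^S(S/I_\Delta,K)$ by resolving $K$ instead of $S/I_\Delta$, then identify each multigraded strand of the result with a simplicial cochain complex.

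First, I would tensor the Koszul complex $\mathbb K(x_1,\ldots,x_n;S)$, a free resolution of $K$, with $S/I_\Delta$. This gives $\Tor_i^S(S/I_\Delta,K)=H_i(\mathbb K\otimes S/I_\Delta)$, and everything is $\mathbb Z^n$-graded. Since the Koszul complex is squarefree, only squarefree multidegrees ${\bf a}={\bf x}_W$ with $W\subseteq[n]$ contribute. This reduces the computation to a single multidegree $W$ at a time.

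Second, in multidegree $W$, a basis of the $i$-th term is given by the elements $e_F\otimes {\bf x}_{W\setminus F}$ with $F\subseteq W$, $|F|=i$, and ${\bf x}_{W\setminus F}\notin I_\Delta$, that is, $W\setminus F\in\Delta_W$. Setting $G=W\setminus F$, the Koszul differential becomes, up to sign, the simplicial coboundary on $\Delta_W$. So this strand is the augmented cochain complex of $\Delta_W$, reindexed so that a face $G$ of dimension $|W|-i-1$ sits in homological degree $i$. Hence
\[
\Tor_i(S/I_\Delta,K)_W\cong \widetilde H^{|W|-i-1}(\Delta_W;K)\cong \widetilde H_{|W|-i-1}(\Delta_W;K),
\]
where the last step uses that $K$ is a field. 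Summing over all $W$ with $|W|=j$ gives the stated formula.

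The only delicate step is the second one: the sign bookkeeping needed to check that the Koszul differential really matches the simplicial coboundary, and the empty-face convention that produces the augmentation (reduced cohomology). I would handle this by fixing the standard sign $(-1)^{\#\{l\in F: l<m\}}$ on both sides.
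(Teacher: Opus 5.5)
The paper does not prove this statement; it only quotes it from Hochster. Your outline is the standard Koszul-complex proof, and your identification of the strand in degree $W$ with the augmented cochain complex of $\Delta_W$ (faces of dimension $|W|-i-1$ in homological degree $i$) is correct. There is, however, a gap in the reduction to squarefree degrees.

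That reduction is genuinely needed, because $\beta_{i,j}=\sum_{|\mathbf a|=j}\beta_{i,\mathbf a}$ runs over all $\mathbf a\in\mathbb N^n$, while the formula sums only over $0/1$ vectors. Your reason, that the Koszul complex is squarefree, does not give it. The complex $\mathbb K\otimes_S S/I_\Delta$ has nonzero components in non-squarefree degrees too. The same Koszul complex tensored with $S/(x_1^2)$ has homology in degree $2\mathbf e_1$.

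What makes the reduction work is that $I_\Delta$ is squarefree. Suppose $a_j\ge 2$. The degree-$\mathbf a$ strand has basis $e_F\otimes\mathbf x^{\mathbf a-\mathbf 1_F}$, where $F\subseteq\supp\mathbf a$ satisfies $\supp(\mathbf a-\mathbf 1_F)\in\Delta$ and $\mathbf 1_F$ is the indicator vector of $F$. Since $a_j\ge 2$, the vectors $\mathbf a-\mathbf 1_F$ and $\mathbf a-\mathbf 1_{F\cup\{j\}}$ have the same support. So this family of $F$ is stable under adding or removing $j$. Hence $h(e_F\otimes\mathbf x^{\mathbf a-\mathbf 1_F})=e_j\wedge e_F\otimes\mathbf x^{\mathbf a-\mathbf 1_{F\cup\{j\}}}$ is well defined on the strand, and one checks $\partial h+h\partial=\mathrm{id}$. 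The strand is therefore exact.

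Alternatively, balance $\Tor$ and use the Taylor resolution of $S/I_\Delta$. Its shifts are least common multiples of squarefree monomials, so $\Tor_i(S/I_\Delta,K)_{\mathbf a}=0$ unless $\mathbf a$ is squarefree.

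The sign step, which you flagged as the delicate one, is routine but not quite as you describe. Put $G=W\setminus F$ and $m\in F$. The Koszul sign $(-1)^{\#\{l\in F:\,l<m\}}$ and the coboundary sign $(-1)^{\#\{l\in G:\,l<m\}}$ differ by $(-1)^{\#\{l\in W:\,l<m\}}$. This depends on $m$, so the signs do not match term by term. You get an isomorphism of complexes after multiplying the basis element indexed by $G$ by $(-1)^{\sum_{p\in G}\#\{l\in W:\,l<p\}}$. With these two repairs your argument is complete.
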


For $\mathbf{a} = (a_1,\dots,a_n) \in \mathbb{Z}^n$, we set ${\textbf x}^{\mathbf{a}} = x_1^{a_1}\cdots x_n^{a_n}$,
$\operatorname{supp}_+(\mathbf{a}) = \{ i : a_i > 0\}$, and $\operatorname{supp}_-(\mathbf{a}) = \{ i : a_i < 0\}$.
 For a subset $F \subseteq [n]$, let $S_F = S[x_j^{-1} : j \in F]$. The simplicial complex $\Delta_{\mathbf{a}}(I)$ is defined as
 \[
 \Delta_{\mathbf{a}}(I) = \big\{\, F \setminus \operatorname{supp}_-(\mathbf{a}) \;:\; \operatorname{supp}_-(\mathbf{a}) \subseteq F \subseteq [n], \ {\textbf x}^{\mathbf{a}} \notin I S_F \,\big\}.
 \]

Let $I \subseteq S$ be a monomial ideal. Since $S/I$ carries a natural $\mathbb{N}^n$-graded structure inherited from $S$, each local cohomology module $H^i_{\mathfrak{m}}(S/I)$ inherits a $\mathbb{Z}^n$-graded structure. We write $H^i_{\mathfrak{m}}(S/I)_{\mathbf{a}}$ for its graded component in multidegree $\mathbf{a} \in \mathbb{Z}^n$.
In order to compute local cohomology modules of monomial ideals in $S$, one uses a generalized version of Hochster’s formula given by Takayama~\cite{t2005}. 

\begin{thm}[{\normalfont\cite[Theorem 1]{t2005}}]
\label{thm:Takayama}
Let $I\subset S$ be a monomial ideal. Then for every $\mathbf{a} \in \mathbb{Z}^n$ and every $i \geq 0$,
\[
H^i_{\mathfrak{m}}(S/I)_{\mathbf{a}}\cong	\widetilde{H}^{i - |\supp_-({\mathbf{a}})| - 1}\big(\Delta_{\mathbf{a}}(I);K\big).
\]
\end{thm}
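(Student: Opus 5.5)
The statement is Takayama's formula (Theorem~\ref{thm:Takayama}). I would prove it by computing $H^i_{\mathfrak m}(S/I)$ with the Čech complex on $x_1,\dots,x_n$, restricting to one multidegree at a time, and identifying the resulting strand with the augmented simplicial cochain complex of $\Delta_{\mathbf a}(I)$, up to a shift.

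Recall that $H^i_{\mathfrak m}(S/I)$ is the $i$-th cohomology of
\[
0\to S/I\to \bigoplus_{|F|=1}(S/I)_{{\bf x}_F}\to\cdots\to\bigoplus_{|F|=i}(S/I)_{{\bf x}_F}\to\cdots\to (S/I)_{{\bf x}_{[n]}}\to 0,
\]
where the differential is the signed sum of the natural localization maps $(S/I)_{{\bf x}_F}\to(S/I)_{{\bf x}_{F\cup\{j\}}}$, with sign $(-1)^{\#\{l\in F: l<j\}}$. All maps are $\mathbb Z^n$-graded, so the complex splits into strands, one for each multidegree $\mathbf a$, and it suffices to compute each strand.

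\emph{Step 1.} Note that $(S/I)_{{\bf x}_F}=S_F/IS_F$. Since $I$ is monomial, this module is $\mathbb Z^n$-graded with every component of dimension at most $1$. Its $\mathbf a$-component is nonzero exactly when two conditions hold:
\begin{itemize}
\item ${\bf x}^{\mathbf a}\in S_F$, which means $\supp_-(\mathbf a)\subseteq F$;
\item ${\bf x}^{\mathbf a}\notin IS_F$.
\end{itemize}
So the $\mathbf a$-strand in cohomological degree $i$ has a basis indexed by
\[
\mathcal F_{\mathbf a}=\{F:\ \supp_-(\mathbf a)\subseteq F,\ {\bf x}^{\mathbf a}\notin IS_F\},\qquad |F|=i.
\]

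\emph{Step 2.} Check that $\mathcal F_{\mathbf a}$ is closed under passing to subsets $F'$ with $\supp_-(\mathbf a)\subseteq F'\subseteq F$. This holds because $IS_{F'}\subseteq IS_F$, so ${\bf x}^{\mathbf a}\notin IS_F$ implies ${\bf x}^{\mathbf a}\notin IS_{F'}$. Hence $\Delta_{\mathbf a}(I)=\{F\setminus\supp_-(\mathbf a): F\in\mathcal F_{\mathbf a}\}$ is a simplicial complex on $[n]\setminus\supp_-(\mathbf a)$. The bijection $F\mapsto F\setminus\supp_-(\mathbf a)$ sends $\supp_-(\mathbf a)$ itself to the empty face, and sends a set $F$ with $|F|=i$ to a face of dimension $i-|\supp_-(\mathbf a)|-1$.

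\emph{Step 3.} Identify the differentials. The localization map $(S/I)_{{\bf x}_F}\to(S/I)_{{\bf x}_{F\cup\{j\}}}$ in degree $\mathbf a$ sends the basis element ${\bf x}^{\mathbf a}$ to ${\bf x}^{\mathbf a}$. Its target is zero exactly when $F\cup\{j\}\notin\mathcal F_{\mathbf a}$. So, up to signs, the $\mathbf a$-strand is the augmented cochain complex $\widetilde C^{\bullet}(\Delta_{\mathbf a}(I);K)$, shifted by $|\supp_-(\mathbf a)|+1$; the empty face sits in degree $-1$, which accounts for reduced cohomology. Taking cohomology then gives the formula. When $\mathcal F_{\mathbf a}=\emptyset$ both sides vanish, consistent with the void complex having zero reduced cohomology.

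The main obstacle is the sign bookkeeping in Step 3. The Čech sign $(-1)^{\#\{l\in F:l<j\}}$ differs from the simplicial coboundary sign $(-1)^{\#\{l\in F\setminus\supp_-(\mathbf a): l<j\}}$ by the factor $(-1)^{\#\{l\in\supp_-(\mathbf a):l<j\}}$. I would rescale each basis element $e_F$ by $\varepsilon(F)=\prod_{j\in F\setminus\supp_-(\mathbf a)}(-1)^{\#\{l\in\supp_-(\mathbf a):\,l<j\}}$. This gives an isomorphism of complexes, which preserves cohomology.
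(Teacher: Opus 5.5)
Your proof is correct. The $\mathbf a$-strand of the \v{C}ech complex of $S/I$ is exactly the augmented cochain complex of $\Delta_{\mathbf a}(I)$, shifted by $|\supp_-(\mathbf a)|+1$. Your rescaling gives a chain isomorphism because $\varepsilon(F\cup\{j\})=\varepsilon(F)\,(-1)^{\#\{l\in\supp_-(\mathbf a):\,l<j\}}$ for $j\notin F$, and you handle the void case correctly.

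The paper gives no proof of its own here; it simply quotes Takayama, whose result rests on the same \v{C}ech-complex computation. Because the paper uses $\Delta_{\mathbf a}(I)$ in the form ${\bf x}^{\mathbf a}\notin IS_F$, the side conditions in Takayama's original formulation are absorbed automatically: when they fail, $\Delta_{\mathbf a}(I)$ is void or a cone. That is why your strand-by-strand identification needs no case distinction.
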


For a hypergraph $\mathcal{H}$ with the vertex set $V(\mathcal{H})$ and the edge set $E(\mathcal{H})$, the {\em independence complex} of $\mathcal{H}$ is the simplicial complex $\Delta(\mathcal{H})=\{F\subseteq V(\mathcal{H}):\; e\nsubseteq F, \textit{ for any } e\in E(\mathcal{H})\}$, and the {\em edge ideal} of $\mathcal{H}$ is defined as $I(\mathcal{H})=({\bf x}_e: \, e\in E(\mathcal{H}))$. It is easily seen that $\Delta_{I(\mathcal{H})}=\Delta(\mathcal{H})$.

For a monomial ideal $I$, we denote by $\mathcal{G}(I)$ the set of minimal monomial generators of $I$. Moreover, for any homogeneous ideal $I$ the {\em initial degree} of $I$, denoted by $\indeg I$, is the minimum integer $d$ such that $I_d\neq 0$.
  
\subsection{Symbolic powers and complementary edge ideals} 
 Let $I \subseteq S$ be an ideal, and let $\operatorname{Ass}(I)$ denote the set of its associated primes. For $k\in \mathbb{N}$, the $k$-th \emph{symbolic power} of $I$ is defined to be
 	\[
 	I^{(k)} = \bigcap_{P \in \operatorname{Ass}(I)} \big(I^k S_P \cap S\big).
 	\]

For a squarefree monomial ideal $I$ with the set of minimal prime ideals $\operatorname{Min}(I) = \{P_1,\dots,P_r\}$, the $k$-th symbolic powers of $I$ admits the explicit description $I^{(k)} = P_1^{k} \cap \cdots \cap P_r^{k}$.

The following lemma gives a description of the complex $\Delta_{\bf a}(I_{\Delta}^{(k)})$, and allows us to investigate $\reg S/I_{\Delta}^{(k)}$ via Takayama’s formula. 

\begin{lemma}[{\normalfont\cite[Lemma 1.3]{mt2011}}]
\label{deg cpx}
Let $\Delta$ be a simplicial complex on $[n]$, let $k\geq1$, and let
${\bf a}\in\mathbb N^n$. Then
$$\Delta_{\bf a}(I_{\Delta}^{(k)})=\langle F\in\mathcal F(\Delta):\sum_{i\notin F}a_i\leq k-1\rangle.$$
\end{lemma}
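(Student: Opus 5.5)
This is a restatement of \cite[Lemma 1.3]{mt2011}, so the paper itself simply cites it. A direct proof would go as follows.

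Write $I^{(k)}=\bigcap_{F\in\mathcal F(\Delta)}P_F^{k}$, where $P_F=(x_i:i\notin F)$. This uses that the minimal primes of $I_\Delta$ are exactly the $P_F$ for the facets $F$ of $\Delta$. Since ${\bf a}\in\mathbb N^n$, we have $\supp_-({\bf a})=\emptyset$. Hence
$$\Delta_{\bf a}(I^{(k)})=\{G\subseteq[n]:{\bf x}^{\bf a}\notin I^{(k)}S_G\}.$$
The first step is to commute localization with the finite intersection:
$$I^{(k)}S_G=\bigcap_F P_F^kS_G.$$
This holds because localization at a multiplicative set is flat, or directly for monomial ideals.

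The second step computes each factor. If $G\not\subseteq F$, then some inverted variable $x_j$ with $j\notin F$ lies in $P_F$. In that case $P_F^kS_G=S_G$, and this factor imposes no condition. If $G\subseteq F$, then inverting the variables $x_j$ with $j\in G$ does not touch the generators of $P_F$. Thus $P_F^kS_G\cap S=P_F^k$, and a monomial ${\bf x}^{\bf a}$ with ${\bf a}\in\mathbb N^n$ lies in $P_F^kS_G$ if and only if its degree in the variables outside $F$ is at least $k$, that is, $\sum_{i\notin F}a_i\ge k$. To make this rigorous, note that multiplying by monomials in the $x_j$ with $j\in G\subseteq F$ does not change the degree in the variables outside $F$.

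Combining these, ${\bf x}^{\bf a}\notin I^{(k)}S_G$ if and only if there is a facet $F\supseteq G$ with $\sum_{i\notin F}a_i\le k-1$. This says exactly that $G$ lies in the complex generated by such facets, which is the claim.

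The only delicate point is the membership criterion in the localized prime power. It reduces to the observation that $P_F^k$ is the ideal of monomials of degree at least $k$ in the variables outside $F$, which is stable under inverting variables in $F$.
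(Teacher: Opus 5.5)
Your proof is correct. The paper gives no proof of its own and only cites \emph{[Minh--Trung, Lemma 1.3]}; your argument is the standard one behind that citation. You write $I_\Delta^{(k)}=\bigcap_{F\in\mathcal F(\Delta)}P_F^k$, note that $P_F^kS_G=S_G$ unless $G\subseteq F$, and, when $G\subseteq F$, observe that membership of ${\bf x}^{\bf a}$ in $P_F^kS_G$ is governed by $\sum_{i\notin F}a_i\ge k$, since multiplying by monomials in the variables $x_j$ with $j\in G$ does not change the degree outside $F$.
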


Let $G$ be a finite simple graph with vertex set $V(G) = [n] = \{1,\dots,n\}$ and edge set $E(G)$.  We write $G^c$ for the complement of $G$, i.e.\ the graph on $[n]$ with $\{i,j\} \in E(G^c)$ if and only if $\{i,j\} \notin E(G)$.
The {\em clique complex} of $G$, denoted by  $\operatorname{Cl}(G)$ is the simplicial complex whose faces are cliques (complete subgraphs) of $G$. The {\em edge ideal} of $G$ is defined as $I(G)=(x_ix_j:\, \{i,j\}\in E(G))$, and the {\em cover ideal} of $G$ is 
defined as the Alexander dual of $I(G)$ that is $J(G)=\bigcap_{\{i,j\}\in E(G)} (x_i,x_j)$. 

For a graph $G$ on the vertex set $[n]$ with $E(G)\neq \emptyset$, the \emph{complementary edge ideal} of $G$ is the squarefree monomial ideal
$$I_c(G) = \big({\textbf x}_{[n]\setminus\{i,j\}} : \{i,j\} \in E(G)\big) \subseteq S.$$
 
Let $\mathcal{T}(G)$ denote the set of all triangles ($K_3$-subgraphs) of $G$. The following result determines the minimal primary decomposition of a complementary edge ideal. 

\begin{thm}[{\normalfont\cite[Theorem 2.1]{fm2025} or \cite[Theorem 1.1]{hqm2026}}]\label{primay decomp}
\label{min prime}
Let $G$ be a finite simple graph without isolated vertices. Then 
$$I_{c}(G)=(\bigcap_{e\in E(G^{c})}P_{e})\cap(\bigcap_{T\in\mathcal{T}(G)}P_{T}),$$
where $P_C=(x_i:\, i\in C)$ for any $C\subseteq [n]$. 
\end{thm}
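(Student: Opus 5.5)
This result is the primary decomposition quoted from \cite{fm2025,hqm2026}, and I will prove it by showing both containments at the level of squarefree monomials. Write $u_e={\bf x}_{[n]\setminus e}$ for an edge $e\in E(G)$, and let $J$ denote the right-hand side. Both $I_c(G)$ and $J$ are squarefree monomial ideals: $J$ is an intersection of monomial primes. It therefore suffices to describe the minimal primes of $I_c(G)$. Equivalently, I will find the minimal subsets $C\subseteq[n]$ that meet every complement $[n]\setminus e$ with $e\in E(G)$.

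\emph{Containment $I_c(G)\subseteq J$.} Fix $e=\{i,j\}\in E(G)$. For a non-edge $f\in E(G^c)$ we have $f\neq e$, so $f\not\subseteq e$. Hence some vertex of $f$ lies in $[n]\setminus e$, and $u_e\in P_f$. For a triangle $T$ we have $|T|=3>2=|e|$, so again $T\not\subseteq e$ and $u_e\in P_T$.

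\emph{Containment $J\subseteq I_c(G)$.} A subset $C$ gives a prime $P_C\supseteq I_c(G)$ exactly when $C\not\subseteq e$ for every edge $e$. I will show that every such $C$ contains some non-edge or some triangle; then every minimal prime of $I_c(G)$ is among the listed primes, which gives $J\subseteq I_c(G)$. Take such a $C$.
\begin{itemize}
\item Suppose $|C|\geq 3$. If $C$ contains two vertices not adjacent in $G$, it contains a non-edge. Otherwise any three vertices of $C$ form a triangle of $G$.
\item Suppose $|C|=2$. Then $C$ is not an edge, so it is itself a non-edge.
\item Suppose $|C|=1$, say $C=\{v\}$. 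Since $G$ has no isolated vertices, $v$ lies on some edge $e$, and then $C\subseteq e$. So this case cannot occur.
\item Suppose $C=\emptyset$. Then $C\subseteq e$ for any edge (and $E(G)\neq\emptyset$), so this case cannot occur either.
\end{itemize}

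Thus the minimal primes of $I_c(G)$ lie among the $P_f$ and $P_T$. Since $I_c(G)$ is radical, it equals the intersection of its minimal primes, and so $I_c(G)\supseteq J$. Together with the first containment this gives $I_c(G)=J$.

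Irredundance is not required by the statement, which claims only equality. If it is wanted, note that the non-edges and the triangles form an antichain of subsets: no non-edge lies inside a triangle, since all three pairs of a triangle are edges of $G$. This shows the displayed decomposition is minimal.

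The only delicate step is the case analysis on $|C|$, specifically excluding $|C|=1$. That is exactly where the hypothesis that $G$ has no isolated vertices is used.
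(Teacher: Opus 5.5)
Your proof is correct and complete. The reduction to monomial primes $P_C$, the observation that $P_C\supseteq I_c(G)$ exactly when $C$ is contained in no edge of $G$, and the case analysis on $|C|$ establish both containments. The hypothesis of no isolated vertices is what excludes $|C|=1$. The passage from ``every such $C$ contains a non-edge or a triangle $D$'' to $J\subseteq I_c(G)$ is immediate, since $J\subseteq P_D\subseteq P_C$ for every prime $P_C\supseteq I_c(G)$.

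The paper does not prove this statement itself; it imports it from \cite{fm2025,hqm2026}. So there is no in-paper proof to compare against. Your argument is the standard direct one. It amounts to the facet description of $\Delta(I_c(G))$ (facets $[n]\setminus C$ with $C$ a non-edge or a triangle) that the paper uses later, e.g.\ in the proof of Proposition~\ref{prop-lowerbound}.
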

Notice that if $G$ has isolated vertices, then $I_c(G)={\bf x}_A I_c(H)$, where $A$ is the set of isolated vertices of $G$ and $H=G-A$.

Theorem~\ref{min prime} yields that for a graph $G$ without isolated vertices

\begin{equation}\label{sym-comp}
I_{c}(G)^{(k)}=(\bigcap_{e\in E(G^{c})}P_{e}^{k})\cap(\bigcap_{T\in\mathcal{T}(G)}P_{T}^{k}).
\end{equation}

\subsection{Serre depth}
Let $R$ be a Noetherian ring, $M$ a finitely generated $R$-module, and $r \geq 0$ an integer. We say $M$ satisfies \emph{Serre's condition $(S_r)$} if
$\operatorname{depth} M_{\mathfrak{p}}  \geq \min\{r, \dim M_{\mathfrak{p}}\}$ for every prime ideal $\mathfrak{p} \in \operatorname{Supp}(M)$.

Let $S = K[x_1,\dots,x_n]$ be the polynomial ring, and let $M$ be a finitely generated graded $S$-module. For $j \geq 0$, set
$K_M^j = \operatorname{Hom}_K\big(H^j_{\mathfrak{m}}(M), K\big)$,
the $K$-dual of the $j$-th local cohomology module of $M$, where we set $K_{M}^{i}=0$ for all $i>n$. We use the convention $\dim 0=-\infty$. 
The \emph{Serre depth}  for $(S_r)$ of $M$ is defined as
	\[
	S_r\text{-}\!\operatorname{depth} M = \min\big\{\, j \; : \; \dim K_M^j \geq j - r + 1 \,\big\}.
	\]    
 

\section{regularity of symbolic powers of $I_{c}(G)$}\label{sec3}

In this section, we study the Castelnuovo--Mumford regularity of $I_{c}(G)^{(k)}$ and $I_{c}(G)^{(k)}/I_c(G)^k$ and give a combinatorial characterization of the linear resolution property of $I_{c}(G)^{(k)}$. First, we describe multi-graded local cohomology modules of $I_{c}(G)^{(k)}$ in terms of the reduced homology and multi-graded Betti numbers of an associated hypergraph. 

\smallskip

To a graph $G$ on the vertex set $[n]$ without isolated vertices,  an integer $k\geq1$ and a vector ${\bf a}\in\mathbb{Z}^{n}$, we associate a hypergraph $\mathcal{H}_{\bf a}^{(k)}(G)$, as follows. Set $U=[n]\setminus\supp_{-}{\bf a}$. Define the hypergraph $\mathcal{H}_{\bf a}^{(k)}(G)$ on the vertex set $U$, whose edge set is
$$E(\mathcal{H}_{\bf a}^{(k)}(G))=\{\{i,j\}\notin E(G)\,\,: a_{i}+a_{j}\leq k-1\}\cup\{\{i,j,\ell\}\in\mathcal{T}(G)\,\,: a_{i}+a_{j}+a_{\ell}\leq k-1\}.$$

Under this notation, we have the following isomorphism, using Alexander duality theorem (see, for example, \cite[Theorem 1.1]{bt2009}) and Takayama's formula (\cite[Theorem 1]{t2005}): 

\begin{lemma}\label{hyperlemma}
With the notation above, we have 
\[
H_{\mathfrak{m}}^{i}(S/I_{c}(G)^{(k)})_{\bf a}\cong
\begin{cases}
\widetilde{H}^{n-i-2}(\Delta(\mathcal{H}_{\bf a}^{(k)}(G));K), & \text{if } U\neq\emptyset, \\
0, & \text{ otherwise}.
\end{cases}
\]

\noindent In particular, if $U\neq\emptyset$, then we have $$\dim_{K}H_{\mathfrak{m}}^{i}(S/I_{c}(G)^{(k)})_{\bf a}=\beta_{i-n+|U|,\, {\bf x}_{U}}(I(\mathcal{H}_{\bf a}^{(k)}(G))).$$
\end{lemma}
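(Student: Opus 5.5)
Let $I=I_c(G)$ with $G$ having no isolated vertices, and let $\Delta=\Delta(I)$. By Theorem~\ref{min prime}, the facets of $\Delta$ are the complements $[n]\setminus e$ for $e\in E(G^c)$ and $[n]\setminus T$ for $T\in\mathcal T(G)$. The plan is to apply Takayama's formula (Theorem~\ref{thm:Takayama}) to $I^{(k)}$ and identify $\Delta_{\bf a}(I^{(k)})$ with the Alexander dual of $\Delta(\mathcal H_{\bf a}^{(k)}(G))$ inside the simplex on $U$. The isomorphism then follows from combinatorial Alexander duality.

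\textbf{Step 1: the degree complex.} Set $V=\supp_-{\bf a}$. For a squarefree monomial ideal, the standard reduction (Takayama, or \cite{mt2011}) gives a description of $\Delta_{\bf a}(I^{(k)})$ generalising Lemma~\ref{deg cpx} to ${\bf a}\in\mathbb Z^n$. Its faces are the sets $F\setminus V$, where $F$ is a facet of $\Delta$ containing $V$ and satisfying $\sum_{i\notin F}a_i\le k-1$. The reduction works because localizing at $x_j$, $j\in V$, kills exactly the primes $P_C$ with $C\cap V\neq\emptyset$.

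Concretely, $I^{(k)}S_F$ is the intersection of $P_C^kS_F$ over those $C$ with $C\cap F=\emptyset$. Also ${\bf x}^{\bf a}\in P_C^k$ holds iff $\sum_{i\in C}a_i\ge k$, once the negative exponents lie outside $C$. So $\Delta_{\bf a}(I^{(k)})$ is the complex on $U$ generated by the sets $U\setminus C$, where $C\subseteq U$ ranges over the edges $e\in E(G^c)$ and triangles $T\in\mathcal T(G)$ with $\sum_{i\in C}a_i\le k-1$. These $C$ are exactly the edges of $\mathcal H=\mathcal H_{\bf a}^{(k)}(G)$.

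A face of $\Delta_{\bf a}(I^{(k)})$ is a set $W\subseteq U\setminus C$ for some such $C$. Equivalently, $U\setminus W$ contains an edge of $\mathcal H$, i.e. $U\setminus W\notin\Delta(\mathcal H)$. Hence
\[
\Delta_{\bf a}(I^{(k)})=\Delta(\mathcal H)^\vee \quad\text{(Alexander dual on the vertex set $U$)}.
\]

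\textbf{Step 2: the case $U=\emptyset$.} If $U=\emptyset$, every $F$ containing $V$ equals $[n]$, and $[n]$ is not a face of $\Delta$ since $E(G)\neq\emptyset$. So $\Delta_{\bf a}$ is the void complex and all its cohomology vanishes.

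\textbf{Step 3: combinatorial Alexander duality.} Combinatorial Alexander duality on $|U|$ vertices gives
\[
\widetilde H^{j}(\Delta^\vee)\cong\widetilde H_{|U|-j-3}(\Delta),
\]
and over a field homology and cohomology have equal dimension. With $j=i-|V|-1$ and $|U|=n-|V|$, we get $|U|-j-3=n-i-2$. Thus
\[
H^i_{\mathfrak m}(S/I^{(k)})_{\bf a}\cong \widetilde H^{n-i-2}(\Delta(\mathcal H)).
\]

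\textbf{Step 4: the Betti number identity.} Hochster's formula for $I(\mathcal H)$ at the squarefree degree ${\bf x}_U$ gives
\[
\beta_{p,{\bf x}_U}(I(\mathcal H))=\dim\widetilde H_{|U|-p-2}(\Delta(\mathcal H)).
\]
Setting $|U|-p-2=n-i-2$ yields $p=i-n+|U|$, as claimed.

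The main obstacle is the careful extension of Lemma~\ref{deg cpx} to vectors ${\bf a}$ with negative entries. One must also handle the degenerate cases where $\Delta(\mathcal H)$ is the full simplex or $\{\emptyset\}$, so that the duality conventions (void versus empty complex) match. I would verify that step directly from the definition of $\Delta_{\bf a}$.
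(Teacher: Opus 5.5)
Your proposal is correct and follows the paper's route. You identify $\Delta_{\bf a}(I_c(G)^{(k)})$ with the Alexander dual on $U$ of $\Delta(\mathcal{H}_{\bf a}^{(k)}(G))$, then combine Alexander duality, Takayama's formula and Hochster's formula, and your index bookkeeping matches the paper's. The one difference is how the degree complex is described: the paper cites the reduction $\Delta_{\bf a}(I^{(k)})=\Delta_{{\bf a}^+}(I^{(k)}_{\link(\supp_-{\bf a})})$ from \cite{kmt2025} together with Lemma~\ref{deg cpx}, whereas you verify the description directly from the localization definition and also justify the $U=\emptyset$ case, which the paper leaves implicit.
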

\begin{proof}
From the proof of \cite[Theorem 3.2]{kmt2025}, we have $$\Delta_{\bf a}(I_{c}(G)^{(k)})=\Delta_{\bf a^{+}}(I_{\link_{\Delta(I_{c}(G))}(\supp_{-}{\bf a})}^{(k)}).$$Hence, by Lemma \ref{deg cpx}, we see that $\Delta_{\bf a}(I_{c}(G)^{(k)})=\langle U\setminus C\,\,: C\in E(\mathcal{H}_{\bf a}^{(k)}(G))\rangle=(\Delta(\mathcal{H}_{\bf a}^{(k)}(G)))^{\vee}$. By the Alexander duality theorem, we have $$\widetilde{H}_{r}(\Delta_{\bf a}(I_{c}(G)^{(k)});K)\cong\widetilde{H}^{|U|-r-3}(\Delta(\mathcal{H}_{\bf a}^{(k)}(G);K))\mbox{ for all }r.$$ Thus from Takayama's formula, we obtain 
$$H_{\mathfrak{m}}^{i}(S/I_{c}(G)^{(k)})_{\bf a}\cong\widetilde{H}_{i-|\supp_{-}{\bf a}|-1}(\Delta_{\bf a}(I_{c}(G)^{(k)});K)\cong\widetilde{H}^{n-i-2}(\Delta(\mathcal{H}_{\bf a}^{(k)}(G);K))\mbox{ for all }i.$$The latter part follows from
Hochster's formula.
\end{proof}

For a graph $G$, we set $c(G)$ to be the number of connected components of $G$ which are not isolated vertices, and $$sc(G)=\min\{|V(C)|\,\,: C\text{ is a connected component of $G$ with at least two vertices}\}.$$ In the following we  determine the regularity of symbolic powers of complementary edge ideals, explicitly.  
\begin{thm}\label{reg-equality}
Let $G$ be a graph on the vertex set $[n]$. For each $k\geq1$, we have 
\[
\reg I_{c}(G)^{(k)}=
\begin{cases}
(n-2)k, & \text{if } c(G)=1, \\
\max\{(n-2)k, (n-sc(G))k+sc(G)-1\}, & \text{ if } c(G)>1.
\end{cases}
\]
\end{thm}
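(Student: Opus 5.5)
We first reduce to graphs without isolated vertices. If $A$ is the set of isolated vertices of $G$ and $H=G-A$, then $I_c(G)={\bf x}_AI_c(H)$. Since ${\bf x}_A$ and $I_c(H)$ involve disjoint sets of variables, the minimal primes of $I_c(G)$ are the $(x_a)$ with $a\in A$ together with the minimal primes of $I_c(H)$. Hence $I_c(G)^{(k)}={\bf x}_A^kI_c(H)^{(k)}$ and $\reg I_c(G)^{(k)}=k|A|+\reg I_c(H)^{(k)}$. The invariants $c$ and $sc$ are the same for $G$ and $H$, and $|V(H)|=n-|A|$, so the formula for $H$ gives the formula for $G$. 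From now on $G$ has no isolated vertices. We use $\reg I^{(k)}=\reg S/I^{(k)}+1$ and
$$\reg S/I^{(k)}=\max\{i+|{\bf a}| : H^i_{\mathfrak m}(S/I^{(k)})_{\bf a}\neq0\},$$
where $|{\bf a}|=\sum_j a_j$. Lemma~\ref{hyperlemma} computes the relevant graded pieces from the independence complex $\Delta(\mathcal H^{(k)}_{\bf a}(G))$ on $U=[n]\setminus\supp_-{\bf a}$.

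\emph{Lower bounds.} The monomial ${\bf x}_{[n]\setminus e}^k$ with $e\in E(G)$ lies in $I_c(G)^{(k)}$ by \eqref{sym-comp}, and every element of $I_c(G)^{(k)}$ has degree at least $(n-2)k$. Therefore $\indeg I_c(G)^{(k)}=(n-2)k$ and $\reg I_c(G)^{(k)}\ge (n-2)k$.

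Now let $c(G)>1$ and let $C$ be a component with $|V(C)|=s=sc(G)$. Take ${\bf a}\in\mathbb N^n$ with $a_j=0$ for $j\in V(C)$ and $a_j=k-1$ otherwise, so $U=[n]$. We read off the edges of $\mathcal H^{(k)}_{\bf a}(G)$.
\begin{enumerate}
\item Every pair $\{i,j\}$ with $i\in V(C)$ and $j\notin V(C)$ is a nonedge of $G$ with $a_i+a_j=k-1$, so it is an edge.
\item A pair of vertices outside $C$ that is a nonedge of $G$ has weight $2k-2$, and a triangle outside $C$ has weight $3k-3$. For $k\ge2$ neither satisfies the inequality, so neither is an edge.
\item Only nonedges and triangles inside $C$ are edges within $V(C)$. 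Since $C$ is a component, no triangle meets both $C$ and its complement.
\end{enumerate}
Hence $\Delta(\mathcal H^{(k)}_{\bf a}(G))$ is the disjoint union of a full simplex on $[n]\setminus V(C)$ and a nonempty complex on $V(C)$. Its reduced $\widetilde H^0$ is nonzero, which corresponds to $i=n-2$ in Lemma~\ref{hyperlemma}. This gives
$$\reg S/I^{(k)}\ge n-2+(n-s)(k-1), \qquad\text{so}\qquad \reg I^{(k)}\ge (n-s)k+s-1.$$
The case $k=1$ is immediate, since then this value is $n-1\le$ the known $\reg I_c(G)$.

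\emph{Upper bound.} This is the main obstacle. We must show that whenever $H^i_{\mathfrak m}(S/I^{(k)})_{\bf a}\ne0$,
$$i+|{\bf a}|\le\max\{(n-2)k,\,(n-s)k+s-1\}-1,$$
and that $i+|{\bf a}|\le (n-2)k-1$ when $G$ is connected. We will use the following facts.
\begin{enumerate}
\item If some vertex of $U$ lies in no edge of $\mathcal H^{(k)}_{\bf a}(G)$, then $\Delta(\mathcal H^{(k)}_{\bf a}(G))$ is a cone and the graded piece vanishes. So we may assume every $j\in U$ lies in an edge.
\item Each such edge has weight at most $k-1$ and entries $a_j\ge0$ on $U$. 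Hence $a_j\le k-1$ for all $j\in U$, while the negative coordinates only decrease $|{\bf a}|$.
\item By Lemma~\ref{hyperlemma} and the Taylor resolution, nonvanishing means $\beta_{p,{\bf x}_U}(I(\mathcal H^{(k)}_{\bf a}(G)))\neq0$ with $p=i-n+|U|$. This forces $U$ to be covered by the edges, with a homology class supported in cohomological degree $n-i-2$.
\end{enumerate}

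The plan is to split according to the connectivity of $\Delta=\Delta(\mathcal H^{(k)}_{\bf a}(G))$. If $\Delta$ is disconnected ($i=n-2$), the two parts $X,Y$ of $U$ have every pair across them as an edge, hence a nonedge of $G$ with weight $\le k-1$. So $X$ and $Y$ are unions of components of $G$. The weight condition across the cut gives $\sum_U a_j\le (k-1)\cdot(\text{number of vertices not in a smallest side})$. This yields the second term of the maximum, and it is impossible when $c(G)=1$ and $U=[n]$. In higher cohomological degree we would bound $|{\bf a}|$ by choosing a minimal set of covering edges and summing their weights, trading each unit of homological degree against at most $k-1$ of weight so as to land below $(n-2)k-1$. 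I would first try induction on $|U|$ via the link/deletion exact sequence at a vertex of maximal $a_j$. I expect this bookkeeping, especially with triangle edges and with negative coordinates, to be the delicate part.
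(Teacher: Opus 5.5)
Your proposal has a genuine gap: the upper bound, which is the substance of the theorem, is not proved. You handle only the case where $\Delta(\mathcal H^{(k)}_{\bf a}(G))$ is disconnected. For $i<n-2$ you leave an unexecuted induction via link/deletion.

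The missing idea is already inside your fact (3), but you extract too little from it. A nonzero $\beta_{p,{\bf x}_U}(I(\mathcal H^{(k)}_{\bf a}(G)))$ forces, through the Taylor resolution, the existence of exactly $p+1$ edges $F_0,\dots,F_p$ with $F_0\cup\cdots\cup F_p=U$, not merely some cover of $U$.
\begin{itemize}
\item Since $a_j\ge 0$ on $U$ and each edge has weight at most $k-1$, this gives $\sum_{j\in U}a_j\le (p+1)(k-1)$.
\item Use $i=p+|\supp_-{\bf a}|$ together with $a_j\le -1$ for $j\in\supp_-{\bf a}$. You need this sharper bound, not just that negative coordinates ``decrease'' $|{\bf a}|$; with the weaker statement your bound is off by one when $U\neq[n]$. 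You then get $|{\bf a}|+i\le k(p+1)-1$.
\item Hochster's formula gives $p\le |U|-2$, because every singleton of $U$ is a face, so $\widetilde H_{-1}=0$.
\end{itemize}
Hence $p+1\le n-2$, and so $|{\bf a}|+i\le k(n-2)-1$, unless $U=[n]$ and $p=n-2$. That remaining case is exactly $\widetilde H^0\neq 0$ with $U=[n]$. There your cut argument gives $|{\bf a}|+i\le (n-sc(G))(k-1)+n-2$, and the case is impossible when $c(G)=1$. With this count there is no delicate bookkeeping for triangles or negative coordinates, and no induction is needed. Without it, your proposal does not establish the upper bound.

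There is also an error in your first lower bound. The claim that every element of $I_c(G)^{(k)}$ has degree at least $(n-2)k$ is false. If $G$ has no isolated vertices and $n\ge 5$, then ${\bf x}_{[n]}\in I_c(G)^{(2)}$ by \eqref{sym-comp}, and $n<2(n-2)$. The inequality $\reg I_c(G)^{(k)}\ge (n-2)k$ still holds, but argue it via generator degrees instead:
\begin{itemize}
\item ${\bf x}_{[n]\setminus e}^k$ is a \emph{minimal} generator of $I_c(G)^{(k)}$. Lowering the exponent of $x_\ell$, $\ell\notin e$, breaks the condition for a nonedge $\{u,\ell\}$ with $u\in e$, or else for the triangle $e\cup\{\ell\}$.
\item The regularity of an ideal is at least the degree of each of its minimal generators.
\end{itemize}
Your second lower bound via the vector ${\bf a}$ agrees with the paper's. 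The restriction to $k\ge 2$ there is unnecessary: disconnectedness uses only the cross pairs, which are hypergraph edges for $k=1$ as well.
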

\begin{proof}
 By \cite[Proposition 6.3]{fmpoly2025}, and the fact that 
$I_c(G)={\bf x}_AI_c(H)$, where $A$ is the set of isolated vertices of $G$ and $H=G-A$, we may assume that $G$ has no isolated vertices. Fix $k\geq1$ and let $I=I_{c}(G)$. 
Suppose that $H_{\mathfrak{m}}^{i}(S/I^{(k)})_{\bf a}\neq 0$. Set $U=[n]\setminus\supp_{-}{\bf a}$,  $p=i-n+|U|$ and $\mathcal{H}_{\bf a}^{(k)}=\mathcal{H}_{\bf a}^{(k)}(G)$. By Lemma \ref{hyperlemma}, we have $\beta_{p,{\bf x}_{U}}(I(\mathcal{H}_{\bf a}^{(k)}))\neq 0$. Consider the Taylor resolution of $I(\mathcal{H}_{\bf a}^{(k)})$ with respect to the generating set $\{{\bf x}_{F}\,\,: F\in E(\mathcal{H}_{\bf a}^{(k)})\}$ as defined in \cite{t1966}. Since $\beta_{p,{\bf x}_{U}}(I(\mathcal{H}_{\bf a}^{(k)}))\neq 0$, there exist edges $F_{0},\ldots, F_{p}\in E(\mathcal{H}_{\bf a}^{(k)})$ such that $F_{0}\cup\cdots\cup F_{p}=U$. By the definition of $E(\mathcal{H}_{\bf a}^{(k)})$, we have $\sum_{j\in F_{i}}a_{j}\leq k-1$, which implies $$\displaystyle\sum_{j\in U}a_{j}\leq\displaystyle\sum_{0\leq i\leq p}\displaystyle\sum_{j\in F_{i}}a_{j}\leq (p+1)(k-1).$$Since $[n]=U\cup\supp_{-}{\bf a}$ and $p=i-n+|U|$, we get $i=p+|\supp_{-}{\bf a}|$. Notice that $\displaystyle\sum_{j\in\supp_{-}{\bf a}}(a_{j}+1)\leq 0$ by the definition of $\supp_{-}{\bf a}$. 
Hence, we obtain

\begin{equation}\label{|a|+i}
|{\bf a}|+i=\displaystyle\sum_{1\leq j\leq n}a_{j}+i=\displaystyle\sum_{j\in U}a_{j}+\displaystyle\sum_{j\in\supp_{-}{\bf a}}a_{j}+p+|\supp_{-}{\bf a}|\leq\displaystyle\sum_{j\in U}a_{j}+p\leq k(p+1)-1. 
\end{equation}
On the other hand, from Hochster's formula, we have $$\beta_{p,{\bf x}_{U}}(I(\mathcal{H}_{\bf a}^{(k)}))=\dim_{K}\widetilde{H}_{|U|-p-2}(\Delta(\mathcal{H}_{\bf a}^{(k)});K).$$ So  from $\beta_{p,{\bf x}_{U}}(I(\mathcal{H}_{\bf a}))\neq 0$ we obtain $p\leq|U|-2$. 

\smallskip

{\bf Case 1.} Suppose that $c(G)=1$.Then by \cite[Corollary 3.2]{fmStanley-Reisner2025} and \cite[Theorem 1.2]{fmpoly2025} we have $\reg I^{(k)}\geq \reg I^k=k(n-2)$. 
So it is enough to prove $\reg I^{(k)}\leq k(n-2)$.  
We show that $p+1\leq n-2$. 
We have $p\leq|U|-2$, which implies that $p+1\leq |U|-1$. So if $U\neq [n]$, then $p+1\leq n-2$. Hence, we may assume that $U=[n]$. It is enough to consider the case $p=n-2$. Then, from Hochster's formula, we obtain $0\neq\beta_{n-2,{\bf x}_{[n]}}(I(\mathcal{H}_{\bf a}^{(k)}))=\dim_{K}\widetilde{H}_{n-(n-2)-2}(\Delta(\mathcal{H}_{\bf a}^{(k)});K)$, which implies $\Delta(\mathcal{H}_{\bf a}^{(k)})$ is disconnected. Now, since $\{u,v\}\in E(G)$ is a face of $\Delta(\mathcal{H}_{\bf a}^{(k)})$, by the assumption that $G$ is connected, $\Delta(\mathcal{H}_{\bf a}^{(k)})$ is connected, a contradiction. Hence, we get $p\neq n-2$, and thus $p+1\leq n-2$. This together with (\ref{|a|+i}) implies that
$$|{\bf a}|+i\leq k(p+1)-1\leq k(n-2)-1.$$
Thus, we have shown that if $H_{\mathfrak{m}}^{i}(S/I^{(k)})_{\bf a}\neq 0$, then we have $|{\bf a}|+i\leq k(n-2)-1$, which implies that $\reg(S/I^{(k)})\leq k(n-2)-1$, as required. 

\smallskip

{\bf Case 2.} Suppose that $c(G)>1$. We have $p\leq |U|-2$. If $U\neq[n]$, then we have $p+1\leq |U|-1\leq n-2$. So by (\ref{|a|+i}) we get $|{\bf a}|+i\leq k(p+1)-1\leq k(n-2)-1$, which implies that $\reg I^{(k)}\leq k(n-2)$.
Similarly, if $p\leq n-3$, then $p+1\leq n-2$ and we obtain the inequality  $\reg I^{(k)}\leq k(n-2)$. So we may assume that
$U=[n]$ and $p=n-2$. In this case, since $\beta_{n-2,{\bf x}_{[n]}}(I(\mathcal{H}_{\bf a}))\neq0$, 
by Lemma \ref{hyperlemma}, we get $\widetilde{H}_{0}(\Delta(\mathcal{H}_{\bf a}^{(k)});K)\neq0$. Hence $\Delta(\mathcal{H}_{\bf a}^{(k)})$ is disconnected. Notice that $G$ is a spanning subgraph of the 1-skeleton of $\Delta(\mathcal{H}_{\bf a}^{(k)})$. Take a connected component of $\Delta(\mathcal{H}_{\bf a}^{(k)})$ with the minimum number of vertices, and let $W$ be its vertex set. Then, $W$ is a union of vertex sets of some connected components of $G$.
Notice that $|W|\leq n-|W|$ and $sc(G)\leq |W|$. For any $u\in W$, $v\in [n]\setminus W$, we have $\{u,v\}\notin\Delta(\mathcal{H}_{{\bf a}}^{(k)})$, which means $\{u,v\}\in E(\mathcal{H}_{\bf a}^{(k)})$. Hence, $a_{u}+a_{v}\leq k-1$. Set $t=\max\{a_{u}\,\,: u\in W\}$. Then we have $a_{v}\leq (k-1)-t$ for each $v\in[n]\setminus W$. Hence, from $n-|W|\geq |W|$ and $t\geq0$, we get 
$$\displaystyle\sum_{1\leq j\leq n}a_{j}\leq |W|t+(n-|W|)(k-1-t)=(n-|W|)(k-1)-(n-2|W|)t\leq (n-|W|)(k-1).$$Therefore, since $i=p=n-2$ and $sc(G)\leq |W|$, we obtain $$|{\bf a}|+i\leq\ (n-sc(G))(k-1)+n-2=(n-sc(G))k+sc(G)-2,$$ which implies
$\reg I^{(k)}\leq (n-sc(G))k+sc(G)-1$. So we have shown that
$$\reg I^{(k)}\leq\max\{(n-2)k, (n-sc(G))k+sc(G)-1\}.$$ 

We finally prove the converse inequality. By \cite[Lemma 3.1]{SF} we have  $\reg I^{(k)}\geq\deg(I^{(k)})\geq k\deg(I)=(n-2)k$ for all $k$. Hence, it is enough to prove that $\reg I^{(k)}\geq (n-sc(G))k+sc(G)-1$. Take a connected component $C$ of $G$ such that $|V(C)|=sc(G)$. Define a vector ${\bf a}$ by $a_{j}=0$ if $j\in V(C)$ and $a_{j}=k-1$ otherwise. Then we have $U=[n]$. For any vertices $u\in V(C)$ and $v\in [n]\setminus V(C)$, we have $\{u,v\}\notin E(G)$. Moreover, since $a_{u}+a_{v}=0+(k-1)=k-1$, we have $\{u,v\}\in E(\mathcal{H}_{\bf a}^{(k)})$. Hence $\Delta(\mathcal{H}_{\bf a}^{(k)})$ is disconnected and thus, by Lemma \ref{hyperlemma}, we get $H_{\mathfrak{m}}^{n-2}(S/I^{(k)})_{\bf a}\neq0$. Therefore, we obtain $\reg S/I^{(k)}\geq |{\bf a}|+n-2$. By the definition of the vector $\bf a$, we have $|{\bf a}|=(n-sc(G))(k-1)$. Thus we get $$\reg S/I^{(k)}\geq (n-sc(G))(k-1)+(n-2)=(n-sc(G))k+sc(G)-2,$$ as required. 
\end{proof}

Combining Theorem \ref{reg-equality} with \cite[Theorem 4.1]{fm2025}, we can compare the regularities of symbolic powers and ordinary powers of $I_{c}(G)$, which gives a negative answer to \cite[Question 4.11]{fm2025}. 

\begin{cor}\label{reg-compare}
Let $G$ be a graph on the vertex set $[n]$. Then we have
\[
\reg I_{c}(G)^{(k)}=
\begin{cases}
(n-2)k, & \text{if } c(G)=1 \text { or }c(G)>1, sc(G)\geq3 \text{ and } k\geq2, \\
(n-2)k+1, & \text{ if } c(G)>1 \text{ and }sc(G)=2, \\
n-1, & \text{ if } c(G)>1, sc(G)\geq3 \text{ and } k=1. 
\end{cases}
\]
In particular, the following statements hold: 
\begin{enumerate}
\item [\textup{(a)}]  $\reg I_{c}(G)^{(k)}\leq \reg I_{c}(G)^k$ for all $k\geq 1$.
\item [\textup{(b)}]  The following are equivalent:
\begin{enumerate}
\item[\textup{(1)}]  $\reg I_{c}(G)^{(k)}=\reg I_{c}(G)^{k}$  for some $k\geq2$, 
\item[\textup{(2)}]  $\reg I_{c}(G)^{(k)}=\reg I_{c}(G)^{k}$ for all $k\geq1$, 
\item[\textup{(3)}]  $c(G)=1$ or $(c(G), sc(G))=(2,2)$.
\end{enumerate}
\item[\textup{(c)}]  $(\reg I_{c}(G)^{(k)}/k)_{k\geq1}$ is non-increasing.  
\item [\textup{(d)}]  $\lim_{k\rightarrow\infty}\dfrac{\reg I_{c}(G)^{(k)}}{k}=n-2.$
\end{enumerate}
\end{cor}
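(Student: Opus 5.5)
The plan is to read everything off from Theorem~\ref{reg-equality} together with the formula for ordinary powers in \cite[Theorem 4.1]{fm2025}. I take the latter in the form recorded in Theorem~\ref{regQuotient}: $\reg I_c(G)^k=(n-2)k+\min\{k,c(G)-1\}$. I am assuming that this is exactly the content of \cite[Theorem 4.1]{fm2025}; if that reference states it in a different but equivalent shape, only this translation step changes. Write $s=sc(G)$.

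First I derive the three-case formula. If $c(G)=1$, Theorem~\ref{reg-equality} gives $(n-2)k$ directly. If $c(G)>1$, I compare the two terms in the maximum:
\[
\bigl((n-s)k+s-1\bigr)-(n-2)k=(s-2)(1-k)+1 .
\]
\begin{enumerate}
\item[(i)] For $s=2$ the difference is $1$, so $\reg I_c(G)^{(k)}=(n-2)k+1$ for every $k$.
\item[(ii)] For $s\geq3$ and $k=1$ the difference is again $1$, giving $n-1$.
\item[(iii)] For $s\geq 3$ and $k\geq2$ the difference is at most $1-(s-2)\leq 0$, so the maximum is $(n-2)k$.
\end{enumerate}

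Next, parts (a) and (b) follow by comparing with $(n-2)k+\min\{k,c(G)-1\}$.
\begin{enumerate}
\item[(a)] If $c(G)=1$, both regularities equal $(n-2)k$. If $c(G)>1$, then $\min\{k,c(G)-1\}\geq1$, while the symbolic regularity is at most $(n-2)k+1$ by the three cases above. Hence $\reg I_c(G)^{(k)}\leq\reg I_c(G)^k$.
\item[(b)] The implication $(2)\Rightarrow(1)$ is trivial. For $(1)\Rightarrow(3)$, suppose equality holds for some $k\geq2$ and $c(G)>1$. If $s\geq3$, the symbolic value is $(n-2)k$, which is strictly smaller than $(n-2)k+\min\{k,c(G)-1\}$. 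So $s=2$ and $(n-2)k+1=(n-2)k+\min\{k,c(G)-1\}$. Since $k\geq2$, this forces $c(G)-1=1$, that is, $c(G)=2$. For $(3)\Rightarrow(2)$: when $c(G)=1$ both sides are $(n-2)k$. When $(c(G),s)=(2,2)$ both sides are $(n-2)k+1$; for $k=1$ this also matches, since $I^{(1)}=I$.
\end{enumerate}

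Finally, for (c) and (d) I look at $\reg I_c(G)^{(k)}/k$ in each case.
\begin{enumerate}
\item[(i)] When $c(G)=1$, the sequence is constantly $n-2$.
\item[(ii)] When $c(G)>1$ and $s=2$, it equals $n-2+1/k$, which is decreasing.
\item[(iii)] When $c(G)>1$ and $s\geq3$, it equals $n-1$ at $k=1$ and $n-2$ for all $k\geq2$.
\end{enumerate}
In every case the sequence is non-increasing and tends to $n-2$.

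The only point needing care is the sign analysis of $(s-2)(1-k)+1$, together with making sure the cited formula for ordinary powers is used correctly, including when $G$ has isolated vertices. This is harmless: $n$, $c(G)$ and $sc(G)$ are defined so that both theorems are already stated for arbitrary $G$.
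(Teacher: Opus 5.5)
Your proposal is correct and follows the route the paper indicates: combine Theorem~\ref{reg-equality} with the ordinary-power formula $\reg I_c(G)^k=(n-2)k+\min\{k,c(G)-1\}$ from \cite[Theorem 4.1]{fm2025}, and your sign analysis of $(s-2)(1-k)+1$ and the case-by-case comparisons are accurate. The only adjustment is to cite that formula directly from \cite{fm2025} rather than reading it off Theorem~\ref{regQuotient}, since the proof of the latter itself uses part (d) of this corollary.
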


As a consequence of Corollary \ref{reg-compare}, we obtain the following classification of the symbolic powers of complementary edge ideals that have a linear resolution. 

\begin{cor}\label{linear resol}
Let $G$ be a graph on $[n]$ with $n\geq3$, and let $k\geq1$. Then $I_{c}(G)^{(k)}$ has a linear resolution if and only if one of the following holds: 
\begin{enumerate}
\item[\textup{(1)}] $k=1$ and $c(G)=1$,
\item[\textup{(2)}]  $k\geq2$, and $G$ is one of the graphs $K_{2}\sqcup \ell K_{1}$,$ P_{3}\sqcup mK_1$, $C_{3}\sqcup mK_1$, $ P_{4}\sqcup mK_1$ and $C_{4}\sqcup mK_1$ for some $\ell\geq1$ and $m\geq 0$.  
 
\end{enumerate}
\end{cor}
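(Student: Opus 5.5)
The plan is to reduce linearity of the resolution to two numerical conditions, and then settle those using Corollary~\ref{reg-compare} together with explicit low-degree elements of $I_c(G)^{(k)}$.

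\textbf{Step 1 (reduction).} Let $A$ be the set of isolated vertices, $H=G-A$ and $m=|V(H)|$. Since $I_c(G)={\bf x}_AI_c(H)$, we have $I_c(G)^{(k)}={\bf x}_A^kI_c(H)^{(k)}$, which is a degree shift of $I_c(H)^{(k)}$. So linearity depends only on $H$.

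For $H$, a monomial ${\bf x}^{\bf a}$ lies in $I_c(H)^{(k)}$ if and only if, by (\ref{sym-comp}),
\[
\sum_{i\in e}a_i\geq k \text{ for every } e\in E(H^c), \qquad \sum_{i\in T}a_i\geq k \text{ for every } T\in\mathcal T(H).
\]
Since $I^k\subseteq I^{(k)}$, we have $\indeg I^{(k)}\le (m-2)k$. By \cite[Lemma 3.1]{SF}, $\reg I^{(k)}\ge (m-2)k$, and $\reg I^{(k)}$ is at least the maximal degree of a minimal generator. Hence $I_c(H)^{(k)}$ has a linear resolution if and only if
\[
\reg I_c(H)^{(k)}=(m-2)k \quad\text{and}\quad \indeg I_c(H)^{(k)}=(m-2)k .
\]
The same two conditions hold for $G$ with $n$ in place of $m$.

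\textbf{Step 2 (the case $k=1$).} Here $I_c(G)$ is generated in degree $n-2$. By Corollary~\ref{reg-compare}, $\reg I_c(G)=n-2$ exactly when $c(G)=1$. When $c(G)>1$ the regularity is $n-1$ (if $sc(G)\ge3$) or $(n-2)+1$ (if $sc(G)=2$). This gives (1).

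\textbf{Step 3 (the regularity condition for $k\ge2$).} By Corollary~\ref{reg-compare}, $\reg=(n-2)k$ if and only if $c(G)=1$ or $sc(G)\ge3$. Note that $c(G)>1$ with $sc(G)\ge3$ forces $m\ge6$.

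\textbf{Step 4 (the initial degree for $m\ge5$).} I will show that $\indeg<(m-2)k$ whenever $m\ge5$, which removes all these graphs.
\begin{itemize}
\item For even $k$, take $a_i=k/2$ for all $i$. This satisfies every constraint and has $|{\bf a}|=mk/2<(m-2)k$.
\item For $k=3$, choose an edge $uv$ of $H$, put $a_u=a_v=1$ and $a_i=2$ elsewhere. Each nonedge gets weight at least $3$, each triangle at least $4$, and $|{\bf a}|=2m-2<3m-6$.
\item For odd $k\ge5$, add $\tfrac{k-3}{2}(1,\dots,1)$ to the $k=3$ vector. The total is
\[
\tfrac{(k-3)m}{2}+2m-2,
\]
and $(m-2)k$ exceeds this by $\tfrac{(k-1)(m-4)}{2}>0$.
\end{itemize}

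\textbf{Step 5 (connected $H$ with $m\le4$).} It remains to handle $m\le4$ with $H$ connected; if $c(H)>1$ and $m\le 4$ then $sc=2$, which Step 3 excludes. The connected graphs are $K_2$, $P_3$, $C_3$, $P_4$, $C_4$, $K_{1,3}$, the paw, the diamond and $K_4$.

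For the five listed graphs I will exhibit disjoint constraint sets covering $m-2$ "units", giving the lower bound $\indeg\ge (m-2)k$:
\begin{itemize}
\item $P_3$: its single nonedge.
\item $C_3$: its triangle.
\item $P_4=1\!-\!2\!-\!3\!-\!4$: the nonedges $13$ and $24$.
\item $C_4$: its two disjoint nonedges.
\item $K_2$: trivial, since $m-2=0$.
\end{itemize}

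For the other four graphs I will exhibit vectors of total roughly $3k/2$ or $4k/3$, hence degree $<2k$ for $k\ge2$:
\begin{itemize}
\item $K_{1,3}$: weights $\lceil k/2\rceil,\lceil k/2\rceil,\lfloor k/2\rfloor$ on the leaves and $0$ on the center.
\item Paw (triangle $123$, pendant $4$ at $1$): $a_2=a_3=\lceil k/2\rceil$, $a_4=\lfloor k/2\rfloor$, $a_1=0$.
\item Diamond (nonedge $24$): $a_2=a_4=\lceil k/2\rceil$, with $a_1+a_3=\lfloor k/2\rfloor$ split suitably.
\item $K_4$: split $k$ as evenly as possible into integers $\lceil k/3\rceil$ or $\lfloor k/3\rfloor$ so that each triangle gets at least $k$.
\end{itemize}

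The main obstacle is exactly this bookkeeping: checking the integrality and parity of these small vectors (for instance, that the diamond's two triangles each reach $k$, and that the $K_4$ split works for every residue of $k$ mod $3$). Everything else follows directly from Corollary~\ref{reg-compare}.
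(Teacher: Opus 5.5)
Your proposal is correct and follows the paper's proof essentially step for step. You reduce to $H=G-A$, get the regularity from Corollary~\ref{reg-compare}, and rule out $|V(H)|\geq 5$ with the same witnesses ${\bf x}_{[n]}^{p}$ and ${\bf x}_{[n]}^{p}f$; these are your vector $\tfrac{k}{2}(1,\dots,1)$ and the shifted $k=3$ vector. You then exclude $c(H)>1$ with $sc=2$ (that is, $2K_2$) by regularity, and settle the connected graphs on at most four vertices by disjoint nonedges or explicit low-degree elements.

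The remaining differences are cosmetic: you derive $k=1$ from Corollary~\ref{reg-compare} rather than citing the literature, treat $P_3$ and $C_3$ via the lower bound rather than primality, and use smaller witnesses. One caution for $K_4$: taking all four weights equal to $\lceil k/3\rceil$ gives degree exactly $2k$ when $k=2,4$. Instead use $(q,q,q,q)$, $(q+1,q+1,q,q)$, $(q+1,q+1,q+1,q)$ for $k=3q$, $3q+1$, $3q+2$ respectively, or simply the paper's element $x_1^{k-1}x_2^{k-2}x_3x_4$ of degree $2k-1$.
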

\begin{proof}
If $k=1$, then, by \cite[Theorem 2.2]{hqm2026} or \cite[Corollary 4.8]{fm2025}, $I_c(G)$ has a linear resolution if and only if $c(G)=1$. Suppose $k\geq2$.
Let $A$ be the set of isolated vertices of $G$, and $H=G-A$. Suppose that $H= K_{2}$. Then, we have $G\cong K_{2}\sqcup \ell K_{1}$, where $\ell=|A|\geq 1$, as $n\geq 3$. In this case, $I_{c}(G)=({\bf x}_{A})$ is a principal ideal, and hence, $I_{c}(G)^{(k)}$ has a linear resolution. Thus, we may assume that $H\not\cong K_{2}$. Then $I_c(G)={\bf x}_AI_c(H)$ and by \cite[Proposition 6.3]{fmpoly2025} we have $I_c(G)^{(k)}={\bf x}_A^kI_c(H)^{(k)}$. So $I_c(G)^{(k)}$ has a linear resolution if and only if $I_c(H)^{(k)}$ has a linear resolution. Since $G=H\sqcup mK_1$, where $m=|A|$, we need to show that $I_c(H)^{(k)}$ has a linear resolution if and only if $H$ is one of the graphs $P_3,C_3,P_4$ and $C_4$. Let  $V(H)=[n]$ and $I=I_{c}(H)$. 
First, we suppose $n\geq5$. Notice that ${\bf x}_{[n]}\in I^{(2)}$. Suppose that $k=2p$ for some $p$. Then, we have ${\bf x}_{[n]}^{p}\in I^{(2p)}$. Since $\deg {\bf x}_{[n]}^{p}=np$, by the assumption that $n\geq5$, we obtain $np<2p(n-2)=(n-2)k$, which implies $\indeg I^{(k)}<(n-2)k$. Next, suppose that $k=2p+1$ for some $p$. Then, we have ${\bf x}_{[n]}^{p}f\in I^{(2p)}I\subset I^{(2p+1)}=I^{(k)}$ for any minimal monomial generator $f$ of $I$. Since $\deg {\bf x}_{[n]}^{p}f=np+(n-2)$, by the assumption that $n\geq5$, we obtain $np+n-2<2p(n-2)+n-2=(n-2)k$, which implies $\indeg I^{(k)}<(n-2)k$. Therefore, if $k\geq2$ and $n\geq5$, then $I^{(k)}$ does not have a linear resolution by Corollary \ref{reg-compare}. Hence we may assume that $n\leq4$. Suppose $n=3$. In this case, $H$ must be $P_{3}$ or $C_{3}$ since $H$ has no isolated vertices. In either case, $I$ is a monomial prime ideal, and hence, we see that $I^{(k)}=I^{k}$, by \cite[Corollary 4.8]{fm2025}, $I^{(k)}$ has a linear resolution. We now assume that $n=4$. Suppose that $c(H)=1$ and $H\not\cong P_{4}, C_{4}$. Then $H\cong K_{1,3}$ or $H$ has triangles. First, suppose $H\cong K_{1,3}$. We may assume that $V(H)=[4]$ and $E(H)=\{\{1,2\},\{1,3\},\{1,4\}\}$. If $k=2p$ for some $p$, then, we can verify $x_{2}^{p}x_{3}^{p}x_{4}^{p}\in I^{(k)}$, and hence $\indeg I^{(k)}\leq3p<2k=\reg I^{(k)}$. Also, if $k=2p+1$ for some $p$, then we can verify $x_{2}^{p}x_{3}^{p+1}x_{4}^{p+1}\in I^{(k)}$, and hence $\indeg I^{(k)}\leq3p+2<2k=\reg I^{(k)}$. Next, suppose that $H$ has  triangles. We may assume that $V(H)=[4]$ and $\{1,2,3\}\in\mathcal{T}(H)$. If $N_H(4)=\{1\}$, then we can verify $x_{1}^{k-2}x_{2}x_{3}x_{4}^{k-1}\in I^{(k)}$, and hence $\indeg I^{(k)}\leq2k-1<2k=\reg I^{(k)}$. Also, if $N_H(4)=\{1,2\}$, then we can verify $x_{1}^{k-1}x_{3}x_{4}^{k-1}\in I^{(k)}$, and hence $\indeg I^{(k)}\leq2k-1<2k=\reg I^{(k)}$. Moreover, if $N_H(4)=\{1,2,3\}$, then we can verify $x_{1}^{k-1}x_{2}^{k-2}x_{3}x_{4}\in I^{(k)}$, and hence $\indeg I^{(k)}\leq2k-1<2k=\reg I^{(k)}$. Therefore, if $c(H)=1$ and $H\not\cong P_{4}, C_{4}$, then $\indeg I^{(k)}<\reg I^{(k)}$, that is, $I^{(k)}$ does not have a linear resolution. Suppose that $c(H)>1$ and $n=4$. Then $H\cong K_{2}\sqcup K_{2}$. In this case, by Corollary \ref{reg-compare}, we get $\reg I^{(k)}=2k+1$. Since $\indeg I^{(k)}\leq\indeg I^{k}=2k<2k+1=\reg I^{(k)}$, $I^{(k)}$ does not have a linear resolution. Finally, suppose that $H\cong P_{4}$ or $C_{4}$. We may assume that $\{1,3\}, \{2,4\}\notin E(H)$. Let ${\bf x}^{\bf a}=x_{1}^{a_{1}}x_{2}^{a_{2}}x_{3}^{a_{3}}x_{4}^{a_{4}}\in I^{(k)}$. Then, we have $a_{1}+a_{3}\geq k$ and $a_{2}+a_{4}\geq k$, which implies that $\deg {\bf x}^{\bf a}\geq 2k$. Hence, $\indeg I^{(k)}\geq 2k$. Also, we have $\indeg I^{(k)}\leq\indeg I^{k}=2k$, and hence $\indeg I^{(k)}=2k=\reg I^{(k)}$ by Corollary \ref{reg-compare}, as required. 
\end{proof}

We conclude the section with an explicit formula for $\reg I_{c}(G)^{(k)}/I_{c}(G)^{k}$ for an arbitrary graph $G$ and all $k\geq2$. To establish the formula, we first prove the following lemma.

\begin{lemma}\label{regI/J}
Let $I$ and $J$ be homogeneous ideals of $S$ with $0\neq J\subsetneq I$. Then
$$\reg I/J \geq\indeg J-1.$$
In particular, if $I^{(k)}\neq I^k$, then $\reg I^{(k)}/I^k\geq k\indeg I-1$ for all $k\geq 1$.
\end{lemma}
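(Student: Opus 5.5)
The plan is to use the short exact sequence
\[
0\longrightarrow J\longrightarrow I\longrightarrow I/J\longrightarrow 0
\]
together with the standard regularity inequality $\reg J\leq \max\{\reg I,\ \reg I/J+1\}$. This inequality follows from the long exact sequence in $\Tor$ (or in local cohomology) attached to the sequence above. Since $J\neq0$, we always have $\reg J\geq \indeg J$. However, this alone is not enough, because the inequality could be absorbed by the $\reg I$ term. So I would instead argue directly in the lowest degree where $J$ lives.

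Set $d=\indeg J$, and pick a minimal generator $f$ of $J$ of degree $d$. Then $\beta_{0,d}(J)\neq 0$. Look at the map $\Tor_0(J,K)_d\to\Tor_0(I,K)_d$, i.e. $(J/\mathfrak m J)_d\to (I/\mathfrak m I)_d$.

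If this map is not injective, then the long exact sequence
\[
\Tor_1(I,K)_d\to\Tor_1(I/J,K)_d\to\Tor_0(J,K)_d\to\Tor_0(I,K)_d
\]
gives $\Tor_1(I/J,K)_d\neq0$. Hence $\reg I/J\geq d-1$.

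If the map is injective, I would use the quotient description: a nonzero element of $\Tor_1(I/J,K)_d$ arises from an element of $J_d$ that lies in $\mathfrak m I$. The difficulty is the case where $f$ is itself a minimal generator of $I$. In that case $(I/J)$ loses that generator, and there is no obvious reason for degree-$d$ syzygy information.

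So the more robust route is local cohomology. The module $I/J$ has $(I/J)_j\neq0$ for some $j$, and $I/J$ is generated in degrees at least $\indeg I$. This gives no direct handle, so I would try the exact sequence $0\to I/J\to S/J\to S/I\to 0$ instead. From it, $\reg S/J\leq\max\{\reg I/J,\ \reg S/I+1\}$; this is still not decisive.

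I would then try a third angle and expect this one to be the real argument. If $\reg I/J<d-1$, then $I/J$ has a linear-type truncation: by Eisenbud--Goto, $(I/J)_{\geq r}$ has a linear resolution for $r\geq\reg I/J$. Comparing Hilbert functions in degree $d-1$ and $d$ would then force $J_d=0$, which is a contradiction.

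Concretely, $(I/J)_{\geq d-1}=I_{\geq d-1}/J$ since $J_{<d}=0$. If $\reg I/J\leq d-2$, then $I/J$ and $I$ agree in degree $\le d-1$. Meanwhile $(I/J)_{\geq d-1}$ is generated in degree $d-1$ with linear resolution. The same holds for $I_{\geq d-1}$ whenever $\reg I\leq d-1$, but that is not assumed. The main obstacle is exactly making this comparison work without a hypothesis on $\reg I$.

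What I would try first is the degree-$d$ piece of $\Tor_1$ via the Koszul complex. Using $\Tor_1(M,K)_d=H_1(M\otimes\mathbf K)_d$, a relation $\sum x_i g_i=0$ in $(I/J)_d$ with $g_i\in I_{d-1}=(I/J)_{d-1}$ that is nontrivial mod $J$ would do, i.e. one that is nonzero in homology. Such a relation is obtained by writing $f=\sum x_i g_i$ when $f\in\mathfrak m I$. The remaining case is when $f$ is a minimal generator of $I$; there I would replace $f$ by $\mathfrak m f$ and pass to degree $d+1$, where $\Tor$ appears at homological degree $2$. That also gives $\reg\geq d-1$.

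The particular statement then follows by taking $J=I^k\subsetneq I^{(k)}$, with $\indeg I^k=k\indeg I$.
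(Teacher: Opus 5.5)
Your Case A is correct. If $J_d\cap(\mathfrak m I)_d\neq 0$ for $d=\indeg J$, the long exact sequence gives $\Tor_1(I/J,K)_d\neq 0$, hence $\reg I/J\geq d-1$.

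\textbf{The gap.} The proof is incomplete in the remaining case $J_d\cap(\mathfrak m I)_d=0$, i.e.\ when every nonzero element of $J_d$ is a minimal generator of $I$. You propose to pass to $\mathfrak m f$ in degree $d+1$ and find a class in $\Tor_2(I/J,K)_{d+1}$, but nothing produces such a class:
\begin{enumerate}
\item[(i)] $x_if\in\mathfrak mJ$ vanishes in $\Tor_0(J,K)$, so it contributes nothing there;
\item[(ii)] the would-be Koszul cycle $fe_i$ is zero in $(I/J)\otimes\mathbf K$, because $\bar f=0$ in $I/J$.
\end{enumerate}
The claim is in fact false in general. Take $S=K[x,y]$, $I=(x,y)$, $J=(x)$. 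This is exactly your remaining case with $d=1$. But $I/J\cong (S/(x))(-1)$ has projective dimension $1$, so $\Tor_2(I/J,K)=0$. In this example the class you need lies in $\Tor_1(I/J,K)_2$. It comes from the relation $x\cdot\bar y=0$, which is the image of the Koszul syzygy of $I$, not anything built from the generator of $J$. Your other routes (local cohomology, Eisenbud--Goto) you abandon yourself, so they do not close the gap.

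\textbf{The missing idea.} You do not need a syzygy in degree exactly $d$. Any minimal first syzygy of $I/J$ of degree $b\geq \indeg J$ already gives $\reg I/J\geq b-1\geq\indeg J-1$. Such a syzygy exists because $I/J$ is torsion (it is annihilated by $J\neq 0$) while $I$ is torsion-free. This is the paper's argument:
\begin{enumerate}
\item[(1)] Take a minimal presentation $F_1\xrightarrow{\delta_1}F_0\xrightarrow{\epsilon}I/J\to 0$ and a degree-zero lift $\varphi\colon F_0\to I$ of $\epsilon$. Then $\varphi\circ\delta_1$ takes values in $J$.
\item[(2)] For $0\neq g\in J$ we have $gF_0\subseteq\ker\epsilon=\operatorname{Im}\delta_1$.
\item[(3)] On the other hand, $\varphi(gF_0)=g\varphi(F_0)\neq 0$, since $S$ is a domain and $\varphi(F_0)\not\subseteq J$.
\item[(4)] Hence $\varphi\circ\delta_1\neq 0$. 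Some basis element $e_j$ of $F_1$ maps to a nonzero homogeneous element of $J$, which forces $\deg e_j\geq \indeg J$.
\end{enumerate}
Equivalently: if all first syzygies had degree $<\indeg J$, then $\varphi\circ\delta_1=0$ and $\varphi$ would descend to a splitting $I/J\to I$ of the projection. That is impossible for a nonzero torsion module mapping into a torsion-free one. This argument handles both of your cases at once. Your deduction of the ``in particular'' part from $J=I^k\subsetneq I^{(k)}$ with $\indeg I^k=k\indeg I$ is fine.
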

\begin{proof}
Take a minimal graded free resolution
$$\cdots\rightarrow \bigoplus_{j}S(-b_{j})\xrightarrow{\delta_{1}}F_{0}\xrightarrow{\epsilon}I/J\rightarrow0.$$
Let $\varphi:F_{0}\rightarrow I$ be a degree zero lift of $\epsilon$. Then we see that $\operatorname{Im}(\varphi\circ\delta_{1})\subset J$. For $0\neq g\in J$, we have $gF_{0}\subset\operatorname{Ker}\epsilon=\operatorname{Im}\delta_{1}$, which implies that $0\neq g\varphi(F_{0})=\varphi(gF_{0})\subset\operatorname{Im}(\varphi\circ\delta_{1})$. Hence, there exists a homogeneous basis element $e_{j}$ of $\bigoplus_{j}S(-b_{j})$ such that $(\varphi\circ\delta_{1})(e_{j})\neq0$. Since $\deg e_{j}=b_{j}$ and $(\varphi\circ\delta_{1})(e_{j})\in J$, we obtain that 
$$\reg I/J\geq b_{j}-1\geq\indeg J-1,$$
which completes the proof. 
\end{proof}

\begin{thm}\label{regQuotient}
Let $G$ be a graph, and let $k$ be a positive integer with $I_{c}(G)^{(k)}/I_{c}(G)^{k}\neq0$. Then we have $$\reg I_{c}(G)^{(k)}/I_{c}(G)^{k}=\reg I_{c}(G)^{k}-1= (n-2)k+\min\{k,c(G)-1\}-1.$$
\end{thm}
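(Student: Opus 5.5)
We prove the two inequalities separately, with the lower bound from Lemma~\ref{regI/J} and the upper bound from the short exact sequence
$$0\to I_c(G)^{(k)}/I_c(G)^k\to S/I_c(G)^k\to S/I_c(G)^{(k)}\to 0 .$$
We use the formula $\reg I_c(G)^k=(n-2)k+\min\{k,c(G)-1\}$ from \cite[Theorem 4.1]{fm2025}. As in the proof of Theorem~\ref{reg-equality}, we may first reduce to the case where $G$ has no isolated vertices. Write $I_c(G)={\bf x}_A I_c(H)$; then $I_c(G)^{(k)}={\bf x}_A^k I_c(H)^{(k)}$ and $I_c(G)^k={\bf x}_A^kI_c(H)^k$. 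Multiplication by ${\bf x}_A^k$ is a degree-$k|A|$ shift, which adds $k|A|$ to both sides of the formula. The hypothesis $I^{(k)}\neq I^k$ forces $H\not\cong K_2$, since otherwise $I$ is principal.

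\textbf{Upper bound.} The exact sequence gives
$$\reg I^{(k)}/I^k\le\max\{\reg S/I^k,\ \reg S/I^{(k)}+1\}=\max\{\reg I^k-1,\ \reg I^{(k)}\}.$$
By Corollary~\ref{reg-compare}(a), $\reg I^{(k)}\le \reg I^k$, but this only yields $\reg I^{(k)}/I^k\le\reg I^k$. We therefore need $\reg I^{(k)}\le\reg I^k-1$ in every case where $I^{(k)}\neq I^k$, or else a sharper argument.

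Comparing the formulas, $\reg I^{(k)}<\reg I^k$ holds except when $c(G)=1$, or when $(c(G),sc(G))=(2,2)$ (Corollary~\ref{reg-compare}(b)), or when $k=1$. For $k=1$ the quotient is zero, so that case is excluded.

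\textbf{Equality cases.} In the equality cases the crude bound fails, and we plan to use the long exact sequence of local cohomology instead. Here $\reg I^{(k)}=\reg I^k=:r$, and the question is whether the top-degree piece of $S/I^{(k)}$ realizing $r-1$ lifts to $S/I^k$.

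\emph{Case $c(G)=1$.} The maximal value $i+|{\bf a}|=k(n-2)-1$ requires $p+1=n-2$ in the proof of Theorem~\ref{reg-equality}. One would show that the connecting map $H^{i}_{\mathfrak m}(S/I^{(k)})\to H^{i+1}_{\mathfrak m}(I^{(k)}/I^k)$ vanishes in those degrees. An alternative is to argue via Betti numbers: $I^k$ and $I^{(k)}$ share all generators in degree $(n-2)k$, so the bound on Betti numbers of the quotient in the relevant top strand can be taken from $\Tor$ of the inclusion. Since $I^k$ has a linear resolution when $c(G)=1$, the sequence $\Tor_i(I^{(k)})\to\Tor_{i-1}(I^k)$ lies in degrees at most $(n-2)k+i-1$. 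Hence $\reg I^{(k)}/I^k\le\max\{\reg I^{(k)},\reg I^k-1\}$. Refining this via the $\Tor$ sequence
$$\Tor_i(I^k)\to\Tor_i(I^{(k)})\to\Tor_i(I^{(k)}/I^k)\to\Tor_{i-1}(I^k),$$
we must show that linear-strand classes of $I^{(k)}$ in degree $(n-2)k+i$ come from $I^k$. This is the \emph{main obstacle}.

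I would attack it by restricting to squarefree-part multidegrees using Lemma~\ref{hyperlemma}. A nonzero multidegree ${\bf a}$ with $|{\bf a}|+i=k(n-2)-1$ forces equality throughout (\ref{|a|+i}), so ${\bf a}$ has a very rigid form. One then checks directly that the corresponding $\Delta_{\bf a}$ for $I^k$ gives the same homology, so the map on local cohomology is an isomorphism there.

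\emph{Case $(c,sc)=(2,2)$.} The same argument is applied to the disconnected extremal vectors constructed at the end of the proof of Theorem~\ref{reg-equality}.

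\textbf{Lower bound.} Lemma~\ref{regI/J} gives only $\reg I^{(k)}/I^k\ge k(n-2)-1$, which matches the target exactly when $\min\{k,c(G)-1\}=0$, that is, $c(G)=1$. For $c(G)\ge2$ we instead use the exact sequence in the other direction:
$$\reg S/I^k\le\max\{\reg I^{(k)}/I^k,\ \reg S/I^{(k)}\}.$$
Since $\reg S/I^{(k)}<\reg S/I^k$ outside the equality cases, this gives $\reg I^{(k)}/I^k\ge\reg I^k-1$. In the equality case $(2,2)$, we use the surjection in local cohomology at the extremal degree of $S/I^k$ found in \cite{fm2025} and check that this degree is not attained by $S/I^{(k)}$.
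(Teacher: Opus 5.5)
Your reduction to the case without isolated vertices is correct. So is your treatment of the cases $c(G)\ge 3$ and $c(G)=2$ with $sc(G)\ge 3$: there $\reg I^{(k)}\le \reg I^k-1$ for $k\ge 2$, and the two regularity inequalities for $0\to I^{(k)}/I^k\to S/I^k\to S/I^{(k)}\to 0$ give both bounds, exactly as in the paper's Case~2. The gap is in the cases $c(G)=1$ and $(c(G),sc(G))=(2,2)$, which you flag but do not resolve.

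\textbf{Upper bound in these cases.} Comparing $\Delta_{\bf a}(I^k)$ with $\Delta_{\bf a}(I^{(k)})$ has no tool behind it: Lemma~\ref{deg cpx} only describes degree complexes of symbolic powers. Your $\Tor$ reformulation only restates what has to be shown. The missing observation is that $\dim I^{(k)}/I^k\le n-3$. The quotient vanishes after localizing at every minimal prime of $I$, so its associated primes strictly contain a minimal prime and have height at least $3$. Hence $H^i_{\mathfrak m}(I^{(k)}/I^k)\neq 0$ only for $i\le n-3$. Through the connecting map it only receives $H^{i-1}_{\mathfrak m}(S/I^{(k)})$ with $i-1\le n-4$, and these low local cohomology modules vanish in the relevant degrees. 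The paper gets $H^i_{\mathfrak m}(S/I^{(k)})_j=0$ for all $i$ and all $j>(n-2)k-n+2$ from \cite[Theorem 2.2]{ht2023}. You can also read this off inequality (\ref{|a|+i}) in the proof of Theorem~\ref{reg-equality}: since $p=i-|\supp_-{\bf a}|\le i$, one gets $|{\bf a}|+i\le k(i+1)-1\le k(n-3)-1$ for $i\le n-4$. Either way, $H^i_{\mathfrak m}(I^{(k)}/I^k)_j=0$ whenever $i+j>\reg S/I^k$ (note $\reg S/I^k\ge (n-2)k-1$), uniformly in all cases. In particular, the extremal degrees you worry about force $i\ge n-3$, so they map into $H^{i+1}_{\mathfrak m}(I^{(k)}/I^k)=0$.

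\textbf{Lower bound in the case $(c(G),sc(G))=(2,2)$.} Here $G=C_1\sqcup C_2$ with $C_1\cong K_2$ and $C_2$ connected on at least $3$ vertices; the case $2K_2$ is excluded since then $I^{(k)}=I^k$. The case is nonvacuous: for $K_2\sqcup P_3$ one has ${\bf x}_{[5]}\in I^{(2)}\setminus I^2$. Your lower bound is also missing here. Since $\reg S/I^{(k)}=\reg S/I^k=(n-2)k$, the exact sequence gives nothing. The dimension bound shows $H^{n-2}_{\mathfrak m}(S/I^k)\cong H^{n-2}_{\mathfrak m}(S/I^{(k)})$, so a witness must sit in cohomological degree $\le n-3$, and you do not produce one. \cite{fm2025} does not supply such a local cohomology class; it computes $\reg I^k$ through a Betti splitting, and the paper uses that splitting on the $\Tor$ side:
\begin{enumerate}
\item[(i)] Write $I^k=P+Q$ with $P\cap Q={\bf x}_{C_1}^k{\bf x}_{C_2}J_2^{k-1}$, where $J_2=I_c(C_2)$. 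For $k\ge 2$ this ideal has a $((n-2)k+2)$-linear resolution and at least two generators, so $\beta_{3,(n-2)k+3}(S/I^k)\ge \beta_{1,(n-2)k+3}(P\cap Q)>0$.
\item[(ii)] The $a$-invariant bound together with local duality gives $\beta_{3,(n-2)k+3}(S/I^{(k)})=0$.
\item[(iii)] The $\Tor$ long exact sequence of $0\to I^{(k)}/I^k\to S/I^k\to S/I^{(k)}\to 0$ then forces $\beta_{3,(n-2)k+3}(I^{(k)}/I^k)\neq 0$, that is, $\reg I^{(k)}/I^k\ge (n-2)k$.
\end{enumerate}
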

\begin{proof}
Set $I=I_c(G)$. 
First, assume that $G$ has no isolated vertices. We claim that 
\begin{equation}\label{ineqReg}
 (n-2)k-1\leq\reg I^{(k)}/I^{k}\leq \reg S/I^{k}.    
\end{equation}
By Lemma~\ref{regI/J} and the fact that $I$ is generated in degree $n-2$, we obtain the left-hand side inequality. So in what follows, we verify the inequality $\reg I^{(k)}/I^{k}\leq \reg S/I^{k}$. From \cite[Theorem 4.1]{fm2025}, we know 
$$\reg S/I^{k}=(n-2)k+\min\{k, c(G)-1\}-1.$$ Since $\height I\geq2$, we see that $\dim I^{(k)}/I^{k}\leq n-3$ and $\dim S/I^{(k)}\leq n-2$. Now, from Corollary \ref{reg-compare} (d), we know $\lim_{k\rightarrow\infty}\reg I^{(k)}/{k}=n-2$. Hence, from \cite[Theorem 2.2]{ht2023}, for all $i\geq0$ and $j>(n-2)k-n+2$, we obtain $H_{\mathfrak{m}}^{i}(S/I^{(k)})_{j}=0$. Consider the natural short exact sequence 
$$0\rightarrow I^{(k)}/I^{k}\rightarrow S/I^{k}\rightarrow S/I^{(k)}\rightarrow0. \quad(*)$$
Take $i,j$ such that $0\leq i\leq n-3$ and $j+i>\reg S/I^{k}$. Then we have $H_{\mathfrak{m}}^{i-1}(S/I^{(k)})_{j}=0$. From the long exact sequence of local cohomology modules induced from $(*)$, we see that $H_{\mathfrak{m}}^{i}(I^{(k)}/I^{k})_{j}=0$. Also, if $i>n-3$, then, since $\dim I^{(k)}/I^{k}\leq n-3$, we also obtain that $H_{\mathfrak{m}}^{i}(I^{(k)}/I^{k})_{j}=0$. Hence, we get $\reg I^{(k)}/I^{k}\leq\reg S/I^{k}$, and the claim is proved. 
Moreover, from graded local duality theorem (see, \cite[Theorem 3.6.19]{BHBook}), we see that $\beta_{i,j}(S/I^{(k)})=0$ for all $i\geq0$ and $j>(n-2)k+2$. 
We distinguish the following cases. 

{\bf Case 1.} Suppose that $c(G)=1$. Then, by (\ref{ineqReg}), 
we clearly have $\reg I^{(k)}/I^{k}=\reg S/I^{k}=(n-2)k-1$. 

{\bf Case 2.} Suppose that $c(G)\geq3$. Since $k\geq2$, we have $\reg S/I^{k}\geq(n-2)k+1$. Then, from the long exact sequence of local cohomology modules induced from $(*)$ and the fact that $\reg S/I^{(k)}\leq (n-2)k$, we obtain that 
$$\reg S/I^{k}\leq\max\{\reg I^{(k)}/I^{k}, \reg S/I^{(k)}\}\leq\max\{\reg I^{(k)}/I^{k}, (n-2)k\},$$
which implies that $\reg I^{(k)}/I^{k}\geq \reg S/I^{k}$. This together with (\ref{ineqReg}) proves the assertion.

{\bf Case 3.} Suppose that $c(G)=2$. We may assume that $G\neq 2K_{2}$. Let $C_{1}$ and $C_{2}$ be connected component of $G$ and set $n_{1}=|V(C_{1})|$ and $n_{2}=|V(C_{2})|$. We may assume that $|V(C_{2})|\geq3$. Now we have 
$I={\bf x}_{C_{2}}J_{1}+{\bf x}_{C_{1}}J_{2}$, where $J_{i}=I_{c}(C_{i})$ for $i=1,2$. From the proof of \cite[Theorem 4.1]{fm2025}, we know $I^{k}=P+Q$ is a Betti splitting and $P\cap Q={\bf x}_{C_{1}}^{k}{\bf x}_{C_{2}}J_{2}^{k-1}$, where $P=\sum_{0\leq t\leq k-1}{\bf x}_{C_{2}}^{k-t}{\bf x}_{C_{1}}^{t}J_{1}^{k-t}J_{2}^{t}$ and $Q={\bf x}_{C_{1}}^{k}J_{2}^{k}$. Therefore, we know 
$$\beta_{i,j}(I^{k})=\beta_{i,j}(P)+\beta_{i,j}(Q)+\beta_{i-1,j}(P\cap Q).$$
Now, from \cite[Theorem 4.1]{fm2025}, we see that $J_{2}^{k-1}$ has an $(n_{2}-2)(k-1)$-linear resolution. Hence, $P\cap Q$ has an $(n-2)k+2$-linear resolution, which implies that $\beta_{1, (n-2)k+3}(P\cap Q)>0$. Therefore, we obtain that 
$$\beta_{3, (n-2)k+3}(S/I^{k})=\beta_{2,(n-2)k+3}(I^{k})\geq\beta_{1,(n-2)k+3}(P\cap Q)>0.$$
Now, from the short exact sequence $(*)$, we obtain the long exact sequence 
$$\cdots\rightarrow\Tor_{3}(I^{(k)}/I^{k})_{(n-2)k+3}\rightarrow\Tor_{3}(S/I^{k})_{(n-2)k+3}\rightarrow\Tor_{3}(S/I^{(k)})_{(n-2)k+3}\rightarrow\cdots.$$
Since $\beta_{3,(n-2)k+3}(S/I^{(k)})=0$, we have $\Tor_{3}(S/I^{(k)})_{(n-2)k+3}=0$. On the other hand, we know $\beta_{3,(n-2)k+3}(S/I^{k})>0$. Hence, $\Tor_{3}(I^{(k)}/I^{k})_{(n-2)k+3}\neq0$, that is, $\beta_{3,(n-2)k+3}(I^{(k)}/I^{k})\neq0$, and thus we see that $$\reg I^{(k)}/I^{k}\geq((n-2)k+3)-3=(n-2)k=\reg S/I^{k},$$ as required.

Now, assume that $G$ has isolated vertices, and let $A$ be the set of isolated vertices of $G$. Then $I={\bf x}_AI_c(H)$, where $H=G-A$ and hence 
$I^{(k)}/I^{k}\cong (I_c(H)^{(k)}/I_c(H)^{k})(-k|A|)$. This implies that 
\[
\begin{aligned}
\reg I^{(k)}/I^{k}
&=\reg I_c(H)^{(k)}/I_c(H)^{k}+k|A|\\
&=\reg I_c(H)^k+k|A|-1\\
&=(n-|A|-2)k+\min\{k,c(H)-1\}+k|A|-1\\
&=(n-2)k+\min\{k,c(G)-1\}-1,
\end{aligned}
\]
which completes the proof. 
\end{proof}


\section{depth of symbolic powers of $I_{c}(G)$}\label{sec4}

In this section, we study the behavior of the symbolic depth function $k\mapsto \depth S/I_c(G)^{(k)}$. We determine its precise value for $k=2,3,4$, establish bounds for arbitrary symbolic powers, and show that, although the symbolic depth function need not be non-increasing, its values satisfy strong comparison inequalities when the exponents are sufficiently far apart. We further investigate how the multi-whiskering operation affects this function.

For an arbitrary graph $G$ with the set of isolated vertices $A$, we have $I_{c}(G)^{(k)}={\bf x}_A^k I_{c}(H)^{(k)}$, where $H=G-A$ (see \cite[Proposition 6.3]{fmpoly2025}). Consequently, if $I_c(H)\neq (1)$, or equivalently $H\not\cong K_2$, then
\begin{equation}\label{depth-isolated}
 \depth S/I_{c}(G)^{(k)}=\depth S/I_{c}(H)^{(k)}=\depth S'/I_{c}(H)^{(k)}+|A|,   
\end{equation}
 where $S'=S/(x_i:\, i\in A)$. We use this fact throughout this section, to reduce to the case that the graph has no isolated vertices.

We first establish a uniform lower bound for the depth of all symbolic powers.

\begin{prop}\label{prop-lowerbound}
Let $G$ be a graph with  $s$ isolated vertices and $|V(G)|-s\geq7$. Then we have $$\depth S/I_{c}(G)^{(k)}\geq s+2\mbox{ for all }k\geq1.$$
\end{prop}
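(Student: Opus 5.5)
By the reduction \eqref{depth-isolated}, we may delete the isolated vertices. Write $H=G-A$ with $|A|=s$, and note that $H\not\cong K_2$ since $|V(H)|\geq 7$. Then $\depth S/I_c(G)^{(k)}=\depth S'/I_c(H)^{(k)}+s$. So it suffices to prove the following: if $G$ has no isolated vertices and $n\geq 7$, then $\depth S/I_c(G)^{(k)}\geq 2$ for every $k\geq 1$. Equivalently, $H^i_{\mathfrak m}(S/I_c(G)^{(k)})_{\bf a}=0$ for $i\in\{0,1\}$ and all ${\bf a}\in\mathbb Z^n$. We argue by contradiction, following the opening of the proof of Theorem~\ref{reg-equality}.

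Suppose $H^i_{\mathfrak m}(S/I^{(k)})_{\bf a}\neq 0$ with $i\leq 1$. By Lemma~\ref{hyperlemma}, $U=[n]\setminus\supp_-{\bf a}$ is nonempty, and $\beta_{p,{\bf x}_U}(I(\mathcal H^{(k)}_{\bf a}(G)))\neq 0$ with $p=i-n+|U|$.

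First, $p\geq 0$. A negative index would give a vanishing Betti number. One can also see this directly: then $|U|\leq n-2$, and a nonzero $\widetilde H^{n-i-2}\geq \widetilde H^{n-3}$ of a complex on at most $n-2$ vertices would force the complex to be the void complex. That is impossible, because every singleton of $U$ is a face, as hyperedges have size at least $2$.

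Next, the Taylor resolution argument from the proof of Theorem~\ref{reg-equality} applies. The multidegree ${\bf x}_U$ must be the lcm of $p+1$ generators, so there are hyperedges $F_0,\dots,F_p$ with $F_0\cup\cdots\cup F_p=U$. The key observation is that every hyperedge of $\mathcal H^{(k)}_{\bf a}(G)$ is a nonedge or a triangle, so $|F_j|\leq 3$. Hence
\[
|U|\leq 3(p+1)=3(|U|-n+i+1)\leq 3(|U|-n+2),
\]
which gives $2|U|\geq 3n-6$, that is, $|U|\geq \tfrac{3n-6}{2}$. For $n\geq 7$ we have $\tfrac{3n-6}{2}>n\geq |U|$, a contradiction. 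Therefore $H^0_{\mathfrak m}$ and $H^1_{\mathfrak m}$ vanish in every multidegree, so $\depth S/I_c(G)^{(k)}\geq 2$, and adding back the $s$ isolated vertices gives the claimed bound.

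The only delicate points are the bookkeeping at the boundary cases: making sure $p\geq 0$ (the void-complex issue) and that Lemma~\ref{hyperlemma} applies because $U\neq\emptyset$. The heart of the argument is the counting inequality. It is also exactly where the hypothesis $|V(G)|-s\geq 7$ enters: for $n\leq 6$, for instance with $|U|=n=6$ and $p=1$, two disjoint triangles or nonedges can cover $U$, and the bound can fail.
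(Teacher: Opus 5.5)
Your proof is correct, but it argues in the Alexander-dual picture rather than the paper's.

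The paper works directly with degree complexes. For ${\bf a}\in\mathbb N^m$, Lemma~\ref{deg cpx} shows that $\Delta_{\bf a}(I_c(H)^{(k)})$ is generated by the sets $[m]\setminus F$ with $F$ a nonedge or a triangle. Any two of these meet because $|F\cup F'|\le 6<m$, so the complex is connected and Takayama's formula gives depth at least $2$. You instead pass through Lemma~\ref{hyperlemma} and the Taylor covering argument from the proof of Theorem~\ref{reg-equality}. Your condition ``$U$ is covered by $p+1\le 2$ hyperedges of size at most $3$'' is precisely the dual of ``two facets of the degree complex are disjoint.''

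The paper's route is more elementary, since it avoids Alexander duality, Hochster's formula and the Taylor resolution. However, it only examines ${\bf a}\in\mathbb N^m$ and $\widetilde H_0$, leaving the case $i=0$ and the multidegrees with $\supp_-{\bf a}\neq\emptyset$ implicit. Your single inequality disposes of all ${\bf a}\in\mathbb Z^n$ and both $i=0,1$ at once.

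Your argument also scales for free. For arbitrary $i$ it gives $2|U|\ge 3(n-i-1)$, hence $\depth S/I_c(G)^{(k)}\ge\lceil (n-3)/3\rceil$ for every graph on $n\ge 3$ vertices without isolated vertices. This beats the proposition once $n\ge 10$; on the paper's side one would need something like the nerve lemma to get this.

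One small blemish: your ``direct'' check of $p\ge 0$ is garbled. For $i=0$ one only gets $|U|\le n-1$, and the relevant fact is that no simplicial complex on a nonempty ground set $U$ has reduced cohomology in degree $\ge |U|-1$. Your first reason, that Betti numbers in negative homological degree vanish, already suffices.
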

\begin{proof}
Let $A$ be the set of isolated vertices of $G$, let $H=G-A$ and $V(H)=[m]$. Since $m\geq 7$, by (\ref{depth-isolated}), it is enough to show that $\depth S/I_{c}(H)^{(k)}\geq 2$. Fix $k\geq1$ and ${\bf a}\in\mathbb{N}^{m}$. By Lemma \ref{deg cpx}, have $$\Delta_{\bf a}(I_{c}(H)^{(k)})=\langle[m]\setminus F\,:\,  [m]\setminus F \text{ is a facet of }\Delta(I_{c}(H)),\,\,  \sum_{i\in F}a_{i}\leq k-1\rangle.$$ 
Using \cite[Theorem 2.1]{fm2025} we know that $[m]\setminus F$  is a facet of $\Delta(I_{c}(H))$ if and only if $|F|=2$ with
$F\notin E(H)$, or $F$ is a triangle in $H$.
Fix facets $[m]\setminus F$, $[m]\setminus F^{\prime}$ of $\Delta_{\bf a}(I_{c}(H)^{(k)})$. Then we have $([m]\setminus F)\cap([m]\setminus F^{\prime})=[m]\setminus(F\cup F^{\prime})$ and $|F\cup F^{\prime}|\leq|F|+|F^{\prime}|\leq6$. Since $m\geq7$, we obtain $([m]\setminus F)\cap([m]\setminus F^{\prime})\neq\emptyset$, which implies that $\Delta_{\bf a}(I_{c}(H)^{(k)})$ is connected. Thus $\widetilde{H}_{0}(\Delta_{\bf a}(I_{c}(H)^{(k)});K)=0$ Therefore, by Takayama's formula we obtain $\depth S/I_{c}(H)^{(k)}\geq2$ for all $k\geq1$, as required. 
\end{proof}

Recall that a monomial ideal $I\subset S$ has {\em linear quotients} if there exists an ordering $u_1,\ldots,u_m$ of its minimal monomial generators such that, for every $j=2,\ldots,m$, the colon ideal
$(u_1,\ldots,u_{j-1}):u_j$
is generated by a subset of the variables. We put $\set(u_j) = \{i :\,  x_i \in (u_1,\dots,u_{j-1}):u_j\}$. The ideal $I$ is called {\em componentwise linear}, if $I_{\langle d\rangle}= (f\in I:\, f \textrm{ is homogeneous of degree } d)$ has a linear
resolution for all $d\geq 1$.
The following theorem provides a tool for computing the depth of the second symbolic power of $I_{c}(G)$.

\begin{thm}\label{second-power}
Let $G$ be a graph on $[n]$. The following statements are equivalent:
\begin{enumerate}
    \item [\textup{(a)}]
$I_{c}(G)^{(2)}$ is componentwise linear.
     \item [\textup{(b)}]$I_{c}(G)^{(2)}$ has linear quotients.
      \item [\textup{(c)}]$sc(G)\geq3$, or $G\cong K_2\sqcup (n-2)K_1$.
\end{enumerate}
\end{thm}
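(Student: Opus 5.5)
Since linear quotients imply componentwise linearity, (b)$\Rightarrow$(a) holds in general. The plan is therefore to prove (a)$\Rightarrow$(c) by contrapositive and (c)$\Rightarrow$(b) directly. First reduce to the case without isolated vertices. If $A$ is the set of isolated vertices and $H=G-A$, then $I_c(G)^{(2)}={\bf x}_A^2I_c(H)^{(2)}$ by \cite[Proposition 6.3]{fmpoly2025}. Multiplying by a monomial preserves both linear quotients and componentwise linearity, and $sc(G)=sc(H)$. The case $H\cong K_2$ gives a principal ideal, hence is trivially fine. So assume $G$ has no isolated vertices and set $I=I_c(G)$. The description (\ref{sym-comp}) gives $I^{(2)}=\bigcap_{e\notin E(G)}P_e^2\cap\bigcap_{T}P_T^2$.

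Next compute the generators of $I^{(2)}$. A monomial ${\bf x}^{\bf a}$ lies in $I^{(2)}$ iff $a_i+a_j\ge2$ for every non-edge $\{i,j\}$ and $a_i+a_j+a_\ell\ge2$ for every triangle. I expect the minimal generators to be of two kinds. The first kind is the products $uv$ with $u,v\in\mathcal G(I)$, of degree $2n-4$. The second kind is squarefree monomials ${\bf x}_{[n]\setminus F}$, where $F$ contains no non-edge, so $F$ is a clique, and contains no triangle; hence $F$ is an edge or a vertex (or empty), and of course ${\bf x}_{[n]\setminus\{i\}}$ also arises. More precisely, ${\bf x}_{[n]\setminus\{i\}}\in I^{(2)}$ always, since each prime has height $\ge2$. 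One should also check whether mixed monomials like $x_i^2\,{\bf x}_{[n]\setminus\{i,j,\ell\}}$ appear. Because exponents $\ge2$ are only forced by vertices adjacent in $G^c$ to removed vertices, I would determine $\mathcal G(I^{(2)})$ carefully as the set of all ${\bf x}^{\bf a}$ with $a_i\le 2$ that are minimal for the conditions above. In degree $n-1$ the generators are exactly ${\bf x}_{[n]\setminus\{i\}}$, $i\in[n]$, i.e.\ $I^{(2)}_{\langle n-1\rangle}$ is the squarefree Veronese of degree $n-1$, which is linear.

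For (a)$\Rightarrow$(c), suppose $sc(G)=2$ with $G\not\cong K_2$ (after the reduction), so $G$ has a component $\{u,v\}$ that is a single edge and other vertices exist. I would exhibit a failure of linearity of $I^{(2)}_{\langle d\rangle}$ for $d=n-1$ or $d=n$. One option is to show that $I^{(2)}_{\langle n\rangle}$ has a nonlinear syzygy. The more natural option is to use Lemma~\ref{hyperlemma}-type local cohomology or a direct Betti number: because $u,v$ are joined to everything else by non-edges, a degree-$(2n-4)$ generator such as ${\bf x}_{[n]\setminus\{u,v\}}^2$ should not be reachable from the lower-degree part through linear steps. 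Concretely, I would test the criterion that for componentwise linear $J$, $\reg J$ equals the maximal generator degree. By Corollary~\ref{reg-compare}, $\reg I^{(2)}=2(n-2)+1=2n-3$, while the generators have degree at most $2n-4$. This is a contradiction, so this direction follows quickly from Corollary~\ref{reg-compare}. When $c(G)=1$ there is nothing to prove, since then $sc(G)=n\ge3$.

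For (c)$\Rightarrow$(b), I will order the generators by increasing degree: first the ${\bf x}_{[n]\setminus\{i\}}$, then the ${\bf x}_{[n]\setminus e}$ for edges $e$ not in triangles, then the higher-degree ones, refining lexicographically within each degree. For each generator I must show the colon by earlier generators is generated by variables. For a squarefree generator ${\bf x}_{[n]\setminus F}$, the colon contains $x_j$ for $j\in F$ provided some earlier ${\bf x}_{[n]\setminus F'}$ divides $x_j{\bf x}_{[n]\setminus F}$. For the nonsquarefree products $uv$, the earlier ${\bf x}_{[n]\setminus\{i\}}$ supply many variables, and the key is to show that every earlier generator $w$ has some $x_\ell\in$ colon with $x_\ell\mid w:(uv)$. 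This is where $sc(G)\ge3$ is used: each vertex has a $G$-neighbour path of length $2$ inside its component, producing intermediate generators. This verification is the main obstacle. I would handle it by case analysis on $uv={\bf x}_{[n]\setminus e}{\bf x}_{[n]\setminus e'}$ according to whether $e,e'$ share a vertex, lie in a common triangle, or are disjoint.
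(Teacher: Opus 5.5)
Your reduction to the case without isolated vertices is correct, and so is (b)$\Rightarrow$(a). Your idea for (a)$\Rightarrow$(c) is also the paper's: componentwise linearity forces $\reg I^{(2)}$ to equal the top generator degree, while Corollary~\ref{reg-compare} gives $\reg I^{(2)}=2n-3$ when $c(G)>1$ and $sc(G)=2$.

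\textbf{The gap: your list of generators is wrong.} Your description of $\mathcal{G}(I^{(2)})$, on which (c)$\Rightarrow$(b) is built, fails in several ways.
\begin{itemize}
\item The claim that ${\bf x}_{[n]\setminus\{i\}}\in I^{(2)}$ always holds is false. If $\{i,\ell\}\notin E(G)$, this monomial has degree $1$ in $x_i,x_\ell$. So it lies in $P_{\{i,\ell\}}$ but not in $P_{\{i,\ell\}}^2$; height $\ge 2$ gives membership in $P$, not in $P^2$.
\item In general, ${\bf x}_{[n]\setminus F}\in I^{(2)}$ requires $F$ to be \emph{disjoint from} every non-edge, not merely to contain none, and to meet every triangle in at most one vertex. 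This forces $F=\emptyset$, or $F=\{s\}$ with $s$ a cone vertex ($N_G[s]=[n]$).
\item Hence the monomials ${\bf x}_{[n]\setminus e}$, $e\in E(G)$, that you place second in your order are not in $I^{(2)}$ at all. Your description of $I^{(2)}_{\langle n-1\rangle}$ also fails in general.
\item Products of two generators of $I$ are usually not minimal. The product $m_{ij}m_{i\ell}$ is divisible by $h_i$ below. For disjoint edges, $m_{ij}m_{k\ell}$ is divisible by ${\bf x}_{[n]}$ or by ${\bf x}_{[n]\setminus\{s\}}$ for a cone vertex $s$.
\end{itemize}

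\textbf{The correct list.} The paper observes that a minimal generator ${\bf x}^{\bf a}$ has $a_i\in\{0,1,2\}$ and at most two zero entries, since any three vertices contain a non-edge or form a triangle. Analysing the number of zeros then gives, with $m_{ij}={\bf x}_{[n]\setminus\{i,j\}}$ and $h_i={\bf x}_{N_G(i)}{\bf x}_{[n]\setminus N_G[i]}^2$,
\[
\mathcal{G}(I^{(2)})=\{m_{ij}^2:\{i,j\}\in E(G)\}\cup\{h_i:\deg_G(i)\ge 2\}\cup F,
\]
where $F=\{{\bf x}_{[n]}\}$ if $G$ has no cone vertex and $F=\emptyset$ otherwise. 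The $h_i$ are exactly the mixed monomials you flagged but never determined.

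With this list the linear-quotient check is short. Put ${\bf x}_{[n]}$ (or $h_s$ for a cone vertex $s$) first, then the $h_i$, then the $m_{j\ell}^2$.
\begin{itemize}
\item For two $h$'s: ${\bf x}_{[n]}:h_j=x_j$ (resp.\ $h_s:h_j=x_j$), and $x_j$ divides $h_i:h_j$.
\item For $m_{j\ell}^2$: this is the only place $sc(G)\ge 3$ enters. Every edge has an endpoint of degree at least $2$. For each earlier generator $u$ you can label the edge $\{j,\ell\}$ so that $\deg_G(j)\ge 2$ and $x_\ell\mid u:m_{j\ell}^2$. Then $h_j$ precedes $m_{j\ell}^2$ and $h_j:m_{j\ell}^2=x_\ell$.
\end{itemize}
Your planned case analysis on products $uv$ targets the wrong set and is not carried out, so (c)$\Rightarrow$(b) is unproved as it stands.

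\textbf{The same correction is needed in (a)$\Rightarrow$(c).} The bound that all minimal generators have degree at most $2n-4$ is true, but your justification rests on the incorrect list. The point is also delicate. For a leaf $i$ with neighbour $j$, the element $h_i\in I^{(2)}$ has degree exactly $2n-3=\reg I^{(2)}$. It is excluded only because $h_i=x_jm_{ij}^2$ is not a minimal generator.
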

\begin{proof}
(c)  $\Rightarrow$ (b) Let $A$ be the set of isolated vertices of $G$. If $G\cong K_2\sqcup (n-2)K_1$, then $I_c(G)=({\bf x}_A)$ is a principal ideal and so $I_{c}(G)^{(2)}=({\bf x}_A^2)$ has linear quotients. Now, assume that $sc(G)\geq 3$. Let $H=G-A$. The equality $I_{c}(G)^{(2)}={\bf x}_A^2 I_{c}(H)^{(2)}$ shows that $I_{c}(G)^{(2)}$ has linear quotients, if and only if $I_{c}(H)^{(2)}$ has linear quotients. Also clearly $sc(G)=sc(H)$. Thus we may assume that $G$ has no isolated vertices.
We set $I=I_{c}(G)$, and call a vertex $v\in V(G)$ a cone vertex , if $N_G[v]=V(G)$. From Theorem \ref{min prime}, we see that ${\bf x}^{\bf a}=x_{1}^{a_{1}}\cdots x_{n}^{a_{n}}\in I^{(2)}$ if and only if $a_{i}+a_{j}\geq2$ for each $\{i,j\}\notin E(G)$ and $a_{i}+a_{j}+a_{\ell}\geq2$ for each $\{i,j,\ell\}\in\mathcal{T}(G)$. Set $m_{ij}={\bf x}_{[n]\setminus\{i,j\}}$ for each $\{i,j\}\in E(G)$, $w_{i}={\bf x}_{[n]\setminus N_{G}[i]}$ and $h_{i}={\bf x}_{N_{G}(i)}w_{i}^{2}$. We first prove 
$$\mathcal{G}(I^{(2)})=\{m_{ij}^{2}\,\,: \{i,j\}\in E(G)\}\sqcup\{h_{i}\,\,: \deg_{G}(i)\geq2\}\sqcup F,$$
where $F=\{{\bf x}_{[n]}\}$ if $G$ has no cone vertex, and $F=\emptyset$, otherwise. 
Fix a minimal monomial generator ${\bf x}^{\bf a}=x_{1}^{a_{1}}\cdots x_{n}^{a_{n}}\in\mathcal{G}(I^{(2)})$. We have $a_{i}\in\{0,1,2\}$ for each $i$. Set $A=\{i\,\,: a_{i}=0\}$. It is easily seen that $|A|\leq 2$. First, suppose $|A|=2$ and write $A=\{i,j\}$. By the assumption that $a_{i}+a_{j}\geq2$ for each $\{i,j\}\notin E(G)$, we have $\{i,j\}\in E(G)$. Fix a vertex $\ell$ of $G$ with $\ell\notin\{i,j\}$. If $i,j\in N_{G}(\ell)$, then $\{i,j,\ell\}\in\mathcal{T}(G)$, which implies that $2\leq a_{i}+a_{j}+a_{\ell}=a_{\ell}$. Also, if $i\notin N_{G}(\ell)$ or $j\notin N_{G}(\ell)$, then we have either $2\leq a_{i}+a_{\ell}=a_{\ell}$ or $2\leq a_{j}+a_{\ell}=a_{\ell}$. Hence, we get $a_{\ell}=2$ for all $\ell\notin\{i,j\}$. It follows that  
$${\bf x}^{\bf a}=\prod_{\ell\notin\{i,j\}}x_{\ell}^{2}=({\bf x}_{[n]\setminus\{i,j\}})^{2}=m_{ij}^{2}.$$
Next, suppose $|A|=1$ and write $A=\{i\}$. Fix a vertex $j$ of $G$ with $j\neq i$. If $j\notin N_{G}(i)$, by the assumption that $a_{i}+a_{j}\geq2$, we have $a_{j}=2$. On the other hand, if $j\in N_{G}(i)$, then, by the minimality of ${\bf x}^{\bf a}$, we have $a_{j}=1$. 
Hence, we get ${\bf x}^{\bf a}={\bf x}_{N_{G}(i)}{\bf x}_{[n]\setminus N_{G}[i]}^{2}=h_{i}$. Suppose on the contrary that $\deg_{G}(i)=1$, and let $N_{G}(i)=\{j\}$. Then ${\bf x}^{\bf a}=h_{i}=x_jm_{ij}^{2}$ is divided by $m_{ij}^{2}$, which contradicts to the minimality of ${\bf x}^{\bf a}$. So  $\deg_{G}(i)\geq 2$, as desired. Finally, suppose $|A|=0$. In this case, by the minimality of ${\bf x}^{\bf a}$, we have $a_{i}=1$ for all $i$. Hence, ${\bf x}^{\bf a}={\bf x}_{[n]}$. If there exists a cone vertex $i$, since $w_{i}={\bf x}_{[n]\setminus N_{G}[i]}=1$, $h_{i}={\bf x}_{[n]\setminus\{i\}}$ divides ${\bf x}_{[n]}$, which implies that ${\bf x}_{[n]}$ is not a minimal generator of $\mathcal{G}(I^{(2)})$. 

Conversely, choose an element $u\in \{m_{ij}^{2}\,\,: \{i,j\}\in E(G)\}\cup\{h_{i}\,\,: \deg_{G}(i)\geq2\}\cup F$.
If $u=m_{ij}^{2}$, then it is easy to see from the definition that $u\in \mathcal{G}(I^{(2)})$. Let $u=h_i$ for some $i$ with $\deg_{G}(i)\geq2$, and write $h_i={\bf x}^{\bf a}$. Then $a_i=0$, $a_\ell=2$ for any $\ell\notin N_{G}[i]$ and $a_\ell=1$ for any $\ell\in N_{G}(i)$. Fix $\ell\notin N_{G}[i]$. Then $a_{i}+(a_{\ell}-1)=1<2$. This together with $\{i,\ell\}\notin E(G)$ implies that ${\bf x}^{{\bf a}-{\bf e}_{\ell}}\notin I^{(2)}$. 
Now, consider a vertex $\ell\in N_{G}(i)$.
Since $\deg_{G}(i)\geq2$, there exists $r\in N_{G}(i)$ with $r\neq \ell$.  If $\{\ell,r\}\notin E(G)$, then we have $(a_{\ell}-1)+a_{r}=1<2$, and if if $\{\ell,r\}\in E(G)$, then we have $a_{i}+(a_{\ell}-1)+a_{r}=1<2$. This shows that 
 ${\bf x}^{{\bf a}-{\bf e}_{\ell}}\notin I^{(2)}$. Thus we have shown that $h_{i}\in\mathcal{G}(I^{(2)})$.
Suppose that $G$ has no cone vertex, and $u={\bf x}_{[n]}$. In this case, for each vertex $i$, there exists a vertex $j$ such that $\{i,j\}\notin E(G)$. Then
${\bf x}_{[n]\setminus\{i\}}\notin I^{(2)}$, as $a_i+a_j=1<2$, and hence ${\bf x}_{[n]}$ is a minimal monomial generator of $I^{(2)}$, as desired. We now prove $I^{(2)}$ has linear quotients by considering the following two cases: 

\textbf{Case 1.} Suppose that $G$ has no cone vertex. Consider any total order $>$ on the elements of $\mathcal{G}(I^{(2)})$, satisfying ${\bf x}_{[n]}>h_{i}>m_{j,\ell}^{2}$ for any $i$, $j$ and $\ell$. We show that such order $>$ is an admissible order. To this aim let $u,v\in \mathcal{G}(I^{(2)})$ with $u>v$.
First assume that $u={\bf x}_{[n]}$. If $v=h_i$, then $u:v=x_i$, and we are done. Let $v=m_{j,\ell}^{2}$ for some $\{j,\ell\}\in E(G)$. By the assumption that $sc(G)\geq 3$, for every edge $\{i,j\}\in E(G)$, we have either $\deg_{G}(i)\geq2$ or $\deg_{G}(j)\geq2$.  Let $\deg_G(j)\geq 2$. Then $h_j\in \mathcal{G}(I^{(2)})$ and $h_j:v=x_\ell$. Since $x_\ell| u:v$, we are done in this case. Now, assume that $u=h_i$ and $v=h_j$ for some $i$ and $j$. Then $x_j| u:v$ and ${\bf x}_{[n]}:v=x_j$.  Next, suppose that $u=h_i$ and $v=m_{j,\ell}^{2}$. If $i\in\{j,\ell\}$, say $i=j$, then $u:v=h_j:m_{j,\ell}^{2}=x_\ell$. Now, let  $i\notin\{j,\ell\}$. As discussed above, we may assume that $\deg_G(j)\geq 2$. Then $x_\ell| u:v$ and $h_j:m_{j,\ell}^{2}=x_\ell$. Finally, let $u=m_{i,r}^{2}$ and $v=m_{j,\ell}^{2}$. 
First assume that $\{i,r\}\cap\{j,\ell\}\neq \emptyset$, say $i=j$. Then $\deg_G(i)\geq 2$, $x_\ell| u:v$ and $h_i:v=h_i:m_{i,\ell}^{2}=x_\ell$. Now, suppose $\{i,r\}$ and $\{j,\ell\}$ are disjoint. We may assume that $\deg_G(j)\geq 2$. Then $x_\ell| u:v$ and $h_j:v=x_\ell$. 

\medskip

\textbf{Case 2.} Suppose that $G$ has a cone vertex $s$. Then $h_s={\bf x}_{[n]\setminus \{s\}}$. Let $>$ be a total order on the elements of $\mathcal{G}(I^{(2)})$, satisfying $h_s>h_{i}>m_{j,\ell}^{2}$ for any $i$, $j$ and $\ell$. To show that $>$ is admissible, we consider $u,v\in \mathcal{G}(I^{(2)})$ with $u>v$. If $u=h_s$ and $v=h_i$ for some $i\neq s$, then $u:v=x_i$. Now, let $u=h_s$ and $v=m_{j,\ell}^{2}$. If $s\in\{j,\ell\}$, say $s=j$, then $u:v=h_s:m_{s,\ell}^{2}=x_\ell$. Otherwise, by assumption,  
we may assume that $\deg_G(j)\geq 2$ and hence $h_j\in \mathcal{G}(I^{(2)})$ with $h_j:v=x_\ell$ and $x_\ell|u:v$.   
Now, assume that $u=h_i$ and $v=h_j$ for some $i,j\neq s$. Then $x_j| u:v$ and $h_s:v=x_j$. The cases where $u=h_i$ for some $i\neq s$ and $v=m_{j,\ell}^{2}$, and where $u=m_{i,r}^{2}$ and $v=m_{j,\ell}^{2}$. are treated exactly as in Case 1. This completes the proof. 

\smallskip

(b) $\Rightarrow$ (a) holds for any graded ideal, see \cite[Theorem 8.2.15]{HHBook}. 

(a) $\Rightarrow$ (c) As before, let $A$ be the set of isolated vertices of $G$ and $H=G-A$. The characterization of minimal monomial generators of $I_c(H)^{(2)}$ given in the proof of (c) $\Rightarrow$ (b) shows that the highest degree of a generator in a minimal set of
generators of $I_c(H)^{(2)}$ is equal to $2(|V(H)|-2)$. The equality $I_{c}(G)^{(2)}={\bf x}_A^2 I_{c}(H)^{(2)}$ shows that the corresponding highest degree for $I_c(G)^{(2)}$ is $2|A|+2(|V(H)|-2)=2(n-2)$. Thus \cite[Corollary 8.2.14]{HHBook} gives $\reg I_c(G)^{(2)}=2(n-2)$. Hence, Corollary~\ref{reg-compare} yields that either $c(G)=1$, or $c(G)>1$ and $sc(G)\geq3$. This means that either $sc(G)\geq3$, or $c(G)=1$ and $sc(G)=2$. The latter case means $G\cong K_2\sqcup (n-2)K_1$.
\end{proof}

Using Theorem \ref{second-power}, we completely determine the depth of second symbolic powers of complementary edge ideals.

\begin{thm}\label{PowerTwo}
Let $G$ be a graph on the vertex set $[n]$ with $n\geq 3$, and let $A$ be the set of isolated vertices of $G$. Then we have 
\[
\depth S/I_{c}(G)^{(2)}=
\begin{cases}
n-1, & \text{if $G-A\cong K_2$},\\
n-2, & \text{if each component of } G-A \text{ is a star, and $G-A\not\cong K_2$}, \\
n-3, & \text{otherwise}. 
\end{cases}
\]
\end{thm}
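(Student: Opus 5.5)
The plan is to reduce to the case without isolated vertices and then read everything off Lemma~\ref{hyperlemma} in the special case $k=2$. If $G-A\cong K_2$, then $I_c(G)=({\bf x}_A)$ is principal, so $I_c(G)^{(2)}=({\bf x}_A^2)$ and $\depth S/I_c(G)^{(2)}=n-1$. Otherwise, by (\ref{depth-isolated}), it suffices to treat $H=G-A$ on $m=n-|A|\geq 3$ vertices without isolated vertices. We must show that $\depth S/I_c(H)^{(2)}$ equals $m-2$ if every component of $H$ is a star, and $m-3$ otherwise. So from now on assume $G$ has no isolated vertices and $n\geq3$.

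\emph{Upper bounds.} If $G\neq K_n$, some nonedge gives a height-two minimal prime by Theorem~\ref{min prime}, so $\dim S/I^{(2)}=n-2$ and $\depth\le n-2$. If $G=K_n$, all minimal primes are triangle primes of height $3$, so $\depth\le n-3$; this is consistent with the claim, since $K_n$ with $n\ge3$ is not a star. To get $\depth\le n-3$ when some component is not a star, I will exhibit ${\bf a}$ with $H^{n-3}_{\mathfrak m}(S/I^{(2)})_{\bf a}\neq0$. By Lemma~\ref{hyperlemma} this amounts to $\widetilde H^{1}(\Delta(\mathcal H^{(2)}_{\bf a}(G)))\neq0$.
\begin{itemize}
\item Take ${\bf a}={\bf 0}$. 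All nonedges and all triangles of $G$ are then edges of $\mathcal H^{(2)}_{\bf 0}$. Hence $\Delta(\mathcal H^{(2)}_{\bf 0})$ is exactly the graph $G$ regarded as a $1$-dimensional complex, and $\widetilde H^1\neq0$ precisely when $G$ contains a cycle (triangles included).
\item If $G$ is a forest with a non-star component, that component contains an induced $P_4$. Here I will choose ${\bf a}$ with negative entries outside a suitable vertex set $U$ and with some entries equal to $2$. An entry $a_i=2$ removes the nonedges $\{i,j\}$ with $a_j\ge0$ from $\mathcal H_{\bf a}$, which adds new segments to the complex and lets me close a cycle through the $P_4$.
\end{itemize}
Finding this ${\bf a}$ is a small finite search, which I would first carry out on $P_4$ itself and then transport to a general forest.

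\emph{Lower bounds.} First I show $\depth\ge n-3$ for every $G$. By Lemma~\ref{hyperlemma} this means $\widetilde H^{j}(\Delta(\mathcal H^{(2)}_{\bf a}))=0$ for all $j\ge2$ and all ${\bf a}$. The key observation is that for $k=2$ an edge condition $\sum a\le1$ forces all but at most one of the involved coordinates to be $0$. Consequently a face of $\Delta(\mathcal H_{\bf a})$ must be a clique of $G$ among the vertices with $a_i=0$, and such a clique contains no triangle, since triangles of $G$ with zero weight are edges of $\mathcal H_{\bf a}$. Faces can be larger only through vertices with $a_i\ge1$ (respectively $a_i\ge2$). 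I will analyse these complexes as a cone or join over the vertex set $Z=\{i:a_i=0\}$, and expect to show they are homotopy equivalent to complexes of dimension at most $1$.

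If this direct analysis gets messy, the fallback for graphs with $sc(G)\ge3$ is Theorem~\ref{second-power}. There $I^{(2)}$ has linear quotients with the explicit order constructed in its proof, so $\pd S/I^{(2)}=1+\max_u|\set(u)|$. Computing $\set(h_i)$ and $\set(m_{ij}^2)$ from the colon computations in that proof then gives the depth directly. This still leaves the case of a $K_2$ component with $c(G)\ge2$, which would need the Takayama argument anyway.

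For the star case I need $\depth\ge n-2$, i.e. $\widetilde H^{1}(\Delta(\mathcal H_{\bf a}))=0$ for every ${\bf a}$ with $U\ne\emptyset$. When $G$ is a disjoint union of stars (not $K_2$), it has no triangles and its nonedges are plentiful. I expect every such complex to be a forest, or a cone over one: an entry $a_i\ge2$ makes $i$ adjacent to all vertices with $a_j=0$, while the $0$-weighted part is a union of stars. The main obstacle I anticipate is exactly this uniform case analysis over ${\bf a}$, both here and in the general $\depth\ge n-3$ bound, namely controlling how vertices with $a_i\in\{1,2,\ldots\}$ and the removed set $\supp_-{\bf a}$ change the homotopy type.
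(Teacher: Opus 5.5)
Your reduction to the case without isolated vertices is correct. So is the upper bound via ${\bf a}={\bf 0}$: there $\Delta(\mathcal{H}^{(2)}_{\bf 0}(G))$ is the graph $G$ itself, so $\widetilde H^1\neq0$ whenever $G$ has a cycle. However, your mechanism for the forest case cannot work. For $k=2$ every hyperedge of $\mathcal{H}^{(2)}_{\bf a}(G)$ has total weight at most $1$. So a vertex $i\in U$ with $a_i\geq 2$ lies in no hyperedge, $\Delta(\mathcal{H}^{(2)}_{\bf a}(G))$ is a cone with apex $i$, and all its reduced cohomology vanishes. Entries equal to $2$ therefore never produce the class you need. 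Weight $1$ works instead. For an induced path $v_1v_2v_3v_4$, set $a_{v_1}=a_{v_4}=1$, $a_{v_2}=a_{v_3}=0$, and $a_v=-1$ otherwise. Then $U=\{v_1,\dots,v_4\}$ and the only hyperedges are $\{v_1,v_3\}$ and $\{v_2,v_4\}$. Hence $\Delta(\mathcal{H}^{(2)}_{\bf a}(G))$ is the $4$-cycle $v_1v_2v_3v_4$, and Lemma~\ref{hyperlemma} gives $H^{n-3}_{\mathfrak m}(S/I_c(G)^{(2)})_{\bf a}\neq0$.

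The more serious gap is that the lower bounds, which are the real content of the theorem, are only announced. Your observation that a face meets $Z=\{i:a_i=0\}$ in a clique of size at most $2$ says nothing about faces through vertices of positive weight, and those are exactly what could create homology. One way to close this within your framework is as follows.

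\emph{The bound $\depth\geq n-3$.} The argument of Lemma~\ref{deletion-lemma}(a), which is valid for every $k$, shows that the link of a weight-zero vertex is the clique complex of a split graph. So deleting all weight-zero vertices does not change $\widetilde H_i$ for $i\geq2$. For $k=2$ the remaining vertices, all of weight at least $1$, contain no hyperedge, so they span a simplex.

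\emph{The star case.} If some $a_i\geq2$, the complex is a cone. Otherwise $\Delta$ is the flag complex of $G|_U$ with $O=\{i:a_i=1\}$ made into a clique. Start from the simplex on $O$, attach the weight-zero centres, then the weight-zero leaves. Each new vertex has as link a simplex, a point, or the empty set. So $\Delta$ is homotopy equivalent to a finite set of points and $\widetilde H^1=0$.

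Your fallback via Theorem~\ref{second-power} is precisely the paper's argument when $sc(G)\geq3$, but it leaves $sc(G)=2$ open. The paper does not treat that case by Takayama's formula. It uses induction: write $H=L\sqcup K_2$ with the $K_2$ on $\{1,2\}$. Then $I_c(H)^{(2)}:x_1^2x_2^2=I_c(L)^{(2)}$ and $(I_c(H)^{(2)},x_1^2x_2^2)=({\bf x}_{[m]\setminus\{1,2\}},x_1x_2)^2$, which is Cohen--Macaulay of dimension $m-2$, and a depth lemma finishes the step. Completed as above, your route would be a uniform combinatorial proof that avoids both linear quotients and this induction.
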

\begin{proof}
Let $H=G-A$. If $H\cong K_2$, then $I_{c}(G)^{(2)}=({\bf x}_A^2)$ and so $\depth S/I_{c}(G)^{(2)}=n-1$, as desired. Now suppose that $H\not\cong K_2$. Then $\depth S/I_{c}(G)^{(2)}=\depth S'/I_{c}(H)^{(2)}+|A|$, and we need to show that 
\[
\depth S'/I_{c}(H)^{(2)}=
\begin{cases}
|V(H)|-2, & \text{if each component of } H \text{ is a star graph}, \\
|V(H)|-3, & \text{otherwise}. 
\end{cases}
\]
Here, $S'=S/(x_i:\, i\in A)$.
First assume that $sc(G)\geq 3$. Then $sc(H)\geq 3$, and so by Theorem~\ref{second-power}, $I_{c}(H)^{(2)}$ has linear quotients.
Consider the notation and the admissible order for $I_{c}(H)^{(2)}$ given in the proof of Theorem~\ref{second-power}. It is easily seen that $\set(h_i)=\{x_i\}$ for all $i$, and $\set(m_{j,\ell}^{2})=\{x_j,x_\ell\}$,  if $\deg_H(j)\geq 2$ and  $\deg_H(\ell)\geq 2$. Otherwise, $|\set(m_{j,\ell}^{2})|=1$. Clearly, $H$ has an edge $\{j,\ell\}$ with  $\deg_H(j)\geq 2$ and  $\deg_H(\ell)\geq 2$ if and only if some component of $H$ is not a star graph. Hence, by \cite[Corollary 8.2.2]{HHBook}, we have $\pd S'/I_{c}(H)^{(2)}=2$ if all components of $H$ are star graphs and $\pd S'/I_{c}(H)^{(2)}=3$, otherwise. Now, the Auslander-Buchsbaum formula gives the desired result.

Now, assume that $sc(G)=2$. Since $H\not\cong K_2$, we have $H=L\sqcup K_2$ for a non-empty graph $L$ without isolated vertices. If $L\cong K_2$, then $H\cong 2K_2$ and $I_c(H)$ is a complete intersection of height $2$. Thus $\depth S'/I_{c}(H)^{(2)}=|V(H)|-2$, and we are done.
So we may assume that $L\not\cong K_2$. Let $V(H)=[m]$, $V(L)=V(H)\setminus \{1,2\}$ and $S''=K[V(L)]$. By induction on the number of vertices of the graph we may assume that 	
\[
\depth S''/I_{c}(L)^{(2)}=
\begin{cases}
	|V(L)|-2, & \text{if each component of } L \text{ is a star graph}, \\
	|V(L)|-3, & \text{otherwise}. 
\end{cases}
\] 
So it is enough to show that $\depth S'/I_{c}(H)^{(2)}=\depth S''/I_{c}(L)^{(2)}+2$.

By the description of $\mathcal{G}(I_{c}(H)^{(2)})$ given in the proof of Theorem~\ref{second-power}, it is straightforward to check that
$$I_{c}(H)^{(2)}=({\bf x}_{[m]},{\bf x}_{[m]\setminus \{1,2\}}^2)+x_1^2x_2^2 I_{c}(L)^{(2)}.$$
Since ${\bf x}_{[m]\setminus \{1,2\}}\in I_{c}(L)^{(2)}$, we have $(I_{c}(H)^{(2)}:x_1^2x_2^2)=I_{c}(L)^{(2)}$. Thus 
$$\depth S'/ (I_{c}(H)^{(2)}:x_1^2x_2^2)=\depth S'/ I_{c}(L)^{(2)}=\depth S''/I_{c}(L)^{(2)}+2\leq m-2.$$
Moreover, $(I_{c}(H)^{(2)},x_1^2x_2^2)=({\bf x}_{[m]\setminus \{1,2\}},x_1x_2)^2$. Since ${\bf x}_{[m]\setminus \{1,2\}},x_1x_2$ forms a regular $S'$-sequence, we obtain that $S'/ (I_{c}(H)^{(2)},x_1^2x_2^2)$ is Cohen-Macaulay, and hence 

$$\depth S'/ (I_{c}(H)^{(2)},x_1^2x_2^2)=\dim S'/ ({\bf x}_{[m]\setminus \{1,2\}},x_1x_2)^2=m-2.$$
Thus $\depth S'/ (I_{c}(H)^{(2)}:x_1^2x_2^2)\leq\depth S'/ (I_{c}(H)^{(2)},x_1^2x_2^2)$. Applying \cite[Theorem 4.3(ii)]{CHHKTT} we conclude that
$$\depth S'/I_{c}(H)^{(2)}=\depth S'/ (I_{c}(H)^{(2)}:x_1^2x_2^2)=\depth S''/I_{c}(L)^{(2)}+2,$$ as desired.
\end{proof}

\begin{cor}\label{asymptotic}
Let $G$ be a graph on the vertex set $[n]$ with $n\geq 3$, and let $A$ be the set of isolated vertices of $G$. The following statements hold. 
\begin{enumerate}
\item [\textup{(a)}] For any $k\geq 1$, we have 
\[
\depth S/I_{c}(G)^{(k)}=
\begin{cases}
n-1, & \text{if $G-A\cong K_2$},\\
n-2, & \text{if $G-A$ is a star, or disjoint union of edges,
and $G-A\not\cong K_2$}.
\end{cases}
\]
\item [\textup{(b)}] If $G-A$ is neither a star graph, nor a disjoint union of edges, then $$\lim_{k\rightarrow\infty}\depth S/I_{c}(G)^{(k)} \leq n-3.$$ Moreover, this bound is sharp.
\end{enumerate}
\end{cor}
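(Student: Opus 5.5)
Throughout, I reduce to $H=G-A$ via (\ref{depth-isolated}). If $H\cong K_2$, then $I_c(G)=({\bf x}_A)$ is principal, so $I_c(G)^{(k)}=({\bf x}_A^k)$ is principal and $\depth S/I_c(G)^{(k)}=n-1$ for all $k$. Otherwise, with $m=|V(H)|$ and $S'$ the polynomial ring on $V(H)$, (\ref{depth-isolated}) gives $\depth S/I_c(G)^{(k)}=\depth S'/I_c(H)^{(k)}+|A|$. It therefore suffices to show that $\depth S'/I_c(H)^{(k)}=m-2$ for every $k$ when $H$ is a star or a disjoint union of edges (other than $K_2$), and that the limiting depth is at most $m-3$ otherwise.

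\textbf{Part (a).} If $H=K_{1,m-1}$ with center $1$, then $H$ has no triangles and $E(H^c)$ consists of all pairs in $\{2,\dots,m\}$. By (\ref{sym-comp}) we get $I_c(H)^{(k)}=\bigcap_{2\le i<j\le m}(x_i,x_j)^k$, which is the $k$-th symbolic power of the ideal of the $1$-skeleton-free complex, i.e.\ of $I_{m-1,2}$, the squarefree Veronese-type ideal whose minimal primes are all height-$2$ primes in $m-1$ variables. Its Stanley--Reisner complex is the $(m-4)$-skeleton of a simplex on $m-1$ vertices, a matroid complex. By the Minh--Trung/Varbaro theorem, all symbolic powers of a matroid ideal are Cohen--Macaulay, so $\depth=\dim=m-2$, and the free variable $x_1$ does not change this count.

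If $H=tK_2$ with $t\ge2$, then again there are no triangles, and the nonedges are all pairs from distinct components. Here I would instead invoke the fact that $I_c(tK_2)$ is generated by the products ${\bf x}_{[m]\setminus e}$. For $t=2$ this is the complete intersection $(x_3x_4,x_1x_2)$, whose symbolic powers equal its ordinary powers and are Cohen--Macaulay. For general $t$, $\Delta(I_c(H))$ has facets $[m]\setminus\{u,v\}$ with $u,v$ in different edges. I would check that this is again a matroid complex: its complement-of-facets are the bases of a partition-type (transversal) matroid. Hence Minh--Trung/Varbaro again gives Cohen--Macaulayness and depth $m-2$. The main obstacle is verifying the matroid exchange property carefully. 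If that fails, I would fall back on Lemma \ref{hyperlemma}, showing $\widetilde{H}$ vanishes below the top degree by exhibiting the independence complex of $\mathcal{H}_{\bf a}^{(k)}$ as a cone or join.

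\textbf{Part (b).} The limit exists by known results on symbolic depth of monomial ideals (it equals $\dim$ minus the analytic spread of the symbolic Rees algebra / it is eventually constant by Nguyen--Trung). For the bound, I would use Corollary \ref{Serre depth Decrease} with $t=2$: for $k\ge 2\bight I-3$ we have $\depth S/I^{(k)}\le\depth S/I^{(2)}$, which is at most $n-3$ by Theorem \ref{PowerTwo} whenever some component is not a star. The remaining case is when all components are stars but $H$ is neither a single star nor a matching, i.e.\ $H$ contains $P_3\sqcup K_2$ as components. There I would exhibit directly, via Lemma \ref{hyperlemma}, a vector ${\bf a}$ and large $k$ with $H^{n-3}_{\mathfrak m}(S/I^{(k)})_{\bf a}\ne0$: choose ${\bf a}$ so that $\Delta(\mathcal{H}_{\bf a}^{(k)})$ has nonzero $\widetilde H^{1}$, e.g.\ an induced $4$-cycle built from the $P_3$ and $K_2$ vertices. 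This step is the delicate one. Sharpness follows from Proposition \ref{prop-lowerbound}-type examples or from the $k=4$ values in Theorem \ref{4th symbolic power} with a graph such as $P_4$, computing the limit to be exactly $n-3$.
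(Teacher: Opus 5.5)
\paragraph{Part (a) and the upper bound in (b).} Your treatment of (a) is valid and differs from the paper's. The paper writes $I_c(G)=J(G^c)$ and applies the ordered-matching formula of Hoa--Kimura--Terai--Trung, $\depth S/J(G^c)^{(k)}=n-\nu_0(G^c)-1$ for $k\ge 2\nu_0(G^c)-1$, after checking that $\nu_0(G^c)=1$. You instead observe that $\Delta(I_c(H))$ is a matroid complex and invoke Minh--Trung/Varbaro, which gives Cohen--Macaulayness directly.

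The matroid claim for $tK_2$ is true, so no fallback is needed. The complements of the facets are the $2$-sets meeting two different edges. These are the bases of the rank-$2$ truncation of the partition matroid whose blocks are the edges, so $\Delta(I_c(tK_2))$ is the independence complex of the dual matroid.

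The upper bound in (b) is also fine when some component is not a star. Corollary~\ref{almost non-increasing} and Theorem~\ref{PowerTwo} give $\depth S/I^{(k)}\le n-3$ for $k\ge3$, replacing the paper's use of the symbolic analytic spread. (With $t=2$ the threshold in Corollary~\ref{Serre depth Decrease} is $k\ge\bight I$, not $2\bight I-3$, but only large $k$ matter.)

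In the star-forest case your $4$-cycle idea works, but it has to be made explicit and valid for all large $k$. Take the center $c$ and two leaves $p,q$ of a star with at least two leaves, and a vertex $u$ of another component. Set $a_p=a_q=1$, $a_c=0$, $a_u=k-1$, and $a_j=-1$ elsewhere. For $k\ge3$ one gets $E(\mathcal H^{(k)}_{\bf a}(H))=\{\{p,q\},\{c,u\}\}$. Hence $\Delta(\mathcal H^{(k)}_{\bf a}(H))$ is the $4$-cycle $p\,c\,q\,u$, and Lemma~\ref{hyperlemma} gives $H^{m-3}_{\mathfrak m}(S'/I_c(H)^{(k)})_{\bf a}\neq0$.

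\paragraph{The gap: sharpness.} Sharpness requires a graph with $\depth S/I_c(G)^{(k)}\ge n-3$ for all large $k$, that is, a \emph{lower} bound. Nothing you propose supplies one.
\begin{itemize}
\item Proposition~\ref{prop-lowerbound} gives $\depth\ge s+2$ only under the hypothesis $n-s\ge7$. Then $s+2\le n-5$, so it can never certify $n-3$.
\item The value $\depth S/I_c(G)^{(4)}=n-3$ from Theorem~\ref{4th symbolic power} controls later powers only from above, via Corollary~\ref{Serre depth Decrease}, and never from below.
\item Your local-cohomology construction in the star-forest case likewise yields only upper bounds.
\end{itemize}

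The paper obtains sharpness by computing the limit exactly in the star-forest case: $\lim_k\depth=n-\nu_0(H^c)-1$ with $\nu_0(H^c)=2$. A quicker fix is $G=K_n$. Here $I_c(K_n)$ is the Stanley--Reisner ideal of a matroid and has height $3$, so $S/I_c(K_n)^{(k)}$ is Cohen--Macaulay of depth $n-3$ for every $k$.

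Your $P_4$ example is in fact sharp, but not for the reason you give. There $n-3=1$, and $\depth S/I^{(k)}\ge1$ because $I^{(k)}$ is an intersection of powers of height-$2$ primes, so $\mathfrak m\notin\operatorname{Ass}(S/I^{(k)})$.
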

\begin{proof}
 (a) If $G-A\cong K_2$, since $n\geq 3$, we have $A\neq \emptyset$. So $I_c(G)^{(k)}=({\bf x}_A^k)$, and the assertion is clear. 
 Now, assume that  $G-A\not\cong K_2$. 
 By (\ref{depth-isolated}), we may reduce to the case that $A=\emptyset$.
 Since $G$ is triangle-free and has no isolated vertices, we have $I_c(G)=J(G^c)$. Thus by \cite[Theorem 3.4]{hktt2017}, we have $$\depth S/I_{c}(G)^{(k)}=\depth S/J(G^c)^{(k)}=n-\nu_0(G^c)-1$$ for all $k\geq 2\nu_0(G^c)-1$, where $\nu_0(G^c)$ denotes the ordered matching number of $G^c$.
First assume that $G=K_{1,n-1}$ is a star graph. Then $G^c$ consists of
$K_{n-1}$ together with an isolated vertex. 
It is easily seen that no two edges form an ordered matching in $K_{n-1}$. Hence, $\nu_0(G^c)=1$. Now, let $G$ be the disjoint union of $s$ edges $\{u_i,v_i\}$ for $1\leq i\leq s$. Suppose by contradiction that two edges $e=\{a_1,b_1\}$ and $e'=\{a_2,b_2\}$  form an ordered matching in $G^c$, where $\{a_1,a_2\}$ is an independent set of $G^c$. Since $G^c=K_{2,\ldots,2}$ is a complete multipartite graph with parts $\{u_i,v_i\}$, we have $\{a_1,a_2\}=\{u_i,v_i\}$ for some $i$. Then $b_1$ lives in a different part and $\{a_2,b_1\}\in E(G^c)$ which contradicts to $e$ and $e'$ being an ordered matching. Hence, $\nu_0(G^c)=1$,
which yields the desired equality.

 (b) First assume that some connected component of $G$ is not a star graph. Then
by Theorem \ref{PowerTwo}, $\depth S/I_{c}(G)^{(2)}=n-3$. Using this and  \cite[Theorem 2.4(1)]{hktt2017} we get  $\ell_s(I_{c}(G))\geq 3$. So by \cite[Theorem 2.4(2)]{hktt2017} we have
 $\depth S/I_{c}(G)^{(k)}=n-\ell_s(I_{c}(G))\leq n-3$ for $k\gg0$.
 
Now, assume that each connected component of $G$ is a star graph. Set $H=G-A$. Since $H$ is triangle-free and has no isolated vertices, we have $I_c(H)=J(H^c)$. Moreover, since $H\not\cong K_2$, we have $\depth S/I_{c}(G)^{(k)}=\depth S'/J(H^c)^{(k)}+|A|$. This together with \cite[Theorem 3.4]{hktt2017}, gives $\lim_{k\rightarrow\infty}\depth S/I_{c}(G)^{(k)}=n-\nu_0(H^c)-1$. By assumption $H$ has at least two connected components, and at least one of its component, say $H_1=K_{1,d}$, is not $K_2$, that is $d\geq 2$.  Let $a_2$ be the center and $a_1,b_1$ be leaves of $H_1$. Pick a vertex $b_2$ in a connected component of $H$ different from $H_1$. Then it is easily seen that the edges $\{a_1,b_1\}$ and $\{a_2,b_2\}$ of $H^c$ form an ordered matching of $H^c$.    
 Thus $\nu_0(H^c)\geq 2$. 
 Since any ordered matching of $H^c$ of size $r$ gives an independent set of $H^c$ of size $r$ and the independence  number of $H^c$ is $2$, it follows that $\nu_0(H^c)\leq 2$. This gives the desired result.
\end{proof}

In light of Theorem~\ref{second-power}, it is natural to ask whether the higher
symbolic powers of $I_c(G)$ also have linear quotients. The following example
shows that this is not the case in general, even if $G$ is connected. 

\begin{ex}
Let $G$ be the graph on $[6]$ with $$E(G)=\{\{1,2\},\{1,3\},\{2,3\},\{1,4\},\{4,5\},\{4,6\},\{5,6\}\}.$$ Using Macaulay2, we can verify that $I_{c}(G)^{(3)}$ does not have linear quotients.
\end{ex}
 
It was shown in \cite[Proposition~5.5]{fmm2026} that the symbolic depth function
$k \mapsto \depth S/I_c(G)^{(k)}$ need not be non-increasing and monotonicity fails for a
suitable complete multipartite graph $G$. Corollary~\ref{comparison of 3rd and 4th} shows that
the same phenomenon already occurs and $\depth S/I_c(G)^{(3)}<\depth S/I_c(G)^{(4)}$ for graphs $G$ with $\reg I(G^c)>3$. Nevertheless,
Corollary~\ref{Serre depth Decrease} extends the range of exponents for an arbitrary squarefree monomial ideal,  beyond that covered by
\cite[Theorem~2.7]{nt2019}. We first recall the latter result of
Nguyen and N.V.~Trung.

\begin{thm}[\cite{nt2019}, Theorem 2.7]
Let $I$ be a squarefree monomial ideal. For all $t\geq1$, we have $$\depth S/I^{(t)}\geq\depth S/I^{(k)}\mbox{ for all } k\in E(t),$$
where $E(t)=\bigcup_{i\geq1}\{k\in\mathbb{N}\,\,: i(t-1)+1\leq k\leq it\}$. 
\end{thm}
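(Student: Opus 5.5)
The plan is to compare multigraded pieces of local cohomology through Takayama's formula (Theorem~\ref{thm:Takayama}). For every $\mathbf a$ with $H^j_{\mathfrak m}(S/I^{(t)})_{\mathbf a}\neq 0$ I will produce a $\mathbf b$ with
$$\Delta_{\mathbf b}(I^{(k)})=\Delta_{\mathbf a}(I^{(t)})\quad\text{and}\quad \supp_-\mathbf b=\supp_-\mathbf a .$$
Then Takayama's formula gives $H^j_{\mathfrak m}(S/I^{(k)})_{\mathbf b}\cong H^j_{\mathfrak m}(S/I^{(t)})_{\mathbf a}$. Taking $j=\depth S/I^{(t)}$ shows $H^j_{\mathfrak m}(S/I^{(k)})\neq0$, hence $\depth S/I^{(k)}\le \depth S/I^{(t)}$.

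Write $I=I_\Delta$ and fix $k$ with $i(t-1)+1\le k\le it$ for some $i\ge1$. The first step is to reduce to nonnegative multidegrees. As in the proof of Lemma~\ref{hyperlemma}, via \cite[Theorem 3.2]{kmt2025}, we have
$$\Delta_{\mathbf a}(I_\Delta^{(k)})=\Delta_{\mathbf a^+}\bigl(I^{(k)}_{\link_\Delta(\supp_-\mathbf a)}\bigr),$$
where $\mathbf a^+$ is the restriction of $\mathbf a$ to $[n]\setminus\supp_-\mathbf a$. Note that the link depends only on $\supp_-\mathbf a$ and not on the exponent. So I set $b_j=a_j$ for $j\in\supp_-\mathbf a$ and $b_j=i\,a_j$ for $j\notin\supp_-\mathbf a$. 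Then $\supp_-\mathbf b=\supp_-\mathbf a$ and $\mathbf b^+=i\,\mathbf a^+$, and both degree complexes live over the same complex $\Gamma=\link_\Delta(\supp_-\mathbf a)$.

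The key step uses Lemma~\ref{deg cpx} applied to $\Gamma$. It says that $\Delta_{\mathbf a^+}(I_\Gamma^{(t)})$ is generated by the facets $F$ of $\Gamma$ with $\sum_{l\notin F}a_l\le t-1$, and $\Delta_{i\mathbf a^+}(I_\Gamma^{(k)})$ by the facets with $i\sum_{l\notin F}a_l\le k-1$. Since the sums are nonnegative integers, the two conditions agree:
\begin{itemize}
\item If $\sum_{l\notin F} a_l\le t-1$, then $i\sum_{l\notin F}a_l\le i(t-1)\le k-1$.
\item If $\sum_{l\notin F} a_l\ge t$, then $i\sum_{l\notin F}a_l\ge it\ge k>k-1$.
\end{itemize}
So the two sets of facets coincide, the degree complexes are equal, and Takayama's formula finishes the argument, since the shift $|\supp_-|$ is the same for $\mathbf a$ and $\mathbf b$.

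The main point needing care is the reduction from $\mathbf a\in\mathbb Z^n$ to the link. One must check that Lemma~\ref{deg cpx} applies to the link complex with the restricted vector. One must also handle the degenerate case where some face condition in Takayama's definition makes $\Delta_{\mathbf a}$ the void complex. In that case the cohomology vanishes and such $\mathbf a$ never witnesses depth, so only nonvoid complexes matter, and for those the facet description above is valid. The arithmetic itself, namely that scaling by $i$ sends the threshold $t-1$ into the window $[i(t-1),\,it-1]\ni k-1$, is exactly where the hypothesis $k\in E(t)$ is used.
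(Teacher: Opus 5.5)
Your proof is correct. The paper only cites this result from \cite{nt2019} and does not reprove it, but your argument follows the same scheme as its proof of Theorem~\ref{ineq of dual mod}: reduce to $\Gamma=\link_\Delta(\supp_-\mathbf a)$ via \cite{kmt2025}, then build $\mathbf b$ with $\supp_-\mathbf b=\supp_-\mathbf a$ and $\Delta_{\mathbf b}(I^{(k)})=\Delta_{\mathbf a}(I^{(t)})$ using Lemma~\ref{deg cpx}.

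The one genuine difference is the rescaling.
\begin{itemize}
\item You scale by the integer $i$, so for $k\in E(t)$ the thresholds $t-1$ and $k-1$ match exactly, with no dependence on $\bight I$.
\item The paper uses $c_j=\lfloor \tfrac{k}{t}a_j^+ + \tfrac{p-1}{p}\rfloor$. This needs $|V\setminus F|\le p=\bight I$ to absorb the rounding error, but in exchange it covers every $k\ge (p-1)t-(p-2)$, including the gaps between the intervals of $E(t)$.
\end{itemize}

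Since you preserve $\supp_-$, your argument also gives the stronger comparison $\dim K^{j}_{S/I^{(t)}}\le \dim K^{j}_{S/I^{(k)}}$ for $k\in E(t)$, and not just the depth inequality.
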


Recall that the {\em big height} of an ideal $I\subset S$ is defined as
$\bight I=\max\{\height P: P\in \Min(I)\}$.
The following description of the Hilbert series of the module $K_{S/I^{(k)}}^{i}$ will be used in the proof of Theorem~\ref{ineq of dual mod}. 

\begin{cor}[\cite{kmt2025}, Corollary 3.3] \label{dual}
Let $\Delta$ be a simplicial complex and $k\geq1$. Then the Hilbert series of $K^{i}_{S/I_{\Delta}^{(k)}}$ is given by 
\begin{eqnarray*}
F\big(K^{i}_{S/I_{\Delta}^{(k)}},t\big)
& = & \sum_j \dim _{k}\big[K^{i}_{S/I_{\Delta}^{(k)}}\big]_{j}\ t^{j}\\[-0.2cm]
& = & \sum_{F \in \Delta} \big(\sum_{{\bf a}\in \mathbb{N}^{[n]\setminus F}} \dim_k \widetilde{H}_{i-|F|-1}\big(\Delta_{{\bf a}}(I_{{\rm link}_ {\Delta}F}^{(k)});k\big)t^{-|{\bf a}|}\left(\frac{t}{1-t}\right)^{|F|}\big),
\end{eqnarray*}
where $ \mathbb{N}^{V({\rm link}_{\Delta}F)}\hookrightarrow  \mathbb{N}^{V(\Delta)}$ is induced by $ V({\rm link}_{\Delta}F)\hookrightarrow  V(\Delta)$. 
\end{cor}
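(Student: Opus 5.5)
The plan is to compute the Hilbert series of $K^{i}_{S/I_{\Delta}^{(k)}}$ one multidegree at a time. We use Takayama's formula (Theorem~\ref{thm:Takayama}) and then group the multidegrees ${\bf a}\in\mathbb{Z}^n$ by their negative support $F=\supp_-({\bf a})$.

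Since $K^i_M=\Hom_K(H^i_{\mathfrak m}(M),K)$ is the graded $K$-dual, we have $\dim_K[K^i_M]_j=\dim_K H^i_{\mathfrak m}(M)_{-j}$. Hence
\[
F\big(K^{i}_{S/I_{\Delta}^{(k)}},t\big)=\sum_{{\bf a}\in\mathbb{Z}^n}\dim_K H^i_{\mathfrak m}(S/I_\Delta^{(k)})_{\bf a}\, t^{-|{\bf a}|}.
\]
By Theorem~\ref{thm:Takayama}, the summand equals $\dim_K\widetilde H^{\,i-|F|-1}(\Delta_{\bf a}(I_\Delta^{(k)});K)$, where $F=\supp_-({\bf a})$. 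Over a field, reduced cohomology and reduced homology have the same dimension, so we may use $\widetilde H_{i-|F|-1}$ instead.

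Next we identify the degree complex. Write ${\bf a}={\bf a}^+-{\bf b}$, where ${\bf a}^+\in\mathbb{N}^{[n]\setminus F}$ is the nonnegative part and ${\bf b}$ is supported on $F$ with all $b_j\geq1$. Two cases arise.
\begin{itemize}
\item If $F\notin\Delta$, then ${\bf x}_F\in I_\Delta$. So ${\bf x}^{\bf a}\in I_\Delta^{(k)}S_{G}$ for every $G\supseteq F$, because the variables in $F$ are inverted and every minimal prime $P$ meets $F$. Therefore $\Delta_{\bf a}$ is the void complex, and all its reduced (co)homology vanishes.
\item If $F\in\Delta$, then by the identity already used in the proof of Lemma~\ref{hyperlemma} (from \cite[Theorem 3.2]{kmt2025}) we have $\Delta_{\bf a}(I_\Delta^{(k)})=\Delta_{{\bf a}^+}(I_{\link_\Delta F}^{(k)})$. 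The key point is that this complex depends only on $F$ and ${\bf a}^+$, not on ${\bf b}$. To verify this I would unwind the definition: the condition ${\bf x}^{\bf a}\notin I_\Delta^{(k)}S_G$ for $G\supseteq F$ only involves the primes $P_C$ with $C\cap G=\emptyset$, and on such primes the exponents in $F$ play no role. Localizing at $F$ turns $I_\Delta^{(k)}$ into the symbolic power of the Stanley--Reisner ideal of $\link_\Delta F$ in the remaining variables.
\end{itemize}
This verification is the main step to check carefully, namely the exact identification of the localized symbolic power with $I_{\link_\Delta F}^{(k)}$ and its compatibility with the shifted indexing in Takayama's complex. Since the paper already invokes it from \cite{kmt2025}, I would cite it and only sketch the localization argument.

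Finally, we sum. For fixed $F\in\Delta$ and ${\bf a}^+$, the vector ${\bf b}$ ranges freely over $\mathbb{Z}_{\geq1}^{F}$, and $t^{-|{\bf a}|}=t^{-|{\bf a}^+|}\,t^{|{\bf b}|}$. The sum over ${\bf b}$ factors as
\[
\prod_{j\in F}\sum_{b_j\geq1}t^{b_j}=\left(\frac{t}{1-t}\right)^{|F|}.
\]
Summing over $F\in\Delta$ and ${\bf a}^+\in\mathbb{N}^{[n]\setminus F}$ gives exactly the stated formula. The formal-series manipulation is legitimate because each graded piece of $K^i$ is finite-dimensional.
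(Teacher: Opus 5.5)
Your argument is correct and is essentially the derivation behind this result. The paper does not reprove it; it imports it from \cite{kmt2025}, and elsewhere it uses exactly your key identity $\Delta_{\bf a}(I_\Delta^{(k)})=\Delta_{{\bf a}^+}(I_{\link_\Delta F}^{(k)})$ for $F=\supp_-{\bf a}\in\Delta$, quoted from the proof of \cite[Theorem 3.2]{kmt2025}. Combined with Takayama's formula, the vanishing for $F\notin\Delta$, and the geometric series $\prod_{j\in F}\sum_{b_j\ge 1}t^{b_j}$, this gives the stated formula just as you describe. The one convention to keep explicit is that $\link_\Delta F$ is taken on the ground set $[n]\setminus F$ (your ``remaining variables''), so that entries of ${\bf a}^+$ at non-vertices of the link still count in the condition of Lemma~\ref{deg cpx}; this is what makes summing over all of $\mathbb{N}^{[n]\setminus F}$ correct.
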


For a squarefree monomial ideal $I$, using Corollary \ref{dual}, we obtain a stronger result that extends the depth comparison to the Krull dimension of the dual modules of the local cohomology module $H_{\mathfrak m}^{i}(S/I^{(k)})$. 

\begin{thm}\label{ineq of dual mod}
Let $I$ be a squarefree monomial ideal $I$, and let $t\geq1$. Then, we have 
$$\dim K_{S/I^{(t)}}^{i}\leq\dim K_{S/I^{(k)}}^{i}$$
for all $i\geq0$ and $k\geq(\bight I-1)t-(\bight I-2)$.
\end{thm}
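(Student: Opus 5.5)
The plan is to read off $\dim K^{i}_{S/I^{(k)}}$ from the Hilbert series in Corollary~\ref{dual}, and then to transfer nonvanishing homology from exponent $t$ to exponent $k$ by rescaling multidegrees.

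\emph{Reading off the dimension.} Write $I=I_\Delta$ and $h=\bight I$. For each $F\in\Delta$ and ${\bf a}\in\mathbb N^{[n]\setminus F}$, the complex $\Delta_{\bf a}(I^{(k)}_{\link_\Delta F})$ is nonempty only for finitely many ${\bf a}$, since each $a_j\le k-1$ on the relevant coordinates. So the inner sum in Corollary~\ref{dual} is a Laurent polynomial with nonnegative coefficients. These coefficients are nonnegative and the factor $(t/(1-t))^{|F|}$ has a pole of order $|F|$ at $t=1$, so no cancellation occurs. Hence
\[
\dim K^{i}_{S/I^{(k)}}=\max\bigl\{|F| : F\in\Delta,\ \exists\,{\bf a}\ \text{with}\ \widetilde H_{i-|F|-1}(\Delta_{\bf a}(I^{(k)}_{\link_\Delta F});K)\neq0\bigr\},
\]
with the convention $\max\emptyset=-\infty$. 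It therefore suffices to prove the following claim for every squarefree monomial ideal $J=I_{\link_\Delta F}$. Every minimal prime of $J$ is the restriction of a minimal prime of $I$ to $[n]\setminus F$, so $\bight J\le h$. The claim is: for every ${\bf a}$ there is ${\bf b}$ with $\Delta_{\bf b}(J^{(k)})=\Delta_{\bf a}(J^{(t)})$ whenever $k\ge (h-1)(t-1)+1$.

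\emph{Reduction to a weight problem.} By Lemma~\ref{deg cpx}, $\Delta_{\bf a}(J^{(t)})$ is generated by the facets $G$ whose complementary minimal prime $P$ satisfies $\sum_{j\in P}a_j\le t-1$. So I need ${\bf b}\in\mathbb N^{n}$ such that, for every minimal prime $P$ of $J$ (with $|P|\le h$),
\[
\sum_{j\in P}a_j\le t-1 \iff \sum_{j\in P}b_j\le k-1 .
\]
The case $t=1$ is trivial: take ${\bf b}=(k-1)\cdot\mathbf 1_{\{a_j>0\}}$ plus zeros, noting that then only supports matter. So assume $t\ge2$.

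\emph{Constructing ${\bf b}$.} First replace $a_j$ by $\min\{a_j,t\}$; this does not change the complex. I would try ${\bf b}=q{\bf a}+{\bf r}$ with $q=\lfloor (k-1)/(t-1)\rfloor\ge h-1$ and a correction vector ${\bf r}$. A prime with $\sum_P a_j\le t-1$ then has weight at most $q(t-1)+\sum_P r_j$. A prime with $\sum_P a_j\ge t$ has weight at least $qt+\sum_P r_j$. The slack $k-1-q(t-1)$ is less than $t-1$, and I would absorb it into ${\bf r}$, supported on coordinates with $a_j>0$, so that the small primes stay below $k$ and the large primes cross $k$. The hypothesis $|P|\le h$, equivalently $q\ge h-1$, should be exactly what guarantees this. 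A prime with $\sum_P a_j\ge t$ has at most $h$ nonzero entries, so some entry satisfies $a_j\ge\lceil t/h\rceil$, and the gain of $q$ per unit of $a$ should dominate the lost slack once $k\ge (h-1)(t-1)+1$. If a uniform correction fails, the fallback is a coordinatewise choice such as $b_j=\lceil a_j(k-1)/(t-1)\rceil$ or $b_j=\lfloor a_jk/t\rfloor$, with a rounding analysis using $|P|\le h$.

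\emph{Conclusion and main obstacle.} Once the equivalence holds, $\Delta_{\bf b}(J^{(k)})=\Delta_{\bf a}(J^{(t)})$, so the homology agrees and the displayed maximum for $k$ dominates that for $t$. The main obstacle is this weight-construction step: producing an integral ${\bf b}$ that separates the two families of primes simultaneously. This is where the bound $(\bight I-1)t-(\bight I-2)$ must enter, and I would first check the extreme cases $|P|=h$ with $\sum_P a_j=t$ to calibrate the choice of ${\bf r}$.
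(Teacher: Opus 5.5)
Your reduction matches the paper's. You read $\dim K^{i}$ off Corollary~\ref{dual} as the largest $|F|$ (equivalently $|\supp_{-}{\bf a}|$) carrying nonzero homology, pass to the link, and use Lemma~\ref{deg cpx} to reduce to finding ${\bf b}$ with $\sum_{P}a_j\le t-1\iff\sum_{P}b_j\le k-1$ for all minimal primes $P$ with $|P|\le h$. The gap is that this weight problem is the entire content of the theorem, and you leave it unsolved. Moreover, the explicit rules you propose fail inside the stated range.
\begin{itemize}
\item For $b_j=\lfloor a_jk/t\rfloor$, take $h=2$, $t=2$, $k=3$ and ${\bf a}|_P=(1,1)$. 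Then $P$ is excluded at level $t$, but $\sum_P b_j=2\le k-1$.
\item For $b_j=\lceil a_j(k-1)/(t-1)\rceil$, take $h=2$, $t=3$, $k=4$ and ${\bf a}|_P=(1,1)$. Then $P$ is included at level $t$, but $\sum_P b_j=4>k-1$.
\end{itemize}
In the second example $q=1$. For a rule $b_j=\varphi(a_j)$ with $\varphi(0)=0$, the patterns $(1,1)$ and $(0,2)$ must stay $\le 3$ while $(1,2)$ must reach $4$, which forces $\varphi(1)=1$ and $\varphi(2)=3$. So the correction ${\bf r}$ in ${\bf b}=q{\bf a}+{\bf r}$ must depend on the size of $a_j$, not merely on whether $a_j>0$.

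Your $t=1$ choice is also off by one. With $b_j=k-1$ on $\{j:a_j>0\}$, a prime meeting that set in a single coordinate gets weight $k-1$, so it survives at level $k$ but not at level $1$. You need $b_j\ge k$ there.

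The missing idea is to round $\frac{k}{t}a_j$ with an offset tuned to $h$. The paper takes $b_j=\lfloor \frac{k}{t}a_j+\frac{h-1}{h}\rfloor$ on $[n]\setminus\supp_{-}{\bf a}$, and $b_j=a_j$ on $\supp_{-}{\bf a}$ so that the link is unchanged.
\begin{itemize}
\item If $\sum_P a_j\ge t$: each floor loses less than $1/h$ and $|P|\le h$, so $\sum_P b_j>\frac{k}{t}\sum_P a_j-1\ge k-1$.
\item If $\sum_P a_j\le t-1$ and $k\le (h-1)t$: we have $\frac{k}{t}a_j\le (h-1)a_j\in\mathbb{Z}$ and the offset is $<1$, so $b_j\le (h-1)a_j$. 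Hence $\sum_P b_j\le (h-1)(t-1)\le k-1$. This is exactly where $k\ge (h-1)t-(h-2)$ is used.
\item If $\sum_P a_j\le t-1$ and $k>(h-1)t$: directly $\sum_P b_j\le \frac{k}{t}(t-1)+(h-1)<k$.
\end{itemize}
In the example above this rule gives $b(1)=1$, $b(2)=3$, $b(3)=4$, which are precisely the forced values. Without a construction of this kind, your proposal stops short of a proof.
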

\begin{proof}
Set $\Delta=\Delta(I)$ and $p=\bight I$. From Corollary \ref{dual}, for $i$ and $\ell$, we know 
$$\dim K_{S/I^{(\ell)}}^{i}=\max\{|\supp_{-}{\bf a}|\,\,: H_{\mathfrak{m}}^{i}(S/I^{(\ell)})_{\bf a}\neq0\}.$$
Hence, it is enough to prove that for any ${\bf a}\in\mathbb{Z}^{n}$, there exists ${\bf b}\in\mathbb{Z}^{n}$ such that $\supp_{-}{\bf a}=\supp_{-}{\bf b}$ and $\Delta_{\bf a}(I_{\Delta}^{(t)})=\Delta_{\bf b}(I_{\Delta}^{(k)})$ for any $k\geq1$ and either $k=t$ or $k\geq(p-1)t-(p-2)$. It is enough to consider the case that $k\geq(p-1)t-(p-2)$. Fix ${\bf a}=(a_{1},\ldots, a_{n})\in\mathbb{Z}^{n}$. If $\supp_{-}{\bf a}\notin\Delta$, then we see that $\Delta_{\bf a}(I_{\Delta}^{(m)})=\emptyset$ for all $m\geq1$. Hence, we may assume that $\supp_{-}{\bf a}\in\Delta$. Set $\Gamma=\link_{\Delta}\supp_{-}{\bf a}$. From the proof in \cite[Theorem 3.2]{kmt2025}, we have $$\Delta_{\bf a}(I_{\Delta}^{(m)})=\Delta_{\bf a^{+}}(I_{\Gamma}^{(m)})\mbox{ for all }m\geq1.$$
Notice that $\bight I_{\Gamma}\leq p$. Set $V=[n]\setminus\supp_{-}{\bf a}$ and define a vector ${\bf c}\in\mathbb{N}^{n}$ such that $c_{i}=\lfloor\frac{k}{t}a_{i}^{+}+\frac{p-1}{p}\rfloor$ if $i\in V$ and $c_{i}=0$ otherwise. Then, we claim that $\Delta_{\bf a^{+}}(I_{\Gamma}^{(t)})=\Delta_{\bf c}(I_{\Gamma}^{(k)})$. From Lemma \ref{deg cpx}, it is enough to prove that $\sum_{i\in V\setminus F}a_{i}^{+}\leq t-1$ if and only if $\sum_{i\in V\setminus F}c_{i}\leq k-1$ for each facet $F\in\mathcal{F}(\Gamma)$. Fix $F\in\mathcal{F}(\Gamma)$. First suppose that $\sum_{i\in V\setminus F}a_{i}^{+}\leq t-1$. We distinguish the following cases: 

{\bf Case 1.} Suppose that $k\leq(p-1)t$. Since $a_{i}^{+}\geq0$ and $(p-1)a_{i}^{+}\in\mathbb{Z}$ for each $i$, we have 
$$c_{i}=\left\lfloor\frac{k}{t}a_{i}^{+}+\frac{p-1}{p}\right\rfloor\leq\left\lfloor (p-1)a_{i}^{+}+\frac{p-1}{p}\right\rfloor=(p-1)a_{i}^{+},$$
and hence, we see that 
$$\sum_{i\in V\setminus F}c_{i}\leq (p-1)\sum_{i\in V\setminus F}a_{i}^{+}\leq(p-1)(t-1)\leq k-1.$$
{\bf Case 2.} Suppose that $k>(p-1)t$. Then we have 
$$\sum_{i\in V\setminus F}c_{i}\leq\frac{k}{t}\sum_{i\in V\setminus F}a_{i}^{+}+\frac{p-1}{p}|V\setminus F|\leq\frac{k}{t}(t-1)+(p-1)<k,$$
which implies that $\sum_{i\in V\setminus F}c_{i}\leq k-1$. 

Conversely, suppose that $\sum_{i\in V\setminus F}a_{i}^{+}\geq t$. Since $t\geq1$, we know $V\setminus F\neq\emptyset$, which implies that 
$$c_{i}>\frac{k}{t}a_{i}^{+}+\frac{p-1}{p}-1=\frac{k}{t}a_{i}^{+}-\frac{1}{p},$$
and hence, by the fact that $|V\setminus F|\leq p$, we obtain that 
$$\sum_{i\in V\setminus F}c_{i}>\frac{k}{t}\sum_{i\in V\setminus F}a_{i}^{+}-\frac{|V\setminus F|}{p}\geq k-\frac{|V\setminus F|}{p}\geq k-1.$$
Thus, we obtain that $\Delta_{\bf a^{+}}(I_{\Gamma}^{(t)})=\Delta_{\bf c}(I_{\Gamma}^{(k)})$. Define a vector ${\bf b}=(b_{1},\ldots, b_{n})\in\mathbb{Z}^{n}$ as $b_{i}=c_{i}$ if $i\in V$, $b_{j}=a_{j}$ otherwise. Then, we can verify that $\supp_{-}{\bf b}=\supp_{-}{\bf a}$ and $$\Delta_{\bf b}(I_{\Delta}^{(k)})=\Delta_{\bf c}(I_{\Gamma}^{(k)})=\Delta_{\bf a^{+}}(I_{\Gamma}^{(t)})=\Delta_{\bf a}(I_{\Delta}^{(t)}),$$
which completes the proof. 
\end{proof}

Theorem \ref{ineq of dual mod} gives a comparison result for Serre depth and, as a consequence, extends  \cite[Theorem 2.7]{nt2019} on the depth of symbolic powers.

\begin{cor}\label{Serre depth Decrease}
Let $I$ be a squarefree monomial ideal, and let $r\geq2$. Then for any $t\geq 1$ we have 
$$S_{r}\mbox{-}\depth S/I^{(t)}\geq S_{r}\mbox{-}\depth S/I^{(k)}\mbox{ for any } k\geq (\bight I-1)t-(\bight I-2).$$
In particular, we have 
$$\depth S/I^{(t)}\geq\depth S/I^{(k)}\mbox{ for any } k\geq (\bight I-1)t-(\bight I-2).$$
\end{cor}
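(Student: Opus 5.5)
The plan is to read the corollary off Theorem~\ref{ineq of dual mod} using only the definitions of $S_r$-depth and of depth in terms of the modules $K^j_M$. Fix $t\geq 1$ and $k\geq(\bight I-1)t-(\bight I-2)$, and write $M_t=S/I^{(t)}$ and $M_k=S/I^{(k)}$. Theorem~\ref{ineq of dual mod} gives
\[
\dim K^j_{M_t}\leq \dim K^j_{M_k}\quad\text{for every } j\geq 0,
\]
with the convention $\dim 0=-\infty$.

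For the Serre depth statement, put $s=S_r\text{-}\depth M_t$. By definition $s$ is the least $j$ with $\dim K^j_{M_t}\geq j-r+1$. In particular $\dim K^s_{M_t}\geq s-r+1$, and therefore
\[
\dim K^s_{M_k}\geq \dim K^s_{M_t}\geq s-r+1 .
\]
So $j=s$ lies in the set whose minimum defines $S_r\text{-}\depth M_k$. Hence $S_r\text{-}\depth M_k\leq s$, which is the first inequality.

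One point needs checking: the minimum defining $S_r\text{-}\depth M_t$ must be attained, i.e.\ the set of admissible $j$ must be nonempty. For $j=\dim M_t$ the top local cohomology is nonzero, and its dual has dimension $\dim M_t\geq \dim M_t-r+1$ because $r\geq 1$. So the set is nonempty.

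For the depth statement, observe that $\depth M=\min\{j: H^j_{\mathfrak m}(M)\neq 0\}=\min\{j:\dim K^j_M\geq 0\}$. Let $d=\depth M_t$. Then $\dim K^d_{M_t}\geq 0$, so $\dim K^d_{M_k}\geq 0$ by the inequality above, and hence $\depth M_k\leq d$.

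There is no real obstacle here. The only care needed is the convention $\dim 0=-\infty$, which ensures that nonvanishing of $H^j_{\mathfrak m}$ is read off correctly from the dimension inequality, together with the nonemptiness check above.
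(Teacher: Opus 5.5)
Your argument is correct and matches the paper's approach. The paper states this corollary without a separate proof, as an immediate consequence of Theorem~\ref{ineq of dual mod}, which is exactly what you do: evaluate the defining condition of $S_r\mbox{-}\depth$ at $j=S_r\mbox{-}\depth S/I^{(t)}$ and apply $\dim K^j_{S/I^{(t)}}\leq \dim K^j_{S/I^{(k)}}$. You handle the depth statement directly via $H^j_{\mathfrak m}(M)\neq 0 \iff \dim K^j_M\geq 0$, which is equivalent to reading it as the case $r\gg 0$, where $S_r\mbox{-}\depth$ coincides with $\depth$.
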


\smallskip

It follows from Theorem~\ref{min prime} that  a complementary edge ideal $I_c(G)$ satisfies $\bight I_c(G)\leq3$. So in particular, we obtain the following result from Corollary \ref{Serre depth Decrease}. 

\begin{cor}\label{almost non-increasing}
For any graph $G$ and any $t\geq 1$, we have
$$\depth S/I_{c}(G)^{(t)}\geq\depth S/I_{c}(G)^{(k)}\mbox{ for any } k\geq 2t-1.$$
\end{cor}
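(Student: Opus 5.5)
This is a direct specialization of Corollary~\ref{Serre depth Decrease} to $I=I_c(G)$. The only inputs needed are a bound on $\bight I_c(G)$ and the reduction for isolated vertices.

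First, I will treat the degenerate case. Let $A$ be the set of isolated vertices and $H=G-A$. If $H\cong K_2$, then $I_c(G)^{(k)}=({\bf x}_A^k)$ is principal, so $\depth S/I_c(G)^{(k)}=n-1$ for every $k$, and the inequality holds trivially.

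Otherwise, I will bound the big height. If $G$ has no isolated vertices, Theorem~\ref{min prime} shows that every minimal prime of $I_c(G)$ has the form $P_e$ with $e\in E(G^c)$ or $P_T$ with $T\in\mathcal{T}(G)$, so $\bight I_c(G)\le 3$. In general, $I_c(G)={\bf x}_A I_c(H)$. Its minimal primes are therefore the principal primes $(x_i)$ for $i\in A$ together with the minimal primes of $I_c(H)$, all of height at most $3$. So $p:=\bight I_c(G)\le 3$ in every case. Alternatively, one can apply the argument below to $I_c(H)$ in $S'$ and transfer the result via (\ref{depth-isolated}).

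Next, I will apply Corollary~\ref{Serre depth Decrease}. It gives $\depth S/I^{(t)}\ge \depth S/I^{(k)}$ for all $k\ge (p-1)t-(p-2)$. The threshold is $(p-1)t-(p-2)=(p-1)(t-1)+1$, which is nondecreasing in $p$ because $t\ge1$. Hence $p\le 3$ gives $(p-1)t-(p-2)\le 2t-1$, so every $k\ge 2t-1$ lies in the admissible range, which proves the claim.

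There is no real obstacle. The only point needing care is justifying $\bight\le 3$ when isolated vertices are present (or when $p\le 2$), and this is handled by the monotonicity of the threshold in $p$ noted above.
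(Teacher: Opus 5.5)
Your proposal is correct and follows the paper's argument: the corollary is obtained directly from Corollary~\ref{Serre depth Decrease} together with the bound $\bight I_c(G)\le 3$ coming from Theorem~\ref{min prime}. You also supply two details the paper leaves implicit: the isolated-vertex case via $I_c(G)={\bf x}_A I_c(H)$, and the fact that the threshold $(p-1)(t-1)+1$ is nondecreasing in $p$.
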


Moreover, for cover ideals, we recover \cite[Theorem 3.2]{hktt2017}.

\begin{cor}\label{depth Decrease}
Let $G$ be a graph. Then we have 
$$\depth S/J(G)^{(k)}\geq\depth S/J(G)^{(k+1)}\mbox{ for all }k\geq1.$$
\end{cor}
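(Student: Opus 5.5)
The plan is to read off this statement from Corollary~\ref{Serre depth Decrease} applied to the cover ideal $J(G)$, with $t=k$. The point is that the comparison exponent $(\bight I-1)t-(\bight I-2)$ collapses to $t$ when $\bight I=2$, and that the comparison needs to be pushed from $k$ itself to $k+1$.

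First, $J(G)=\bigcap_{\{i,j\}\in E(G)}(x_i,x_j)$, so every minimal prime of $J(G)$ has height $2$. If $E(G)=\emptyset$ the statement is vacuous, so I will assume $E(G)\neq\emptyset$ and hence $\bight J(G)=2$. Then $(\bight J(G)-1)t-(\bight J(G)-2)=t$, and Corollary~\ref{Serre depth Decrease} gives
\[
\depth S/J(G)^{(t)}\geq \depth S/J(G)^{(k')} \quad\text{for all } k'\geq t .
\]
Taking $t=k$ and $k'=k+1$ yields the claim.

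To be careful, I would check that the proof of Theorem~\ref{ineq of dual mod} really covers the boundary case $p=2$. There Case~1 ($k\leq(p-1)t=t$) forces $k=t$, and Case~2 ($k>t$) is fine. The converse inequality uses $|V\setminus F|\leq p$, which holds because every complement of a facet of a link has size at most $\bight I_\Gamma\leq 2$. With $p=2$ the rounding vector is
\[
c_i=\left\lfloor \tfrac{k}{t}a_i^{+}+\tfrac12\right\rfloor ,
\]
and the same estimates go through.

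Finally, the depth statement follows from the $\dim K^i$ comparison in the standard way. Since $\depth M=\min\{i: K_M^i\neq 0\}$, the inequality $\dim K^i_{S/I^{(t)}}\leq \dim K^i_{S/I^{(k')}}$ for every $i$ shows that the first nonvanishing $K^i$ for $S/I^{(k')}$ occurs no later than the one for $S/I^{(t)}$. This is exactly the ``in particular'' part of Corollary~\ref{Serre depth Decrease}, so no new work is needed. The only potential obstacle is this boundary check for $p=2$, and it is routine.
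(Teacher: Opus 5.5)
Your proposal is correct and is exactly the paper's argument: since $J(G)$ is unmixed of height $2$, the threshold $(\bight J(G)-1)t-(\bight J(G)-2)$ in Corollary~\ref{Serre depth Decrease} reduces to $t$, so $\depth S/J(G)^{(t)}\geq\depth S/J(G)^{(k)}$ for all $k\geq t$, and in particular for $k=t+1$. Your check of the boundary case $p=2$ in the proof of Theorem~\ref{ineq of dual mod}, and of the passage from $\dim K^i$ to depth, is a harmless sanity check; the paper simply cites the corollary.
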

\begin{proof}
Since the cover ideal $J(G)$ of $G$ is unmixed with $\height J(G)=2$, from Corollary \ref{Serre depth Decrease}, we obtain that 
$$\depth S/J(G)^{(t)}\geq\depth S/J(G)^{(k)}\mbox{ for any } k\geq t,$$
which completes the proof. 
\end{proof}

Also, if $I_{c}(G)$ is unmixed, then the symbolic depth function of $I_{c}(G)$ is non-increasing. 

\begin{prop}
Let $G$ be a graph. If $I_{c}(G)$ is unmixed, then we have 
$${\rm depth}(S/I_{c}(G)^{(k)})\geq{\rm depth}(S/I_{c}(G)^{(k+1)})\mbox{ for all }k\geq1.$$
\end{prop}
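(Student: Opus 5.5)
The plan is to reduce to Corollary \ref{Serre depth Decrease} with $t=k$ and exponent $k+1$. That corollary gives $\depth S/I^{(t)}\geq \depth S/I^{(k')}$ whenever $k'\geq (\bight I-1)t-(\bight I-2)$. When $\bight I=2$ the threshold is exactly $t$, so every $k'\geq t$ is allowed, in particular $k'=t+1$. The whole task is therefore to show that unmixedness of $I_c(G)$ forces either $\bight I_c(G)\leq 2$ or a situation handled directly.

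First reduce to graphs without isolated vertices. Let $A$ be the set of isolated vertices and $H=G-A$.

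\emph{Case $H\cong K_2$.} Then $I_c(G)=({\bf x}_A)$ is principal, and Corollary \ref{asymptotic}(a) gives constant depth $n-1$.

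\emph{Case $H\not\cong K_2$.} By (\ref{depth-isolated}), $\depth S/I_c(G)^{(k)}=\depth S'/I_c(H)^{(k)}+|A|$. Moreover $I_c(G)={\bf x}_A I_c(H)$, so the associated primes of $I_c(G)$ are those of $I_c(H)$ together with the height-one primes $(x_i)$, $i\in A$. If $A\neq\emptyset$, unmixedness of $I_c(G)$ would therefore force $I_c(H)$ to have only height-one primes. This is impossible, since by Theorem \ref{min prime} all primes of $I_c(H)$ have height $2$ or $3$. Hence $A=\emptyset$, or we are in the principal case already treated.

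With $G$ free of isolated vertices, Theorem \ref{min prime} says the minimal primes of $I_c(G)$ are the $P_e$ for $e\in E(G^c)$ (height $2$) and the $P_T$ for $T\in\mathcal{T}(G)$ (height $3$). Unmixedness leaves two possibilities.

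\emph{(i) All primes have height $2$.} Then $G$ is triangle-free and $\bight I_c(G)=2$. Corollary \ref{Serre depth Decrease} with $t=k$ gives the inequality for $k'=k+1\geq k$. Equivalently, $I_c(G)=J(G^c)$ is a cover ideal, and Corollary \ref{depth Decrease} applies directly.

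\emph{(ii) All primes have height $3$.} Then $E(G^c)=\emptyset$, so $G=K_n$ and $I_c(K_n)$ is the squarefree Veronese ideal $I_{n,n-2}$, the intersection of all $P_T$ with $|T|=3$. The corollary now gives only the threshold $2k-1$, so this case needs a separate argument. I would show that $S/I_c(K_n)^{(k)}$ has constant depth. For $n=3$ the ideal is the maximal ideal of $K[x_1,x_2,x_3]$, so the depth is $0$ for every $k$. For $n\geq 4$ I would use the Theorem \ref{char of CM and S_2} style fact, already known through \cite{hk2026}, that $S/I_c(K_n)^{(k)}$ is Cohen--Macaulay for all $k$: symbolic powers of squarefree Veronese ideals, i.e. of the Stanley--Reisner ideal of a skeleton of a simplex (a matroid complex), are Cohen--Macaulay by the Minh--Trung / Varbaro result. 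The depth is then $\dim=n-3$ for every $k$, and the inequality holds trivially.

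The \textbf{main obstacle} is case (ii). The general comparison in Corollary \ref{Serre depth Decrease} is not strong enough there, so one must invoke the Cohen--Macaulayness of all symbolic powers of matroid complexes. If citing that result is not acceptable, the fallback is to verify via Lemma \ref{deg cpx} that each degree complex $\Delta_{\bf a}(I^{(k)})$ is a skeleton of a simplex, hence has homology only in top degree, and then read off the depth from Takayama's formula (Theorem \ref{thm:Takayama}).
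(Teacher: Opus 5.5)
Your proof is correct and is essentially the paper's: unmixedness leaves only the principal case, the triangle-free height-$2$ case where $I_c(G)=J(G^c)$ (the paper cites \cite[Theorem 3.2]{hktt2017}, which your use of Corollary~\ref{Serre depth Decrease} or Corollary~\ref{depth Decrease} recovers), and the case $G=K_n$, where Cohen--Macaulayness of all symbolic powers of the matroidal ideal $I_c(K_n)$ \cite[Theorem 2.1]{v2011} makes the depth constant. Only your fallback for $G=K_n$ is inaccurate as stated: the degree complexes need not be skeleta of a simplex (for $n=5$, $k=2$, ${\bf a}=(0,0,0,1,5)$, Lemma~\ref{deg cpx} gives the star $\langle\{1,5\},\{2,5\},\{3,5\},\{4,5\}\rangle$), so citing the matroid theorem is the right way to close that case.
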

\begin{proof}
If $\height I_{c}(G)=1$, then $I_{c}(G)$ is a principal ideal and the assertion is clear.  In the case that $\height I_{c}(G)=2$, the ideal $I_{c}(G)$ is the cover ideal of $G^c$. Hence, by \cite[Theorem 3.2]{hktt2017} the result holds. If $\height I_{c}(G)=3$, then we have $G=K_{n}$. One can see that $I_{c}(K_{n})$ is matroidal. Therefore, by \cite[Theorem 2.1]{v2011}, the assertion follows. 
\end{proof}

The following result gives a positive answer to the symbolic-power part of \cite[Question 4.10]{fm2025}. 

\begin{prop}\label{Jc(G)}
Let $G$ be a graph. Then we have 
$$\depth S/(I_{c}(G)^{\vee})^{(k)}\geq\depth S/(I_{c}(G)^{\vee})^{(k+1)}\mbox{ for all }k\geq1.$$
\end{prop}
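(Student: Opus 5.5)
Since $I_c(G)^{(k)}={\bf x}_A^kI_c(G-A)^{(k)}$, I would first reduce to the case where $G$ has no isolated vertices. I would then identify the Alexander dual concretely. By Theorem \ref{min prime}, $I_c(G)=\bigcap_{e\in E(G^c)}P_e\cap\bigcap_{T\in\mathcal T(G)}P_T$. Dualizing, $I_c(G)^{\vee}$ is generated by the monomials ${\bf x}_e$ for nonedges $e$ of $G$ and ${\bf x}_T$ for triangles $T$ of $G$.

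A face of its Stanley--Reisner complex is a set containing no nonedge of $G$ and no triangle of $G$. Such a set is a clique of $G$ with at most two vertices. Hence $I_c(G)^{\vee}=I_\Delta$, where $\Delta$ is the graph $G$ regarded as a $1$-dimensional simplicial complex whose facets are the edges of $G$. So the statement is about symbolic powers of the Stanley--Reisner ideal of a one-dimensional complex.

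In this situation $\dim S/I_\Delta^{(k)}=2$. Moreover $\mathfrak m\notin\operatorname{Ass}(I_\Delta^{(k)})$, since $I_\Delta^{(k)}$ is an intersection of powers of minimal primes, so $\depth S/I_\Delta^{(k)}\in\{1,2\}$. Therefore it suffices to show the following: if $H^1_{\mathfrak m}(S/I_\Delta^{(k)})\neq 0$, then $H^1_{\mathfrak m}(S/I_\Delta^{(k+1)})\neq0$.

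By Takayama's formula, I would split the multidegrees ${\bf a}$ with $H^1_{\mathfrak m}(S/I_\Delta^{(k)})_{\bf a}\neq 0$ according to $|\supp_-{\bf a}|$.
\begin{enumerate}
\item If $|\supp_-{\bf a}|=1$, say $\supp_-{\bf a}=\{v\}$, then $\Delta_{\bf a}(I_\Delta^{(m)})=\Delta_{{\bf a}^+}(I_{\link v}^{(m)})$ as in the proof of Theorem \ref{ineq of dual mod}. Here $\link v$ is a $0$-dimensional complex, so nonvanishing of $\widetilde H^{-1}$ means this complex equals $\{\emptyset\}$. I expect this can be matched for $k+1$ by rescaling ${\bf a}^+$, for instance adding $1$ to a single coordinate, because the condition is a single inequality per vertex of the link.
\item If $|\supp_-{\bf a}|\geq 2$, nothing new arises, since then the complex involves links of edges, which are empty.
\item The main case is ${\bf a}\in\mathbb N^n$ with $\Delta_{\bf a}(I_\Delta^{(k)})$ disconnected. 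By Lemma \ref{deg cpx}, this complex is generated by the edges $F$ of $G$ with $|{\bf a}|-{\bf a}(F)\leq k-1$.
\end{enumerate}

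For the main case I would take ${\bf b}={\bf a}+{\bf e}_v$ for a well-chosen vertex $v$. An edge $F\ni v$ then has $|{\bf b}|-{\bf b}(F)=|{\bf a}|-{\bf a}(F)$, while an edge $F\not\ni v$ has this quantity increased by one. Against the new threshold $k$, this gives
$$\Delta_{\bf a}(I_\Delta^{(k)})\subseteq\Delta_{\bf b}(I_\Delta^{(k+1)}).$$
The only new facets are edges through $v$ with $|{\bf a}|-{\bf a}(F)=k$. It remains to choose $v$ so that these new edges do not join two components, and so that the resulting complex is still disconnected.

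My first attempt would be to take $v$ to be a vertex of a component $W$ of $\Delta_{\bf a}(I_\Delta^{(k)})$ maximizing ${\bf a}$. The idea is that any new edge $\{v,u\}$ with $u$ in another component $W'$ forces ${\bf a}(u)$ to be large. Combining this with an edge of $W'$ satisfying the old inequality should then yield a contradiction, possibly after replacing $v$ by a vertex of $W'$.

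If this fails, the fallback is to choose ${\bf b}$ by a rounding of $\frac{k+1}{k}{\bf a}$, in the spirit of the proof of Theorem \ref{ineq of dual mod}. That argument uses $|V\setminus F|$ in place of $\bight$, and the extra room there comes from the fact that facets have size $2$.

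I expect this choice of ${\bf b}$ guaranteeing disconnectedness, together with the bookkeeping in the case where some new edge appears, to be the main obstacle. The remaining steps are routine applications of Lemma \ref{deg cpx} and Takayama's formula.
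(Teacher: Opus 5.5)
\textbf{Verdict: there is a genuine gap.} Your set-up is sound up to the main case, but the main case, which is the whole content of the proposition, is left as a heuristic.

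\textbf{What is correct.} $I_c(G)^{\vee}$ is the Stanley--Reisner ideal of $G$ viewed as a pure $1$-dimensional complex $\Delta$. Isolated vertices of $G$ simply become non-faces, so you do not need the identity $I_c(G)^{(k)}={\bf x}_A^kI_c(G-A)^{(k)}$; that identity concerns $I_c(G)$, not its dual. It is also true that $\depth S/I_\Delta^{(k)}\in\{1,2\}$. Only ${\bf a}$ with $|\supp_-{\bf a}|\le 1$ can contribute to $H^1_{\mathfrak m}$.

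In fact $|\supp_-{\bf a}|=1$ never contributes, so your case (1) is vacuous. The equality $\Delta_{\bf a}(I_\Delta^{(k)})=\{\emptyset\}$ would force $\supp_-{\bf a}$ to be a facet of $\Delta$, and $\Delta$ has no $0$-dimensional facets.

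\textbf{Where it breaks.} As you formulate the main case, $v$ maximizes ${\bf a}$ only on one component $W$. A new edge $\{v,u\}$ with $u$ in another component $W'$ then only yields a vertex $x\in W'$ with $a_x\geq a_v+1$, which is no contradiction.

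The fallback does not work either. In the proof of Theorem \ref{ineq of dual mod} the rounding error is governed by $|V\setminus F|$. That is the size of the \emph{complement} of a facet, here $n-2=\bight I_\Delta$, not the facet size. So that argument only reaches $k\geq (n-3)t-(n-4)$, not $k=t+1$ in general.

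As written, the proposal therefore does not prove the statement. The paper instead deduces it at once from \cite[Theorem 1.7]{mt2025}, which treats symbolic powers of Stanley--Reisner ideals with $\dim S/I_\Delta=2$.

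\textbf{How to close the gap.} Your first idea can be completed, and then it gives a self-contained proof of the case of that theorem needed here.
\begin{enumerate}
\item Take $v$ to be a vertex of the \emph{whole} complex $\Delta_{\bf a}(I_\Delta^{(k)})$ with $a_v$ maximal among all its vertices, and put ${\bf b}={\bf a}+{\bf e}_v$.
\item By your computation, the only new facets of $\Delta_{\bf b}(I_\Delta^{(k+1)})$ are the edges $\{v,u\}$ of $G$ with $a_v+a_u=|{\bf a}|-k$.
\item Suppose such a $u$ lay in a component of $\Delta_{\bf a}(I_\Delta^{(k)})$ not containing $v$. Since the complex is generated by edges, $u$ lies on an old facet $\{u,x\}$. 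Then $a_u+a_x\geq |{\bf a}|-k+1$, hence $a_x\geq a_v+1$, contradicting the choice of $v$.
\item So the new edges only attach vertices to the component of $v$, and every other component survives unchanged.
\item Hence $\Delta_{\bf b}(I_\Delta^{(k+1)})$ is disconnected, and $H^1_{\mathfrak m}(S/I_\Delta^{(k+1)})_{\bf b}\neq 0$ by Takayama's formula.
\end{enumerate}
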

\begin{proof}
Since $\dim S/I_{c}(G)^{\vee}=2$, the result follows from \cite[Theorem 1.7]{mt2025}. 
\end{proof}

To obtain a more precise description of the behavior of the symbolic depth function of a complementary edge ideal, in the following we compute the depths of its third and fourth symbolic powers. Examples exhibiting a strict drop from the third and fourth symbolic powers were given in \cite[Proposition 5.5]{fmm2026}. We will determine the size of this drop explicitly in terms of the regularity of the edge ideal of complement of the graph. The next lemma is crucial to this aim.

\begin{lemma}\label{deletion-lemma}
Let $G$ be a graph on the vertex set $[n]$ without isolated vertices and  ${\bf a}\in \mathbb{Z}^n$. With the notation introduced before Lemma  \ref{hyperlemma},  let $\Delta=\Delta(\mathcal{H}_{\bf a}^{(k)}(G))$,  $W\subseteq U$ and  $W'=W\setminus \{v:\,  a_v=0\}$. Then the following statements hold.
	\begin{enumerate}
		\item[\rm(a)] Let $v\in W$ be a vertex with $a_{v}=0$. Then $\link_{\Delta_W}v$ is the clique complex of some split graph. In particular, $\widetilde{H}_{i}(\Delta_W;K)\cong\widetilde{H}_{i}(\Delta_{W'};K)$ for all $i\geq2$.
		\item[\rm(b)] Assume $k=3$. If $a_{v}\geq2$ for some $v\in W'$, then  $\Delta_{W'}$ is a cone with
		apex $v$. In particular,  $\widetilde{H}_{i}(\Delta_{W'};K)=0$ for all $i$.
		\item[\rm(c)]  Assume $k=3$ and $\widetilde{H}_{i}(\Delta_W;K)\neq0$ for some $i\geq 2$. Then
		$a_{v}=1$ for every $v\in W'$, in which case $\Delta_{W'}=\Delta(I(G^{c}))_{W'}$.
        \item[\rm(d)] For every $i\geq2$, we have $\widetilde{H}^{i}(\Delta(\mathcal{H}_{\bf a}^{(3)}(G));K)\neq0$ for some ${\bf a}\in\mathbb{Z}^{n}$ if and only if $\widetilde{H}^{i}(\Delta(I(G^{c}))_{A};K)\neq0$ for some $A\subset[n]$.
	\end{enumerate}
\end{lemma}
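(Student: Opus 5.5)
Throughout, write $Z=\{v\in U: a_v=0\}$, so that $W'=W\setminus Z$. Recall that $F\subseteq U$ is a face of $\Delta=\Delta(\mathcal{H}_{\bf a}^{(k)}(G))$ if and only if $F$ contains no non-edge $\{i,j\}$ of $G$ with $a_i+a_j\le k-1$ and no triangle $\{i,j,\ell\}$ of $G$ with $a_i+a_j+a_\ell\le k-1$.

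For (a), fix $v\in W\cap Z$. A subset $F\subseteq W\setminus\{v\}$ lies in $\link_{\Delta_W}v$ exactly when $F\cup\{v\}$ contains no edge of $\mathcal{H}$. Since $a_v=0$ and $k\ge1$, the edge $\{v,u\}$ with $u\notin N_G(v)$ is always forbidden when $a_u\le k-1$. I will show that the link is the clique complex of the following graph $L$. Its vertex set is $N_G(v)\cap W$ together with those non-neighbours $u$ of $v$ in $W$ having $a_u\ge k$. Two vertices of $L$ are joined unless they form a forbidden non-edge or, together with $v$, a forbidden triangle.

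The key observation is that every forbidden configuration involving $v$ has size at most $3$, so all minimal non-faces of the link have size at most $2$; this is what makes the link a clique complex. For the split structure, take the clique to be $N_G(v)\cap W$ and the independent set to be the remaining vertices. I expect the main subtlety here: one must check that two neighbours $i,j$ of $v$ are always compatible, because if $\{i,j\}\in E(G)$ then the triangle $\{v,i,j\}$ is killed when $a_i+a_j\le k-1$. So the triangle condition itself gives the adjacency pattern, and I must verify that it has split form. If the neighbours do not literally form a clique, I would fall back to the weaker statement that suffices, namely that clique complexes of chordal (in particular split) graphs have no homology in degree $\ge1$.

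Granting that the link has $\widetilde H_i=0$ for $i\ge1$, the Mayer--Vietoris sequence for $\Delta_W=\Delta_{W\setminus v}\cup \st v$ gives $\widetilde H_i(\Delta_W)\cong\widetilde H_i(\Delta_{W\setminus v})$ for $i\ge2$. Iterating over the vertices of $W\cap Z$ then yields the claim for $W'$.

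For (b), take $k=3$ and $v\in W'$ with $a_v\ge2$. Every other $u\in W'$ has $a_u\ge1$, so $a_v+a_u\ge3>k-1$ and $a_v+a_u+a_w\ge4$. Hence no hyperedge contains $v$, so $v$ is a cone point and $\Delta_{W'}$ is acyclic.

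For (c), combine (a) and (b). Nonvanishing of $\widetilde H_i(\Delta_W)$ for $i\ge2$ gives nonvanishing for $\Delta_{W'}$, which forces all $a_v=1$ on $W'$. In that case non-edges have sum $2\le2$, so they are hyperedges, while triangles have sum $3$, so they are not. Therefore $\Delta_{W'}=\Delta(I(G^c))_{W'}$.

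For (d), the forward direction applies (c) with $W=U$. For the converse, given $A$, choose ${\bf a}$ with $a_v=1$ on $A$ and $a_v=-1$ off $A$, so that $U=A$. Then $\Delta=\Delta(I(G^c))_A$ directly, and the cohomology statement follows by passing between homology and cohomology over a field.
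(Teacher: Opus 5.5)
There is a genuine gap in (a), which (c) and (d) rest on. Your treatment of (b), (c) and (d) is otherwise fine. In the converse of (d), your choice $a_v=-1$ off $A$ makes $U=A$ and gives $\Delta=\Delta(I(G^c))_A$ directly. That is a legitimate shortcut compared with setting $a_v=0$ off $A$ and deleting those vertices via (a), as the paper does.

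The split partition you propose for (a) is false in both halves. Take $i,j\in N_G(v)\cap W$ with $a_i+a_j\le k-1$. Either $\{i,j\}\notin E(G)$ and $\{i,j\}$ is a hyperedge, or $\{i,j\}\in E(G)$ and the triangle $\{v,i,j\}$ is a hyperedge, since its sum is $a_i+a_j$ because $a_v=0$. In both cases $\{i,j\}\notin\link_{\Delta_W}v$, so the neighbours of $v$ do not form a clique (take $a_i=a_j=0$). On the other side, every non-neighbour $u$ of $v$ that lies in the link has $a_u\ge k$. Two such vertices are therefore always joined, so they form a clique rather than an independent set. Your fallback (``clique complexes of chordal graphs have no homology in degree $\ge 1$'') does not rescue this, because chordality of $L$ is exactly what remains unproved.

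The missing observation is that the $G$-structure drops out. For distinct vertices $i,j$ of $\link_{\Delta_W}v$, one has $\{i,j\}\in\link_{\Delta_W}v$ if and only if $a_i+a_j\ge k$.
\begin{itemize}
\item If $a_i+a_j\le k-1$, both $i,j$ have $a$-value at most $k-1$, so both are neighbours of $v$, and the argument above applies.
\item If $a_i+a_j\ge k$, neither $\{i,j\}$ nor $\{v,i,j\}$ can be a hyperedge, and $\{v,i\},\{v,j\}$ are faces because $i,j$ are link vertices.
\end{itemize}
Hence $L$ is a threshold graph. It is split via $\{i:2a_i<k\}$ (independent) and $\{i:2a_i\ge k\}$ (clique), which is the partition the paper uses.

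The same fact is also needed for flagness. Your remark that hyperedges through $v$ have size at most $3$ ignores triangle hyperedges $\{i,j,\ell\}\subseteq F$ that do not contain $v$. These are excluded because pairwise adjacency in $L$ gives $a_i+a_j\ge k$, hence $a_i+a_j+a_\ell\ge k$. With this in place, your Mayer--Vietoris step and the iteration over the vertices of $W\setminus W'$ go through as written.
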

\begin{proof}
(a) Fix $v\in W$ with $a_{v}=0$. First, we claim that, for any distinct vertices $i,j\in\link_{\Delta_{W}}v$, one has $\{i,j\}\in\link_{\Delta_{W}}v$ if and only if $a_{i}+a_{j}\geq k$. Suppose that $a_{i}+a_{j}\leq k-1$. Then, since $i, j\in U$, we have $0\leq a_{i}, a_{j}\leq k-1$. If $\{i,v\}\notin E(G)$, then from $a_{i}+a_{v}=a_{i}\leq k-1$, we have $\{i,v\}\in E(\mathcal{H}_{\bf a}^{(k)}(G))$, a contradiction. Hence, we obtain $\{i,v\}\in E(G)$ and $\{j, v\}\in E(G)$. If $\{i,j\}\in E(G)$, then  $\{i,j,v\}\in\mathcal{T}(G)$ and $a_{i}+a_{j}+a_{v}\leq k-1$, and hence $\{i,j,v\}\in E(\mathcal{H}_{\bf a}^{(k)}(G))$, which implies that $\{i,j\}\notin\link_{\Delta_{W}}v$. Also if $\{i,j\}\notin E(G)$, then $\{i,j\}\in E(\mathcal{H}_{\bf a}^{(k)}(G))$, which again gives $\{i,j\}\notin\link_{\Delta_{W}}v$.  Conversely, suppose that $a_{i}+a_{j}\geq k$. Then, we know that $\{i,j\}\notin E(\mathcal{H}_{\bf a}^{(k)}(G))$ and $\{i,j,v\}\notin E(\mathcal{H}_{\bf a}^{(k)}(G))$. Hence, $\{i,j,v\}\in\Delta_{W}$, which means that $\{i,j\}\in\link_{\Delta_{W}}v$. 

Next, we claim that the 1-skeleton $(\link_{\Delta_{W}}v)^{(1)}$ is a split graph, that is its vertex set can be partitioned into an independent set and a clique of the graph. Take vertices $i,j\in V(\link_{\Delta_{W}}v)$ such that $i\neq j$, $2a_{i}<k$, and $2a_{j}<k$. 
Then, we have $a_{i}+a_{j}<k$, which implies that $\{i,j\}\notin\link_{\Delta_{W}}v$. Hence, we see that $\{i\in V(\link_{\Delta_{W}}v)\,\,: 2a_{i}<k\}$ is an independent set. On the other hand, take vertices $i,j\in V(\link_{\Delta_{W}}v)$ such that $i\neq j$, $2a_{i}\geq k$, and $2a_{j}\geq k$. Then we have $a_{i}+a_{j}\geq k$, which implies that $\{i,j\}\in\link_{\Delta_{W}}v$. Hence, we see that $\{i\in V(\link_{\Delta_{W}}v)\,\,: 2a_{i}\geq k\}$ is a clique, and so $(\link_{\Delta_{W}}v)^{(1)}$ is a split graph. 

We now prove that $\link_{\Delta_{W}}v=\operatorname{Cl}((\link_{\Delta_{W}}v)^{(1)})$, where $\operatorname{Cl}((\link_{\Delta_{W}}v)^{(1)})$ is the clique complex of $(\link_{\Delta_{W}}v)^{(1)}$. Take a clique $F$ of the graph $(\link_{\Delta_{W}}v)^{(1)}$. Notice that $F\subset V(\link_{\Delta_{W}}v)$. If $F=\{\emptyset\}$, or $F=\{i\}$ for some $i$, then we trivially have $F\in\link_{\Delta_{W}}v$. From now on, we assume that $|F|\geq2$.  Take vertices $i,j\in F\cup\{v\}$. If $v\in\{i,j\}$, then, we know $\{i,j\}\in\Delta_{W}$. Also, if $v\notin\{i,j\}$, then, since $i,j\in F$ and $F$ is a clique, we have $\{i,j\}\in\link_{\Delta_{W}}v$, and hence, we obtain $\{i,j,v\}\in\Delta_{W}$. Therefore, we see that $\{i,j\}\notin E(\mathcal{H}_{\bf a}^{(k)}(G))$. Now, take vertices $i,j,\ell\in F\cup\{v\}$. Since $v\notin F$, we may assume that $i,j\in F$. Now, since $F$ is a clique, we know $\{i,j\}\in\link_{\Delta_{W}}v$, which means that $a_{i}+a_{j}\geq k$. Also, we have $a_{\ell}\geq0$. Hence, we obtain that $a_{i}+a_{j}+a_{\ell}\geq k+0=k>k-1$, and thus, we see that $\{i,j,\ell\}\notin E(\mathcal{H}_{\bf a}^{(k)}(G))$. Therefore, we get $F\in\link_{\Delta_{W}}v$, which proves $\link_{\Delta_{W}}v=\operatorname{Cl}((\link_{\Delta_{W}}v)^{(1)})$. 

Since $(\link_{\Delta_{W}}v)^{(1)}$ is a split graph, it is both chordal and cochordal. Hence, from \cite[Theorem 1]{f1990}, $I_{\link_{\Delta_{W}}v}$ has a 2-linear resolution. From Hochster's formula (see Theorem~\ref{thm:hochster}), we obtain that 
\begin{equation}\label{HiLink}
\widetilde{H}_{i}(\link_{\Delta_{W}}v;K)=0\mbox{ for all }i\geq1.    
\end{equation}
Now, using the equalities $\Delta_{W}=\Delta_{W\setminus\{v\}}\cup\st_{\Delta_{W}}v$ and $\Delta_{W\setminus\{v\}}\cap\st_{\Delta_{W}}v=\link_{\Delta_{W}}v$, we have the Mayer--Vietoris exact sequence 
$$\cdots\rightarrow\widetilde{H}_{i}(\link_{\Delta_{W}}v;K)\rightarrow\widetilde{H}_{i}(\Delta_{W\setminus\{v\}};K)\rightarrow\widetilde{H}_{i}(\Delta_{W};K)\rightarrow\widetilde{H}_{i-1}(\link_{\Delta_{W}}v;K)\rightarrow\cdots$$
for all $i\geq1$. Applying (\ref{HiLink}) to this sequence, we conclude that $\widetilde{H}_{i}(\Delta_{W\setminus\{v\}};K)\cong\widetilde{H}_{i}(\Delta_{W};K)$ for every $i\geq2$. Applying this isomorphism successively to the vertices in $W\setminus W'$ we obtain statement (a).

\smallskip

(b) Assume $k=3$ and let $v\in W^{\prime}$ with $a_{v}\geq2$. By definition, we know $a_{u}\geq1$ for each $u\in W^{\prime}$. Fix $F\in\Delta_{W^{\prime}}$ with $v\notin F$. It suffices to prove that $F\cup\{v\}\in\Delta_{W^{\prime}}$. Suppose that $F\cup\{v\}\notin\Delta_{W^{\prime}}$. Then there exists an edge $e\in E(\mathcal{H}_{\bf a}^{(3)}(G))$ such that $e\subset F\cup\{v\}$. Now, since $F\in\Delta_{W^{\prime}}$, we have $f\not\subset F$ for each $f\in E(\mathcal{H}_{\bf a}^{(3)}(G))$. Hence, we see that $e\subset F\cup\{v\}$ and $e\not\subset F$, and hence, we obtain $v\in e$. We distinguish the following cases: 

{\bf Case 1.} Suppose that $|e|=2$. Then there exists $u\in F$ such that $e=\{u,v\}$. Since $e\in E(\mathcal{H}_{\bf a}^{(3)}(G))$ and $|e|=2$, we have $\{u,v\}\notin E(G)$ and $a_{u}+a_{v}\leq 3-1=2$. Since $a_{v}\geq2$ and $a_{u}\geq1$, this leads to a contradiction. 

{\bf Case 2.} Suppose that $|e|=3$. Then there exist vertices $u,w\in F$ such that $e=\{u,v,w\}$. Since $e\in E(\mathcal{H}_{\bf a}^{(3)}(G))$ and $|e|=3$, we have $\{u,v,w\}\in\mathcal{T}(G)$ and $a_{u}+a_{v}+a_{w}\leq 3-1=2$. Since $a_{v}\geq2$, $a_{u}\geq1$, and $a_{w}\geq1$, this leads to a contradiction.

Therefore, we obtain that $F\cup\{v\}\in\Delta_{W^{\prime}}$, as desired. 
\smallskip

(c) Assume that $k=3$ and $\widetilde{H}_{i}(\Delta_{W};K)\neq0$ for some $i\geq2$. 
From (a), we know $\widetilde{H}_{i}(\Delta_{W^{\prime}};K)\cong\widetilde{H}_{i}(\Delta_{W};K)\neq0$. Fix $v\in W^{\prime}$. From $W^{\prime}\subset U$ and the definition of $W^{\prime}$, we know $a_{v}\geq1$. Also (b) and the assumption that $\widetilde{H}_{i}(\Delta_{W};K)\neq0$ for some $i$, yield that $a_{v}\leq1$. Hence,  $a_{v}=1$ for every $v\in W^{\prime}$. Now, we prove that $\Delta_{W^{\prime}}=\Delta(I(G^{c}))_{W'}=\operatorname{Cl}(G_{W^{\prime}})$. Fix $F\in\Delta_{W^{\prime}}$ and $p,q\in F$. Suppose that $\{p,q\}\notin E(G)$. Then, we know $a_{p}+a_{q}=1+1=2=3-1$, and hence we have $\{p,q\}\in E(\mathcal{H}_{\bf a}^{(3)}(G))$, which is a contradiction to the assumption that $F\in\Delta_{W^{\prime}}$. Hence, $F$ is a clique of $G_{W^{\prime}}$. Conversely, fix $F\in\operatorname{Cl}(G_{W^{\prime}})$. Suppose that $F\notin\Delta_{W^{\prime}}$. Then there exists an edge $e\in E(\mathcal{H}_{\bf a}^{(3)}(G))$ such that $e\subset F$. We distinguish the following cases: 

{\bf Case 1.} Suppose that $|e|=2$. Then there exist $p,q\in F$ such that $e=\{p,q\}$. Since $e\in E(\mathcal{H}_{\bf a}^{(3)}(G))$ and $|e|=2$, we have $\{p,q\}\notin E(G)$ and $a_{p}+a_{q}\leq 3-1=2$. On the other hand, since $F$ is a clique of $G_{W^{\prime}}$, we obtain that $\{p,q\}\in E(G)$, a contradiction. 

{\bf Case 2.} Suppose that $|e|=3$. Then there exist vertices $p,q,r\in F$ such that $e=\{p,q,r\}$. Since $e\in E(\mathcal{H}_{\bf a}^{(3)}(G))$ and $|e|=3$, we have $\{p,q,r\}\in\mathcal{T}(G)$ and $3=a_{p}+a_{q}+a_{r}\leq 3-1=2$, a contradiction. 

Therefore, we obtain $F\in\Delta_{W^{\prime}}$, and hence the assertion follows. 

\smallskip

(d) First, suppose that $\widetilde{H}^{i}(\Delta(\mathcal{H}_{\bf a}^{(3)}(G));K)\neq0$ for some ${\bf a}\in\mathbb{Z}^{n}$ and $i\geq2$. Set $\Delta=\Delta(\mathcal{H}_{\bf a}^{(3)}(G))$ and $U=[n]\setminus\supp_{-}{\bf a}$. Set $A=U\setminus\{v\,\,: a_{v}=0\}$. Then $\widetilde{H}_{i}(\Delta;K)\neq0$, and it follows from (c) that $a_{v}=1$ for every $v\in A$ and $\Delta_{A}=\Delta(I(G^{c}))_{A}$. Combining this with (a), we obtain $\widetilde{H}_{i}(\Delta;K)\cong \widetilde{H}_{i}(\Delta(I(G^{c}))_{A};K)$  which gives $\widetilde{H}_{i}(\Delta(I(G^{c}))_{A};K)\neq 0$. 

Conversely, suppose that $\widetilde{H}_{i}(\Delta(I(G^{c}))_{A};K)\neq 0$ for some $A\subset [n]$ and $i\geq 2$. Let ${\bf a}\in \mathbb{Z}^n$ be a vector defined by $a_i=1$ for any $i\in A$ and $a_i=0$, otherwise, and set $\Delta=\Delta(\mathcal{H}_{\bf a}^{(3)}(G))$.  By (a), we have $\widetilde{H}_{i}(\Delta;K)\cong \widetilde{H}_{i}(\Delta_A;K)$. Also since $a_i=1$ for all non-zero entries of ${\bf a}$, the proof of (c) shows that $\Delta_A=\operatorname{Cl}(G_A)=\Delta(I(G^c))_A$.  Therefore, $\widetilde{H}_{i}(\Delta;K)\cong \widetilde{H}_{i}(\Delta(I(G^c))_A;K)\neq 0$, and hence, we obtain $\widetilde{H}^{i}(\Delta;K)\neq0$, which completes the proof. 
\end{proof}

\begin{thm}\label{3rd symbolic power}
Let $G$ be a graph on the vertex set $[n]$ with $n\geq 3$, and let $A$ be the set of isolated vertices of $G$. Then, we have 
$$\depth S/I_{c}(G)^{(3)}=
\begin{cases}
n-1, & \text{if } G-A\cong K_2,\\[2mm]
n-2, & \parbox[t]{8cm}{if $G-A$ is a star or a disjoint union of edges,\\
and $G-A\not\cong K_2$,}\\[2mm]
n-\max\{3,\operatorname{reg} I(G^c)\}, & \text{otherwise}.
\end{cases}
$$
\end{thm}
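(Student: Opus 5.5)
The first two cases need no new work. If $G-A\cong K_2$, or if $G-A$ is a star or a disjoint union of edges, Corollary~\ref{asymptotic}(a) already gives the value for every $k$, in particular $k=3$. So assume $G-A$ is neither. By (\ref{depth-isolated}), $\depth S/I_c(G)^{(3)}=\depth S'/I_c(H)^{(3)}+|A|$ with $H=G-A$. Isolated vertices of $G$ are cone points of $G^c$, hence add no homology to induced subcomplexes of $\Delta(I(G^c))=\operatorname{Cl}(G)$. By Hochster's formula this gives $\reg I(G^c)=\reg I(H^c)$. So we may assume $G$ has no isolated vertices and must show $\depth S/I_c(G)^{(3)}=n-\max\{3,\reg I(G^c)\}$.

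The plan is to compute the depth through Lemma~\ref{hyperlemma}, which gives $H^i_{\mathfrak m}(S/I_c(G)^{(3)})_{\bf a}\cong\widetilde H^{n-i-2}(\Delta(\mathcal H^{(3)}_{\bf a}(G));K)$. Hence
\[\depth S/I_c(G)^{(3)}=n-2-\max\{j:\ \widetilde H^{j}(\Delta(\mathcal H^{(3)}_{\bf a}(G));K)\neq0\text{ for some }{\bf a}\}.\]
By Hochster's formula, $\reg I(G^c)=\max\{j+2:\ \widetilde H_j(\operatorname{Cl}(G)_W;K)\neq0\text{ for some }W\}$. Lemma~\ref{deletion-lemma}(d) identifies the nonvanishing of cohomology in degrees $j\ge2$ for the two families. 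So if $\reg I(G^c)\ge4$, the top degree $j\ge2$ is $\reg I(G^c)-2$, and the depth is $n-\reg I(G^c)$.

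It remains to treat degrees $j\le1$. I will show that $\widetilde H^1$ is attained, so the depth is at most $n-3$. I will also show that degree-$1$ contributions never lower the depth below $n-3$. The second point is automatic, since degree $\le1$ contributes depth at least $n-3$.

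For the upper bound $\depth\le n-3$ I have two routes. The first is Corollary~\ref{almost non-increasing} with $t=2$, which gives $\depth S/I^{(2)}\ge\depth S/I^{(3)}$; that only bounds from the wrong side. So I will produce a vector ${\bf a}$ explicitly. Since $G$ is not a star or a matching, I look for an induced configuration on which $\Delta(\mathcal H^{(3)}_{\bf a})$ restricted to $U$ has a $1$-cycle. The natural candidates are an induced $P_4$, $C_4$, or $P_3\sqcup K_1$, or a triangle with extra vertices. For instance, with $\bf a$ equal to $1$ on a $4$-set inducing $C_4$ or $P_4$ in $G$, $0$ elsewhere, and $U=[n]$, Lemma~\ref{deletion-lemma}(a)-style reasoning should reduce the complex to $\operatorname{Cl}(G_W)$ together with coning/split-graph links. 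The links are only controlled in degrees $\ge2$, so degree $1$ needs a direct Mayer--Vietoris check. A cleaner alternative is to use negative entries: choose $\supp_-{\bf a}$ to be a suitable vertex, restricting to $\Delta_{\bf a^+}$ of the link. Here a local computation on at most $4$ or $5$ vertices shows $\widetilde H_1\ne0$, or equivalently $\widetilde H_0$ with one negative coordinate. I will carry this out by cases: $G$ contains a triangle, or $G$ is triangle-free but not a star forest of the allowed types. In the triangle-free case $I_c(G)=J(G^c)$, and one can instead invoke the known description of $\depth S/J(G^c)^{(3)}$ from \cite{hktt2017} to get $\le n-3$.

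The main obstacle is this degree-$\le1$ analysis, that is, showing a nonzero $\widetilde H^1$ (depth exactly $\le n-3$) whenever $G$ is neither a star nor a matching. Lemma~\ref{deletion-lemma} only handles $i\ge2$, and $\widetilde H_1$ of the link of a zero-weight vertex need not vanish. I would first try to reduce via a colon/sum argument as in Theorem~\ref{PowerTwo}, comparing with $\depth S/I_c(G)^{(2)}=n-3$ when some component is not a star. For the remaining cases — star forests with at least two components, not all $K_2$ — I would exhibit an explicit ${\bf a}$ using the ordered matching from the proof of Corollary~\ref{asymptotic}(b).
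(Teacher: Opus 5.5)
Your reduction to graphs without isolated vertices is fine, and so is your treatment of the degrees $j\ge 2$ via Lemma~\ref{hyperlemma} and Lemma~\ref{deletion-lemma}(d). Both agree with the paper. They give $\depth S/I_c(G)^{(3)}=n-\reg I(G^c)$ when $\reg I(G^c)\ge 4$, and $\depth S/I_c(G)^{(3)}\ge n-3$ when $\reg I(G^c)\le 3$. One small inaccuracy: an isolated vertex of $G$ is an isolated vertex of $\operatorname{Cl}(G)$, not a cone point, so it does add $\widetilde H_0$. This is harmless, since it only contributes $2$, which is absorbed by $\max\{3,\cdot\}$.

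\textbf{The gap.} The missing step is the upper bound $\depth S/I_c(G)^{(3)}\le n-3$ when $\reg I(G^c)\le 3$. You label it the main obstacle but only list unexecuted plans: an explicit ${\bf a}$ with an unchecked Mayer--Vietoris computation, a colon argument, and an unspecified appeal to \cite{hktt2017}. Worse, you discard the tool that closes it, on a false ground. Corollary~\ref{almost non-increasing} with $t=2$, $k=3$ gives $\depth S/I^{(3)}\le \depth S/I^{(2)}$. This is an \emph{upper} bound on $\depth S/I^{(3)}$, exactly the side you need. Combined with Theorem~\ref{PowerTwo}, it gives $\depth S/I^{(3)}\le n-3$ whenever $G$ is not a disjoint union of stars; this is the paper's argument.

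\textbf{The remaining star forests.} These have at least two components, at least one with $\ge 3$ vertices. Here $G$ is triangle-free and $I_c(G)=J(G^c)$. The proof of Corollary~\ref{asymptotic}(b) gives $\nu_0(G^c)=2$, so \cite[Theorem 3.4]{hktt2017} applies at $k=3=2\nu_0(G^c)-1$ and yields depth $n-3$. Also $\reg I(G^c)=2$, since $G$ is chordal.

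\textbf{Completing your own split.} Your triangle/triangle-free case split could also be finished cheaply.
\begin{itemize}
\item If $G$ has a triangle $T$, then $P_T$ is a minimal, hence associated, prime of $S/I^{(3)}$. So $\depth S/I^{(3)}\le \dim S/P_T=n-3$. This also covers $G=K_n$, which you never treat separately.
\item If $G$ is triangle-free, the independence number of $G^c$ is at most $2$, so $\nu_0(G^c)\le 2$. Then \cite[Theorem 3.4]{hktt2017} applies at $k=3$ and gives depth $n-1-\nu_0(G^c)$. One checks that $\nu_0(G^c)=2$ exactly when $G$ has a non-dominating vertex of degree $\ge 2$. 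For triangle-free $G$ without isolated vertices, this means $G$ is neither a star nor a matching.
\end{itemize}

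As written, your proposal does not establish the upper bound, so it does not prove the theorem.
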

\begin{proof}
If $G-A\cong K_2$, then by Corollary~\ref{asymptotic}(a), the assertion holds. So we assume that $G-A\not\cong K_2$. Set $I=I_{c}(G)$. First assume that $G$ has no isolated vertices, that is $A=\emptyset$.
If $G$ is a star, or a disjoint union of edges, since $G\not\cong K_2$, by Corollary~\ref{asymptotic}(a), the equality holds. If $G$ is a complete graph, then $I_{c}(G)$ is the Stanley-Reisner ideal of a matroid. So by~\cite[Theorem 2.1]{v2011}, the ring $S/I_{c}(G)^{(3)}$ is Cohen--Macaulay and so $\depth S/I_{c}(G)^{(3)}=n-\height(I_c(G))=n-3=n-\max\{3,\reg I(G^c)\}$, as $I(G^c)=(0)$. 

Now, assume that $G$ is neither a star graph, nor a disjoint union of edges or a complete graph. Then $I(G^{c})\neq(0)$, and hence, from Hochster's formula, we have
\begin{equation}\label{depthReg}
\reg I(G^{c})=\max\{i+2\,\,: \widetilde{H}^{i}(\Delta(I(G^{c}))_{W});K)\neq0\mbox{ for some }W\subset[n]\}.
\end{equation}
Assume $\reg I(G^{c})\geq4$. We need to show that  $\depth S/I^{(3)}=n-\reg I(G^{c})$. By (\ref{depthReg}), Lemma \ref{deletion-lemma}(d) and  Lemma \ref{hyperlemma}, there exists ${\bf a}\in\mathbb{Z}^{n}$ such that $$H_{\mathfrak{m}}^{n-\reg(I(G^{c}))}(S/I^{(3)})_{\bf a}\cong\widetilde{H}^{\reg I(G^{c})-2}(\Delta(\mathcal{H}_{\bf a}^{(3)}(G));K)\neq0,$$ which gives $\depth S/I^{(3)}\leq n-\reg I(G^{c})$. Let $i<n-\reg I(G^{c})$, and by contradiction assume that $H_{\mathfrak{m}}^{i}(S/I^{(3)})_{\bf a}\neq0$ for some ${\bf a}\in\mathbb{Z}^{n}$. Then Lemma \ref{hyperlemma} and Lemma \ref{deletion-lemma}(d) imply that $\widetilde{H}^{n-i-2}(\Delta(I(G^{c}))_{W});K)\neq0$ for some $W\subset [n]$. Hence, by (\ref{depthReg}) we obtain $\reg I(G^{c})\geq n-i>\reg I(G^{c})$, a contradiction. Thus $H_{\mathfrak{m}}^{i}(S/I^{(3)})_{\bf a}=0$ for all $i<n-\reg I(G^{c})$ and so
 $\depth S/I^{(3)}\geq n-\reg I(G^{c})$. This proves the equality for the case that $\reg I(G^{c})\geq4$.  
Next, suppose that $\reg I(G^{c})\leq3$. From Hochster's formula, we know $\widetilde{H}^{i}(\Delta(I(G^{c}))_{B};K)=0$ for every $B\subset[n]$ and $i\geq2$. Hence, from Lemmas \ref{hyperlemma} and \ref{deletion-lemma}(d), for any $0\leq i<n-3$ and ${\bf a}\in\mathbb{Z}^{n}$, we obtain that 
$$H_{\mathfrak{m}}^{i}(S/I^{(3)})_{\bf a}\cong\widetilde{H}^{n-i-2}(\Delta(\mathcal{H}_{\bf a}^{(3)}(G)); K)=0,$$
which implies that $\depth S/I^{(3)}\geq n-3$.
We show that $\depth S/I^{(3)}\leq n-3$. First assume that $G$ is not a disjoint union of star graphs. Then by 
Theorem~\ref{PowerTwo} and Corollary~\ref{almost non-increasing}, applied to $t=2$ we obtain $\depth S/I^{(3)}\leq \depth S/I^{(2)}=n-3$, and we are done. Now, let $G$ be a disjoint union of star graphs  $K_{1,n_i}$, $1\leq i\leq p$. By our assumption that $G$ is neither a star graph nor disjoint union of edges, we have $p\geq 2$ and $n_i\geq 2$ for some $i$. Then as was shown in the proof of Corollary~\ref{asymptotic}(b), we have $\nu_0(G^c)=2$ and by \cite[Theorem 3.4]{hktt2017}, $\depth S/I^{(k)}=\depth S/J(G^c)^{(k)}=n-\nu_0(G^c)-1$ for any $k\geq 2\nu_0(G^c)-1$. This shows that $\depth S/I^{(3)}=n-3$. Since $G$ is chordal, $\reg I(G^c)=2$, which shows the desired equality.

Now, suppose $A\neq \emptyset$, and let $H=G-A$. By \cite[Corollary 3.2]{KV2026} we have $\reg I(G^c)=\reg I(H^c)$. Since $H\not\cong K_2$, this together with (\ref{depth-isolated}) and the first part of proof gives the desired formula. 
\end{proof}

Next, we determine the depth of the fourth symbolic power of a complementary edge ideal, using the following lemma. 

\begin{lemma}\label{deletion-lemma 2}
Let $G$ be a graph on the vertex set $[n]$ without isolated vertices and  ${\bf a}\in \mathbb{Z}^n$. With the notation introduced before Lemma  \ref{hyperlemma},  let $\Delta=\Delta(\mathcal{H}_{\bf a}^{(4)}(G))$,  $W\subseteq U$ and  $W'=W\setminus \{v:\,  a_v=0\}$. Then $link_{\Delta_{W^{\prime}}}v$ is the clique complex of some split graph for any $v\in W^{\prime}$ with $a_{v}=1$. In particular, for $i\geq2$, we have $\widetilde{H}_{i}(\Delta_{W};K)\cong\widetilde{H}_{i}(\Delta_{W^{\prime}};K)=0$. 
\end{lemma}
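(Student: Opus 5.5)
The plan is to follow the proof of Lemma~\ref{deletion-lemma}(a) closely, now with threshold $k-1=3$. A set is a hyperedge of $\mathcal{H}_{\bf a}^{(4)}(G)$ exactly when it is a non-edge $\{i,j\}$ of $G$ with $a_i+a_j\le 3$, or a triangle $\{i,j,\ell\}$ of $G$ with $a_i+a_j+a_\ell\le 3$. Every vertex of $W'$ satisfies $a_u\geq 1$. Fix $v\in W'$ with $a_v=1$, set $L=\link_{\Delta_{W'}}v$, and split the vertices of $L$ into $I_1=\{u: a_u=1\}$ and $C=\{u: a_u\ge 2\}$.

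\textbf{Step 1: the $1$-skeleton of $L$ is split.} Take distinct $i,j\in I_1$. Since $a_i+a_v=2\le 3$ and $\{i,v\}\in\Delta$, we get $\{i,v\}\in E(G)$, and likewise $\{j,v\}\in E(G)$. If $\{i,j\}\notin E(G)$, then $\{i,j\}$ is a hyperedge. If $\{i,j\}\in E(G)$, then $\{i,j,v\}$ is a triangle with weight $3$, hence a hyperedge. Either way $\{i,j\}\notin L$, so $I_1$ is independent. For distinct $i,j\in C$, any pair has weight at least $4$ and any triple containing $i,j$ has weight at least $5$. So $\{i,j,v\}\in\Delta_{W'}$ and $C$ is a clique. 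Edges between $I_1$ and $C$ depend on $G$, but that does not affect splitness.

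\textbf{Step 2: $L$ is flag.} Let $F$ be a clique of $L^{(1)}$; then $|F\cap I_1|\le 1$. We check that $F\cup\{v\}$ contains no hyperedge. Every $2$-subset is a face, either because it is an edge of $L$ or because it has the form $\{u,v\}$ with $u\in L$. Every $3$-subset contains at most two vertices of weight $1$ (namely $v$ and possibly one element of $I_1$), so its weight is at least $1+1+2=4>3$. Hence $L=\operatorname{Cl}(L^{(1)})$ is the clique complex of a split graph.

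\textbf{Step 3: homology.} Since a split graph is chordal and cochordal, \cite[Theorem 1]{f1990} and Hochster's formula give $\widetilde H_i(L;K)=0$ for $i\ge1$. We then apply the Mayer--Vietoris sequence for $\Delta_{W'}=\Delta_{W'\setminus\{v\}}\cup\st v$, exactly as in (\ref{HiLink}), to get $\widetilde H_i(\Delta_{W'\setminus\{v\}};K)\cong \widetilde H_i(\Delta_{W'};K)$ for $i\ge2$. The argument in Steps 1--3 applies to any subset of $W'$, since it only uses the weights. So we delete the weight-$1$ vertices one at a time, and likewise delete the weight-$0$ vertices of $W$ using Lemma~\ref{deletion-lemma}(a), which holds for arbitrary $k$. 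This yields $\widetilde H_i(\Delta_W;K)\cong\widetilde H_i(\Delta_{W'};K)\cong\widetilde H_i(\Delta_{W''};K)$ for $i\ge 2$, where $W''=\{u\in W: a_u\ge2\}$.

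\textbf{Step 4: conclusion.} On $W''$ no hyperedge survives, because pairs have weight at least $4$ and triples at least $6$. Thus $\Delta_{W''}$ is a full simplex, or $\{\emptyset\}$ when $W''=\emptyset$, and in both cases its homology vanishes in degrees $i\ge2$.

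The main obstacle is Step 1. Unlike the $k=3$ case, adjacency in $L$ between $I_1$ and $C$ is not purely numerical: for $a_i=1$ and $a_j=2$, the pair $\{i,j\}$ is a face exactly when $\{i,j\}\in E(G)$. One has to see that this ambiguity is harmless, because the independent-set/clique split is still forced by the weights.
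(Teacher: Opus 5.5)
Your proposal is correct and follows the paper's proof essentially step for step. You use the same split of the link into weight-$1$ and weight-$\ge 2$ vertices, the same weight check on $3$-subsets for flagness, the same Fröberg/Hochster vanishing with Mayer--Vietoris deletion over arbitrary subsets of $W'$ (after removing weight-$0$ vertices via Lemma~\ref{deletion-lemma}(a)), and the same final observation that $\Delta_{W''}$ is a simplex.

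The differences are cosmetic. You first force $\{i,v\},\{j,v\}\in E(G)$ and then case on $\{i,j\}$, whereas the paper cases on whether $\{v,p,q\}$ is a triangle. Your count $1+1+2$ also makes explicit why link-adjacent $p,q$ satisfy $a_p+a_q\ge 3$, which the paper uses tacitly.
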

\begin{proof}
Set $A=\{v\,\,: a_{v}=1\}$ and $B=W^{\prime}\setminus A$. First, we prove that $\link_{\Delta_{T}}v$ is the clique complex of some split graph for any $T\subset W^{\prime}$ and $v\in T\cap A$. Fix $p,q\in\ V(\link_{\Delta_{T}}v)\cap A$. If $\{v,p,q\}\in\mathcal{T}(G)$, then, since $a_{v}+a_{p}+a_{q}=3$, we get $\{v,p,q\}\in E(\mathcal{H}_{\bf a}^{(4)}(G))$. On the other hand, if $\{v,p,q\}\notin\mathcal{T}(G)$, then, we can take vertices $r,s\in\{v,p,q\}$ such that $\{r,s\}\notin E(G)$, and hence, from $a_{r}+a_{s}=2$, we get $\{r,s\}\in E(\mathcal{H}_{\bf a}^{(4)}(G))$. Therefore, we obtain $\{v,p,q\}\notin\Delta_{T}$, which means $\{p,q\}\notin\link_{\Delta_{T}}v$. Thus $V(\link_{\Delta_{T}}v)\cap A$ is an independent set of $(\link_{\Delta_{T}}v)^{(1)}$. Next, fix $p,q\in V(\link_{\Delta_{T}}v)\cap B$. Then $\{v,p\},\{v,q\}\in\Delta_T$, and hence, $\{v,p\},\{v,q\}\notin E(\mathcal{H}_{\bf a}^{(4)}(G))$. Also, since $a_{p}+a_{q}\geq4$ and $a_{v}+a_{p}+a_{q}\geq5$, we have $\{p,q\},\{v,p,q\}\notin E(\mathcal{H}_{\bf a}^{(4)}(G))$. Hence, we see that $\{v,p,q\}\in\Delta_{T}$, which implies that $\{p,q\}\in\link_{\Delta_{T}}v$, and thus, $V(\link_{\Delta_{T}}v)\cap B$ is a clique. Therefore, $(\link_{\Delta_{T}}v)^{(1)}$ is a split graph. 

Next, we prove that $\link_{\Delta_{T}}v=\operatorname{Cl}((\link_{\Delta_{T}}v)^{(1)})$. Fix a clique $F\in(\link_{\Delta_{T}}v)^{(1)}$. As in the proof of Lemma \ref{deletion-lemma}(a), we know that $C\in\Delta_{W^{\prime}}$ for any $C\in\binom{F\cup\{v\}}{2}$. Now, fix $C\in \binom{F\cup\{v\}}{3}$. Since $v\notin F$, we can take $p,q\in C\cap F$. Write $C\setminus\{p,q\}=\{r\}$. Then, since $\{p,q\}\in\link_{\Delta_{T}}v$ and $r\in W^{\prime}$, we see that $a_{p}+a_{q}+a_{r}\geq 3+1=4$, which implies that $C\notin E(\mathcal{H}_{\bf a}^{(4)}(G))$. Hence, $F\cup\{v\}\in\Delta_{W^{\prime}}$, which means that $F\in\link_{\Delta_{T}}v$. Therefore, we obtain $\link_{\Delta_{T}}v=\operatorname{Cl}((\link_{\Delta_{T}}v)^{(1)})$. 

Applying the same Mayer–Vietoris exact sequence as in the proof of Lemma \ref{deletion-lemma}(a), we get an isomorphism $\widetilde{H}_{i}(\Delta_{W^{\prime}\setminus\{v\}};K)\cong\widetilde{H}_{i}(\Delta_{W^{\prime}};K)$ for each $i\geq2$. Iterating this isomorphism, as the vertices of $A$ are successively removed from $W'$,
we obtain $\widetilde{H}_{i}(\Delta_{W^{\prime}};K)\cong\widetilde{H}_{i}(\Delta_{B};K)$ for each $i\geq2$.
On the other hand, since $a_u\geq 2$ for any $u\in B$, it follows that $B\in \Delta$ and so $\Delta_{B}$ is a simplex. Combining this with Lemma~\ref{deletion-lemma}(a) we get $\widetilde{H}_{i}(\Delta_{W};K)\cong\widetilde{H}_{i}(\Delta_{W^{\prime}};K)\cong\widetilde{H}_{i}(\Delta_{B};K)=0$ for each $i\geq2$, as desired. 
\end{proof}

\begin{thm}\label{4th symbolic power}
Let $G$ be a graph on the vertex set $[n]$ with $n\geq 3$, and let $A$ be the set of isolated vertices of $G$. Then, we have 
$$\depth S/I_{c}(G)^{(4)}=
\begin{cases}
n-1, & \text{if } G-A\cong K_2,\\[2mm]
n-2, & \parbox[t]{8cm}{if $G-A$ is a star or a disjoint union of edges,\\
and $G-A\ncong K_2$,}\\[2mm]
n-3, & \text{otherwise}.
\end{cases}
$$
\end{thm}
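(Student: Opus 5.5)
The plan is to follow the template of the proof of Theorem~\ref{3rd symbolic power}, with Lemma~\ref{deletion-lemma 2} playing the role that Lemma~\ref{deletion-lemma} played there.

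\textbf{Reduction to $A=\emptyset$ and the first two cases.} If $G-A\cong K_2$, or $G-A$ is a star or a disjoint union of edges, the formula is exactly Corollary~\ref{asymptotic}(a). Otherwise set $H=G-A\not\cong K_2$ and use (\ref{depth-isolated}). This reduces everything to showing $\depth S/I_c(G)^{(4)}=n-3$ when $G$ has no isolated vertices and is neither a star nor a disjoint union of edges.

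\textbf{Lower bound $\depth S/I_c(G)^{(4)}\ge n-3$.} By Lemma~\ref{hyperlemma}, for every ${\bf a}\in\mathbb Z^n$ with $U\neq\emptyset$,
\[
H_{\mathfrak m}^{i}(S/I^{(4)})_{\bf a}\cong \widetilde H^{n-i-2}(\Delta(\mathcal H^{(4)}_{\bf a}(G));K).
\]
For $i<n-3$ the degree is $n-i-2\ge 2$. Lemma~\ref{deletion-lemma 2}, applied with $W=U$, says that $\widetilde H_j(\Delta_U;K)=0$ for all $j\ge2$, where $\Delta_U$ is the whole complex. Over a field, cohomology vanishes together with homology, so all these local cohomology components vanish. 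If $U=\emptyset$ they vanish trivially. Hence $H^i_{\mathfrak m}(S/I^{(4)})=0$ for $i<n-3$.

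\textbf{Upper bound $\depth S/I_c(G)^{(4)}\le n-3$.} I split into two cases.
\begin{itemize}
\item If some component of $G$ is not a star, Theorem~\ref{PowerTwo} gives $\depth S/I^{(2)}=n-3$. Corollary~\ref{almost non-increasing} with $t=2$ (so $k=4\ge 3$) then gives $\depth S/I^{(4)}\le n-3$.
\item Otherwise $G$ is a disjoint union of at least two stars, and at least one has $\ge 2$ leaves. Then $G$ is triangle-free, so $I=J(G^c)$. The proof of Corollary~\ref{asymptotic}(b) gives $\nu_0(G^c)=2$. By \cite[Theorem 3.4]{hktt2017}, $\depth S/I^{(k)}=n-3$ for all $k\ge 2\nu_0(G^c)-1=3$, in particular for $k=4$.
\end{itemize}
Alternatively, in the second case one can apply Corollary~\ref{almost non-increasing} with $t=3$ (so $k=4\ge5$ fails). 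That does not work, which is why the cover-ideal argument is used.

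\textbf{Main obstacle.} The delicate point is the vanishing in the lower bound. It relies on Lemma~\ref{deletion-lemma 2} holding for every ${\bf a}$, including vectors with negative entries, where $U$ is a proper subset of $[n]$. I would check that the lemma's hypotheses (nonnegativity on $U$, which follows from $U=[n]\setminus\supp_-{\bf a}$) are satisfied. I would also check that Lemma~\ref{hyperlemma} is stated with cohomology, so the passage from vanishing homology to vanishing cohomology over the field $K$ is immediate.
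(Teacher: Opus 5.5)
Your proposal is correct and follows essentially the paper's own proof. The lower bound combines Lemma~\ref{deletion-lemma 2} with Lemma~\ref{hyperlemma}. The upper bound reuses the argument from Theorem~\ref{3rd symbolic power}: Theorem~\ref{PowerTwo} together with Corollary~\ref{almost non-increasing} at $t=2$, or \cite[Theorem 3.4]{hktt2017} when $G$ is a disjoint union of stars. The only differences are cosmetic. The paper treats $G\cong K_n$ separately via matroid Cohen--Macaulayness, but your general argument already covers that case, and your aside about $t=3$ is unnecessary and can be deleted.
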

\begin{proof}
If $G-A\cong K_2$, then by Corollary~\ref{asymptotic}(a), the assertion holds. So we assume that $G-A\ncong K_2$. Set $I=I_{c}(G)$. First assume that $A=\emptyset$.  If $G$ is a star, or a disjoint union of edges, or a complete graph, then the same argument as in the proof of Theorem \ref{3rd symbolic power} applies.  

Now, assume that $G$ is neither a star graph, nor a disjoint union of edges or a complete graph. Fix ${\bf a}\in\mathbb{Z}^{n}$, and set $\Delta=\Delta(\mathcal{H}_{\bf a}^{(4)}(G))$. From Lemma \ref{deletion-lemma 2}, we know $\widetilde{H}^{i}(\Delta;K)=0$ for every $i\geq2$. Fix $0\leq i<n-3$. If $U\neq\emptyset$, then, from Lemma \ref{hyperlemma}, we obtain $H_{\mathfrak{m}}^{i}(S/I^{(4)})_{\bf a}\cong\widetilde{H}^{n-i-2}(\Delta;K)=0$. If $U=\emptyset$, then we can verify that $H_{\mathfrak{m}}^{i}(S/I^{(4)})_{\bf a}=0$. Therefore, we get $H_{\mathfrak{m}}^{i}(S/I^{(4)})=0$, which implies that $\depth S/I^{(4)}\geq n-3$. The converse inequality $\depth S/I^{(4)}\leq n-3$ follows by applying verbatim the argument used in the proof of Theorem \ref{3rd symbolic power}.

The case $A\neq\emptyset$ follows from the first part of the proof applied to $H=G-A$ together with (\ref{depth-isolated}).
\end{proof}

As a corollary, we completely determine the difference of depths of the third and the fourth symbolic powers of a complementary edge ideal. 

\begin{cor}\label{comparison of 3rd and 4th}
Let $G$ be a graph on the vertex set $[n]$ with $n\geq 3$. Then 
$$\depth S/I_{c}(G)^{(4)}-\depth S/I_{c}(G)^{(3)}=\max\{0,\reg I(G^{c})-3\}.$$
\end{cor}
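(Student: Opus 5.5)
The plan is a direct case-by-case comparison of Theorems \ref{3rd symbolic power} and \ref{4th symbolic power}, which share the same case split. Let $A$ be the set of isolated vertices of $G$ and set $H=G-A$.

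\textbf{Case 1: $H\cong K_2$.} Both depths equal $n-1$, so the difference is $0$. We must check that $\max\{0,\reg I(G^c)-3\}=0$. Here $G^c$ is the complete graph on $[n]$ with one edge removed, so it is cochordal. Alternatively, $G$ is chordal, and Fröberg's theorem (as used in the proof of Lemma \ref{deletion-lemma}) gives $\reg I(G^c)\le 2$.

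\textbf{Case 2: $H\not\cong K_2$ is a star or a disjoint union of edges.} Both depths equal $n-2$, so the difference is $0$. Again $G$ is chordal: a star plus isolated vertices, or a matching plus isolated vertices, has no induced cycle. Hence $I(G^c)$ has a $2$-linear resolution by Fröberg, or $I(G^c)=0$ when $G$ is complete, which cannot occur here. In either case $\reg I(G^c)\le 2\le 3$, and the right-hand side is $0$. If $I(G^c)=(0)$ the regularity convention makes the max $0$ anyway. This argument matches the one at the end of the proof of Theorem \ref{3rd symbolic power}.

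\textbf{Case 3: otherwise.} The depths are $n-\max\{3,\reg I(G^c)\}$ and $n-3$, so the difference is $\max\{3,\reg I(G^c)\}-3=\max\{0,\reg I(G^c)-3\}$. When $G$ is complete, $I(G^c)=(0)$ and we interpret $\reg$ as at most $3$, as in the proof of Theorem \ref{3rd symbolic power}.

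The only step needing any care is confirming $\reg I(G^c)\le 3$ in Cases 1 and 2. This is handled by chordality of $G$, and it is the sole genuine obstacle, which is mild.
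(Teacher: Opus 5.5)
Your proposal is correct and matches how the paper obtains this corollary: subtract the formulas of Theorems~\ref{3rd symbolic power} and~\ref{4th symbolic power} case by case, since they share the same case split. Your extra check that $\reg I(G^c)\le 2$ in the first two cases is right. There $G$ is a forest, hence chordal, so Fr\"oberg's theorem applies, exactly as in the paper's own argument for disjoint unions of stars.
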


Combining Theorems \ref{PowerTwo}, \ref{4th symbolic power}, \ref{comparison of 3rd and 4th} and Corollary \ref{almost non-increasing}, we obtain the following inequalities for the symbolic depth functions of $I_{c}(G)$.

\begin{cor}\label{comarison-all}
Let $G$ be a graph on $n\geq3$ vertices. Then we have 
$$\depth S/I_{c}(G)^{(2)}\geq\depth S/I_{c}(G)^{(4)}\geq\depth S/I_{c}(G)^{(3)}\geq\depth S/I_{c}(G)^{(k)}\mbox{ for all }k\geq5.$$
\end{cor}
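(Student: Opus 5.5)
The chain has three inequalities, and I would prove each one separately from results already in the paper. Throughout I use $n\ge3$. The degenerate cases are handled first.

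\textbf{Degenerate cases.} If $G-A\cong K_2$, every symbolic power has depth $n-1$ by Corollary~\ref{asymptotic}(a). If $G-A$ is a star or a disjoint union of edges, every symbolic power has depth $n-2$ by the same corollary. In both cases all the terms are equal and there is nothing to prove. So I assume from now on that $G-A$ is neither a star nor a disjoint union of edges (and in particular $G-A\not\cong K_2$).

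\textbf{First inequality.} I compare Theorem~\ref{PowerTwo} with Theorem~\ref{4th symbolic power}. In the remaining case Theorem~\ref{4th symbolic power} gives $\depth S/I_c(G)^{(4)}=n-3$. Theorem~\ref{PowerTwo} gives $\depth S/I_c(G)^{(2)}\in\{n-2,n-3\}$, so $\depth S/I_c(G)^{(2)}\ge n-3$. Hence $\depth S/I_c(G)^{(2)}\ge\depth S/I_c(G)^{(4)}$.

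Alternatively, this follows directly from Corollary~\ref{almost non-increasing} with $t=2$, since $4\ge 2\cdot2-1=3$.

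\textbf{Second inequality.} Corollary~\ref{comparison of 3rd and 4th} gives
\[
\depth S/I_c(G)^{(4)}-\depth S/I_c(G)^{(3)}=\max\{0,\reg I(G^c)-3\}\ge0,
\]
so $\depth S/I_c(G)^{(4)}\ge\depth S/I_c(G)^{(3)}$.

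\textbf{Third inequality.} I apply Corollary~\ref{almost non-increasing} with $t=3$. This gives $\depth S/I_c(G)^{(3)}\ge\depth S/I_c(G)^{(k)}$ for every $k\ge 2\cdot3-1=5$, which is exactly the required range.

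The only point needing care is that the range $k\ge5$ produced by the corollary with $t=3$ matches the statement exactly. This works because $\bight I_c(G)\le3$, by Theorem~\ref{min prime}, which is what makes the threshold $2t-1$. No step looks like a real obstacle.
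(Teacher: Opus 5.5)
Your proof is correct and follows the paper's argument: the third inequality comes from Corollary~\ref{almost non-increasing} with $t=3$, and the first two come from Theorems~\ref{PowerTwo} and \ref{4th symbolic power} together with Corollary~\ref{comparison of 3rd and 4th}. Your alternative for the first inequality, Corollary~\ref{almost non-increasing} with $t=2$ (since $4\geq 3$), is also valid and removes the need for the case distinction.
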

\begin{proof}
The inequality $\depth S/I_{c}(G)^{(3)}\geq\depth S/I_{c}(G)^{(k)}$, for all $k\geq5$, follows from Corollary \ref{almost non-increasing}. The first two inequalities are obtained by combining Theorem \ref{PowerTwo}, Theorem~\ref{4th symbolic power} and Corollary \ref{comparison of 3rd and 4th}.  
\end{proof}

In the following, we study the behavior of depth of symbolic powers of $I_c(G)$ under the multi-whiskering operation. 
Multi-whisker graphs were introduced by Muta, Pournaki and Terai in \cite{mpt2025} as a generalization of whisker graphs. A whisker graph was introduced in \cite{v1990}. Given a graph $G_0$ on the vertex set  $X_{[h]}=\{x_{1}, \ldots, x_{h}\}$ the \textit{whisker graph associated with} $G_{0}$ is $G_{1}=G_{0}[1,\ldots, 1]$ on the vertex set $X_{[h]}\cup Y_{[h]}$, where $Y_{[h]}=\{y_{1},\ldots, y_{h}\}$ and the edge set $$E(G_{1})=E(G_{0})\cup\{\{x_{1},y_{1}\},\ldots, \{x_{h},y_{h}\}\}.$$ Given positive integers $n_{1},\ldots, n_{h}$, the \textit{multi-whisker graph associated with } $G_{0}$ is $G=G_{0}[n_{1},\ldots, n_{h}]$ on the vertex set 
$$X_{[h]}\cup Y, \mbox{ where } Y=\{y_{1,1},\ldots, y_{1, n_{1}}\}\cup\cdots\cup\{y_{h,1},\ldots, y_{h,n_{h}}\}$$ and the edge set $$E(G)=E(G_{0})\cup\{\{x_1,y_{1,1}\},\ldots, \{x_{1},y_{1,n_{1}}\},\ldots, \{x_{h},y_{h,1}\},\ldots, \{x_{h},y_{h,n_{h}}\}\}.$$

In the proof of Theorem~\ref{multi-whisker}, we make use of the notion of a $d$-Leray simplicial complex, introduced in~\cite{km2006}. Recall that a simplicial complex $\Delta$ is said to be {\em $d$-Leray} over a field $K$, if $\widetilde{H}_i(\Gamma;K)=0$ for all induced subcomplexes $\Gamma\subset \Delta$ and all $i\geq d$. Let $L_K(\Delta)$ denote the minimal $d$ such that $\Delta$ is $d$-Leray over $K$.  

\begin{thm}\label{multi-whisker}
Let $G_{0}$ be a graph on the vertex set $X_{[h]}$ without isolated vertices, where $h\geq3$, let $G=G_{0}[n_{1},\ldots, n_{h}]$ be a multi-whisker graph for positive integers $n_{1},\ldots, n_{h}$, and let $T=K[V(G)]$. 
Then we have 
$$\sum_{i=1}^{h}n_{i}+\depth S/I_{c}(G_{0})^{(k)}-2\leq\depth T/I_{c}(G)^{(k)}\leq \sum_{i=1}^{h}n_{i}+\depth S/I_{c}(G_{0})^{(k)}.$$
\end{thm}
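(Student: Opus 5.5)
The plan is to compare the hypergraph complexes $\Delta(\mathcal{H}_{\bf a}^{(k)}(G))$ of Lemma~\ref{hyperlemma} for $G$ with those for $G_0$, and then read off depth through Takayama's formula. Write $N=\sum_i n_i$, so that $|V(G)|=h+N$. By Lemma~\ref{hyperlemma},
\[
\depth T/I_c(G)^{(k)}=\min\{\, h+N-2-j : \widetilde H^{j}(\Delta(\mathcal{H}^{(k)}_{\bf b}(G));K)\neq 0 \text{ for some } {\bf b}\,\}.
\]
The same holds for $G_0$ with $h$ in place of $h+N$. So the statement is equivalent to
\[
j_0-2\le j_{\max}(G)\le j_0+?,
\]
where $j_{\max}$ is the largest degree of nonvanishing reduced cohomology among all ${\bf b}$. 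Concretely, we need
\[
j_{\max}(G)\in [\,j_{\max}(G_0),\, j_{\max}(G_0)+2\,].
\]

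\textbf{Structure of the edges.} In $G$ there are no triangles involving whisker vertices, since each $y_{i,s}$ has degree one. Also $\{y_{i,s},z\}$ is a nonedge of $G$ for every $z\neq x_i$. Hence, for ${\bf b}=({\bf a},{\bf c})$ with ${\bf a}$ on $X$ and ${\bf c}$ on $Y$, the edges of $\mathcal{H}^{(k)}_{\bf b}(G)$ are:
\begin{itemize}
\item the edges of $\mathcal{H}^{(k)}_{\bf a}(G_0)$;
\item the pairs $\{y_{i,s},z\}$ with $z\neq x_i$ and $c_{i,s}+b_z\le k-1$.
\end{itemize}
Thus $\Delta_G:=\Delta(\mathcal{H}^{(k)}_{\bf b}(G))$ restricted to $X\cap U$ equals $\Delta_0:=\Delta(\mathcal{H}^{(k)}_{\bf a}(G_0))$. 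The whisker vertices are attached only through graph-type (2-uniform) conditions.

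\textbf{Upper bound on depth.} Given ${\bf a}$ with $\widetilde H^j(\Delta_0)\ne0$, I choose ${\bf c}$ with all $c_{i,s}=k$. Then no pair involving $y_{i,s}$ is an edge, so each $y_{i,s}$ is a cone point of $\Delta_G$. That kills the homology, which is the wrong direction. Instead I take $c_{i,s}=-1$, i.e.\ I put all of $Y$ into $\supp_-$. Then $U$ sees only $X$, and $\Delta_G=\Delta_0$ on $U\cap X$. Lemma~\ref{hyperlemma} now yields nonvanishing of $H^{i}_{\mathfrak m}(T/I_c(G)^{(k)})$ with $i=(h+N)-j-2$, which is exactly $N$ plus the index for $G_0$. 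This gives
\[
\depth T/I_c(G)^{(k)}\le N+\depth S/I_c(G_0)^{(k)}.
\]
I need to check that Lemma~\ref{hyperlemma} applies, i.e.\ that $G$ has no isolated vertices and the construction of $\mathcal{H}$ only depends on $U$; both hold.

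\textbf{Lower bound on depth.} This is the main obstacle: I must show that for every ${\bf b}$, nonvanishing $\widetilde H^{j}(\Delta_G)$ forces $j\le j_{\max}(G_0)+2$. Note that $\Delta_G$ is the intersection of two complexes on $U$:
\begin{itemize}
\item $\Delta_1$, the complex whose minimal nonfaces are the edges of $\mathcal{H}^{(k)}_{\bf a}(G_0)$, i.e.\ the join of $\Delta_0$ with a simplex on $Y\cap U$;
\item $\Delta_2$, the independence complex of the graph of whisker-type edges $\{y_{i,s},z\}$.
\end{itemize}
The Kalai--Meshulam theorem gives
\[
L_K(\Delta_1\cap\Delta_2)\le L_K(\Delta_1)+L_K(\Delta_2).
\]
For $\Delta_1$, since every induced subcomplex of a join with a simplex is either a cone or an induced subcomplex of $\Delta_0$, I get $L_K(\Delta_1)=L_K(\Delta_0)$. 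Moreover, $L_K(\Delta_0)-1=j_{\max}(G_0)$ once we vary over all ${\bf a}$. To see this, restrictions $\Delta_{0,W}$ are again of the form $\Delta(\mathcal{H}^{(k)}_{{\bf a}'}(G_0))$, obtained by sending the entries outside $W$ to $-1$; since the link description in Lemma~\ref{hyperlemma} changes, I should instead set those entries to a large value $\ge k$, making those vertices cone-free and deleted. I would verify this carefully.

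For $\Delta_2$, the graph of whisker-type edges is bipartite between $Y$ and the rest, and its vertices on the $Y$-side have neighbourhoods forming threshold-like nested sets determined by the values $b_z\le k-1-c_{i,s}$. Such a graph is co-chordal, or at least has independence complex that is $2$-Leray, via Fröberg's theorem \cite{f1990} as used in Lemma~\ref{deletion-lemma}(a). Hence $L_K(\Delta_2)\le 2$, giving
\[
j_{\max}(G)\le j_{\max}(G_0)+2,
\]
which is the left inequality. The step I expect to be delicate is proving that this bipartite whisker graph has a $2$-linear edge ideal, since the nested-neighbourhood (Ferrers-type) structure must be checked across different $i$ and must account for the excluded neighbour $x_i$.
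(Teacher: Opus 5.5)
Your framework is the paper's: the splitting $\Delta(\mathcal{H}^{(k)}_{\bf b}(G))=\Delta_1\cap\Delta_2$, the Kalai--Meshulam inequality, $L_K(\Delta_1)=L_K(\Delta_0)$, and the upper bound obtained by putting all of $Y$ into $\supp_-$ all appear there. In the restriction step your first instinct is correct. Setting the entries outside $W$ to $-1$ realizes $(\Delta_0)_W$, because Lemma~\ref{hyperlemma} holds for every ${\bf a}\in\mathbb{Z}^n$ and the hypergraph is just the induced one on $U$. Setting them $\ge k$ would instead make those vertices cone points and kill all homology, so drop that alternative.

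\textbf{The gap.} The step you flagged, $L_K(\Delta_2)\le 2$, is missing, and the route you suggest for it fails. Call your whisker-type graph $\Gamma_{\bf b}$, so $\Delta_2=\Delta(\Gamma_{\bf b})$. It is not bipartite: your own edge list contains every pair $\{y_{i,s},y_{j,t}\}$ with $c_{i,s}+c_{j,t}\le k-1$. More seriously, it need not be co-chordal, so Fr\"oberg's theorem cannot be invoked. Take $k=2$, $a_1=a_2=0$, $c_{1,1}=c_{2,1}=1$. Then:
\begin{itemize}
\item $\{x_1,y_{2,1}\}$ and $\{x_2,y_{1,1}\}$ are edges of $\Gamma_{\bf b}$;
\item $x_1y_{1,1}$ and $x_2y_{2,1}$ are excluded, being edges of $G$;
\item $x_1x_2$ is never a whisker-type pair;
\item $y_{1,1}y_{2,1}$ is not an edge, since $1+1>k-1$.
\end{itemize}
So $\Gamma_{\bf b}$ has an induced $2K_2$, and $\Delta_2$ restricted to these four vertices is a $4$-cycle with $\widetilde{H}_1\neq 0$. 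Hence $I(\Gamma_{\bf b})$ is not $2$-linear and $L_K(\Delta_2)\ge 2$. The excluded neighbour $x_i$ is exactly what destroys the nested-neighbourhood structure. A co-chordality argument would in fact have given $L_K(\Delta_2)\le 1$, which is false in general.

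\textbf{The fix.} You need a direct $2$-Leray argument, which is the core of the paper's proof. Induct on $|W\cap Y|$. If $W\cap Y=\emptyset$, then $(\Delta_2)_W$ is a simplex, since the $x$-vertices are pairwise non-adjacent in $\Gamma_{\bf b}$. Otherwise choose $y=y_{q,s_0}\in W$ with $c_{q,s_0}$ minimal among the whisker vertices of $W$. Every whisker vertex and every $x_j$ with $j\ne q$ lying in $W\setminus N[y]$ has weight at least $k-c_{q,s_0}$. By minimality, the remaining whisker vertices are then pairwise non-adjacent and each such $x_j$ is isolated. So $\Gamma_{\bf b}|_W-N[y]$ is a (possibly empty) star centred at $x_q$ plus isolated vertices, and its independence complex has no homology in degrees $\ge 1$. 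Mayer--Vietoris for $(\Delta_2)_W=\Delta(\Gamma_{\bf b}|_W-y)\cup\bigl(y*\Delta(\Gamma_{\bf b}|_W-N[y])\bigr)$ then gives the induction step in every degree $p\ge 2$. It gives nothing in degree $1$, which is consistent with the example above. With this lemma in place, the rest of your argument goes through unchanged.
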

\begin{proof}
Set $r=\sum_{i=1}^{h}n_{i}$. First, we prove that $$r+\depth S/I_{c}(G_{0})^{(k)}-2\leq\depth T/I_{c}(G)^{(k)}.$$ 
Fix ${\bf a}=(\alpha_{1},\ldots, \alpha_{h},\beta_{1,1},\ldots, \beta_{h,n_{h}})\in\mathbb{Z}^{h+r}$. Set $U=\{x_{j}\,\,: \alpha_{j}\geq0\}$ and $V=\{y_{i,s}\,\,: \beta_{i,s}\geq0\}$.
Define a graph $G_{\bf a}$ on the vertex set $U\cup V$ and with the edge set 
\[
\begin{aligned}
E(G_{\bf a})={}&
\{\{x_{i},y_{j,s}\} : i\neq j,\,
\alpha_{i}+\beta_{j,s}\leq k-1\} \\
&\cup
\{\{y_{i,s},y_{j,t}\} :
i\neq j \text{ or } s\neq t,\,
\beta_{i,s}+\beta_{j,t}\leq k-1\}.
\end{aligned}
\]
Then we see that $\Delta(\mathcal{H}_{\bf a}^{(k)}(G))=(\Delta(\mathcal{H}_{\bf a}^{(k)}(G_{0}))*2^{V})\cap\Delta(G_{\bf a})$. 
We claim that 
\begin{equation}\label{H_p=0}
\widetilde{H}_{p}(\Delta(G_{\bf a})_{W};K)=0, \, \, \, \text{ for all $W\subset U\cup V$, and all $p\geq2$.} 
\end{equation}
Fix $p\geq2$ and $W\subset U\cup V$. We use induction on $|Y\cap W|$. If $|Y\cap W|=0$, then we see that $\Delta(G_{\bf a})_{W}=\Delta(G_{\bf a}|_{W})$ is a simplex, which implies that $\widetilde{H}_{p}(\Delta(G_{\bf a})_{W};K)=0$. Suppose that $|Y\cap W|>0$ and take $y_{q, s_{0}}$ such that $\beta_{q,s_{0}}=\min\{\beta_{i,s}\,\,: y_{i,s}\in W\}$. Fix $y_{i,s}, y_{j,t}\in V(G_{\bf a}|_{W}-N[y_{q,s_{0}}])$. 
Then, by assumption, we have $\beta_{q,s_{0}}+\beta_{i,s}>k-1$. Also, by the choice of $y_{q,s_{0}}$, we clearly have $\beta_{j,t}\geq\beta_{q,s_{0}}$, and hence we obtain $\beta_{i,s}+\beta_{j,t}\geq\beta_{i,s}+\beta_{q,s_{0}}>k-1$, which implies that $\{y_{i,s},y_{j,t}\}\notin E(G_{\bf a}|_{W}-N[y_{q,s_{0}}])$. 
Next, let $x_{j}\in V(G_{\bf a}|_{W}-N[y_{q,s_{0}}])$ with $j\neq q$. Since $x_{j}$ is not adjacent to $y_{q,s_{0}}$, we have $\alpha_{j}+\beta_{q,s_{0}}>k-1$. Hence, for all $y_{i,s}\in V(G_{\bf a}|_{W}-N[y_{q,s_{0}}])$, we obtain that $\alpha_{j}+\beta_{i,s}\geq\alpha_{j}+\beta_{q,s_{0}}>k-1$. If $i\neq j$, then $\{x_{j}, y_{i,s}\}\notin E(G_{\bf a}|_{W}-N[y_{q,s_{0}}])$. Also, if $i=j$, then $\{x_{j}, y_{j,s}\}\notin E(G_{\bf a})$, which implies that every vertex $x_{j}$ with $j\neq q$ is isolated in $G_{\bf a}|_{W}-N[y_{q,s_{0}}]$. Hence, if $x_{q}\in V(G_{\bf a}|_{W}-N[y_{q,s_{0}}])$, then we have 
$$E(G_{\bf a}|_{W}-N[y_{q,s_{0}}])=\{\{x_{q},y_{i,s}\}\,\,: y_{i,s}\in V(G_{\bf a}|_{W}-N[y_{q,s_{0}}]), i\neq q, \alpha_{q}+\beta_{i,s}\leq k-1\}.$$Otherwise, the edge set is empty. Therefore, $G_{\bf a}|_{W}- N[y_{q,s_{0}}]$ is the disjoint union of at most one star and some isolated vertices, which implies that $\widetilde{H}_{p}(\Delta(G_{\bf a}|_{W}- N[y_{q,s_{0}}]);K)=0$ for all $p\geq1$.
Now, we have $\Delta(G_{\bf a})_{W}=\Delta(G_{\bf a}|_{W}- y_{q,s_{0}})\cup(\{y_{q,s_{0}}\}*\Delta(G_{\bf a}|_{W}- N[y_{q,s_{0}}]))$ and $\Delta(G_{\bf a}|_{W}- y_{q,s_{0}})\cap(\{y_{q,s_{0}}\}*\Delta(G_{\bf a}|_{W}-N[y_{q,s_{0}}]))=\Delta(G_{\bf a}|_{W}- N[y_{q,s_{0}}])$. By the Mayer--Vietoris exact sequence and the fact that $\widetilde{H}_{r}(\Delta(G_{\bf a}|_{W}- N[y_{q,s_{0}}]);K)=0$ for $r=p-1,p$, we obtain the exact sequence
$$\widetilde{H}_{p}(\Delta(G_{\bf a}|_{W}-y_{q,s_{0}});K)\rightarrow\widetilde{H}_{p}(\Delta(G_{\bf a})_{W};K)\rightarrow\widetilde{H}_{p-1}(\Delta(G_{\bf a}|_{W}-N[y_{q,s_{0}}]);K)=0.$$
By induction hypothesis, we have $\widetilde{H}_{p}(\Delta(G_{\bf a}|_{W}- y_{q,s_{0}});K)=0$. Applying this to the above exact sequence proves the claim and hence (\ref{H_p=0}) holds. Next, we prove that 
\begin{equation}\label{H_p(G_0)}
\text{$\widetilde{H}_{p}(\Delta_{\bf a}(G_{0})_{W};K)=0$,\, \, \,  for all $W\subset U$, and $p\geq h-\depth S/I_{c}(G_{0})^{(k)}-1$.}   
\end{equation}
Define a vector ${\bf b}=(b_{1},\ldots, b_{h})\in\mathbb{Z}^{h}$ by $b_{i}=\alpha_{i}$ if $x_{i}\in W$, and $b_{i}=-1$ otherwise for each $i$. Notice that $\Delta(\mathcal{H}_{\bf a}^{(k)}(G_0))_W=\Delta(\mathcal{H}_{\bf b}^{(k)}(G_0))$. 
Fix $p\geq h-\depth S/I_{c}(G_{0})^{(k)}-1$. By Lemma \ref{hyperlemma}, we see that $$0=H_{\mathfrak{m}_{S}}^{h-p-2}(S/I_{c}(G_{0})^{(k)})_{\bf b}\cong\widetilde{H}_{p}(\Delta(\mathcal{H}_{\bf b}^{(k)}(G_0));K)=\widetilde{H}_{p}(\Delta(\mathcal{H}_{\bf a}^{(k)}(G_0))_W;K).$$
Moreover, if $p\geq h-1$, we clearly have $\widetilde{H}_{p}(\Delta_{\bf a}(G_{0})_{W};K)=0$. This completes the proof of (\ref{H_p(G_0)}). 
 
By (\ref{H_p=0}) and (\ref{H_p(G_0)}) we obtain $L_{K}(\Delta(G_{\bf a}))\leq 2$ and  $$L_{K}(\Delta(\mathcal{H}_{\bf a}^{(k)}(G_{0})*2^{V}))\leq h-\depth S/I_{c}(G_{0})^{(k)}-1,$$ respectively. 
Now, applying \cite[Theorem 1.2]{km2006}, we see that 
$$L_{K}(\Delta(\mathcal{H}_{\bf a}^{(k)}(G)))\leq L_{K}(\Delta(\mathcal{H}_{\bf a}^{(k)}(G_{0})*2^{V}))+L_{K}(\Delta(G_{\bf a}))\leq h-\depth S/I_{c}(G_{0})^{(k)}+1.$$
This implies that $\widetilde{H}_{p}(\Delta(\mathcal{H}_{\bf a}(G));K)=0\mbox{ for all }p\geq h-\depth S/I_{c}(G_{0})^{(k)}+1.$ Hence, using Lemma \ref{hyperlemma}, we conclude that 
$$H_{\mathfrak{m}_{T}}^{i}(T/I_{c}(G)^{(k)})\cong\widetilde{H}_{h+r-i-2}(\Delta(\mathcal{H}_{\bf a}^{(k)}(G));K)=0\mbox{ for all }i<r+\depth S/I_{c}(G_{0})^{(k)}-2.$$ This gives $r+\depth S/I_{c}(G_{0})^{(k)}-2\leq\depth T/I_{c}(G)^{(k)}$. 

Next, we prove that $\depth T/I_{c}(G)^{(k)}\leq r+d$, here we set $d=\depth S/I_{c}(G_{0})^{(k)}$. Notice that $H_{\mathfrak{m}_{S}}^{d}(S/I_{c}(G_{0})^{(k)})\neq0.$ Hence, there exists ${\bf b}=(b_{1},\ldots, b_{h})\in\mathbb{Z}^{h}$ such that $H_{\mathfrak{m}_{S}}^{d}(S/I_{c}(G_{0})^{(k)})_{\bf b}\neq0$. By Lemma \ref{hyperlemma}, we obtain $\widetilde{H}_{h-d-2}(\Delta(\mathcal{H}_{\bf b}^{(k)}(G_{0}));K)\neq0$. Define a vector $${\bf a}=(\alpha_{1},\ldots, \alpha_{h},\beta_{1,1},\ldots, \beta_{h,n_{h}})\in\mathbb{Z}^{h+r}$$ such that $\alpha_{i}=b_{i}$ for each $i\in[h]$ and $\beta_{i,j}=-1$ for each $i\in[h]$ and $j\in[n_{i}]$. In this case, we have $V=\emptyset$, and hence, we see that $\Delta(\mathcal{H}_{\bf a}^{(k)}(G))=\Delta(\mathcal{H}_{\bf b}^{(k)}(G_{0}))$. Again by Lemma \ref{hyperlemma}, we obtain $$H_{\mathfrak{m}_{T}}^{r+d}(T/I_{c}(G)^{(k)})_{\bf a}=H_{\mathfrak{m}_{T}}^{h+r-(h-d-2)-2}(T/I_{c}(G)^{(k)})_{\bf a}\neq0,$$which implies that $\depth T/I_{c}(G)^{(k)}\leq r+d$, as required. 
\end{proof}


\section{Classification of the $S_{2}\mbox{-}\depth$ of $I_{c}(G)^{(k)}$}\label{sec5}

In this section, we first give a characterization of complementary edge ideals satisfying Serre's condition $(S_r)$ for each $r\geq2$. Then we completely determine the Serre depth for $(S_{2})$ of symbolic powers of complementary edge ideals. To this end, we first give a combinatorial description of the Serre depth for $(S_{2})$ of symbolic powers of any squarefree monomial ideal, as a generalization of \cite[Proposition 4.9]{mt2025}. 

The following result answers \cite[Question 2.9]{fm2025}, which asks for a characterization of complementary edge ideals satisfying Serre's condition $(S_r)$ for each $r\geq2$. Recently, Hien and Kumar gave a criterion for $(S_{2})$ in~\cite[Theorem 4.2]{hk2026}. 

\begin{prop}\label{S_r-I_c(G)}
Let $G$ be a graph on the vertex set $[n]$, and let $r\geq 2$ be an integer. Then $S/I_c(G)$ satisfies Serre's condition $(S_r)$ if and only if one of the following holds:
\begin{enumerate}
\item $G$ is a complete graph.
\item $G\cong K_{2}\sqcup(n-2)K_{1}$.
\item $G$ is triangle-free, has no induced cycle of length $4,5,\ldots,r+2$, and has no isolated vertices.
\end{enumerate}
\end{prop}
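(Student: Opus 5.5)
We work with the Stanley--Reisner complex $\Delta=\Delta(I_c(G))$ and use the standard criterion that $K[\Delta]$ satisfies $(S_r)$ if and only if $\Delta$ is pure and, for every face $F$ (including $\emptyset$), $\widetilde{H}_i(\link_\Delta F;K)=0$ for all $i<\min\{r-1,\dim\link_\Delta F\}$.

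First we dispose of isolated vertices. If $A\neq\emptyset$ is the set of isolated vertices and $H=G-A$, then $I_c(G)={\bf x}_A I_c(H)$. If $H\not\cong K_2$, this ideal has a height-one component and components of height $\geq2$, so it is not unmixed and $(S_2)$ fails. If $H\cong K_2$, the ideal is principal, hence Cohen--Macaulay. This yields case (2).

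From now on $G$ has no isolated vertices. By Theorem~\ref{min prime}, the facets of $\Delta$ are $[n]\setminus e$ for $e\in E(G^c)$ and $[n]\setminus T$ for $T\in\mathcal{T}(G)$. Purity, which is necessary for $(S_2)$, forces one of two situations. Either there are no nonedges, so $G=K_n$ and $\Delta$ is a matroid complex (Cohen--Macaulay, giving case (1)). Or there are no triangles, in which case $I_c(G)=J(G^c)$ is the cover ideal of $G^c$, so $\Delta=\Delta(J(G^c))$ is the Alexander dual of the independence complex of $G^c$.

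In the triangle-free case we translate $(S_r)$ to the Alexander dual. By the Yanagawa/Terai-type duality, $S/J(G^c)$ satisfies $(S_r)$ if and only if $I(G^c)$ has a linear resolution in the first $r-1$ steps, i.e.\ $\beta_{i,j}(I(G^c))=0$ for $j>i+2$ and $i\leq r-2$. By Hochster's formula this is equivalent to $\widetilde{H}_{i}(\operatorname{Cl}(G)_W;K)=0$ for all $W$ and $1\leq i\leq r-2$, where the clique complex $\operatorname{Cl}(G)=\Delta(I(G^c))$ satisfies $\operatorname{Cl}(G)_W=\operatorname{Cl}(G_W)$.

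Since $G$ is triangle-free, $\operatorname{Cl}(G)$ is just the graph $G$ viewed as a $1$-dimensional complex. Its only possible higher homology is $\widetilde{H}_1$, so the condition reduces to $\widetilde{H}_1(G_W)=0$ for all $W$ whenever $r\geq3$. A nonzero such homology class comes from an induced cycle of length $\geq4$.

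The main obstacle is pinning down the indexing so that exactly the cycles of length $4,\dots,r+2$ are excluded, rather than all cycles. The $(S_r)$ condition only involves Betti numbers $\beta_{i,j}$ with $i\leq r-2$. An induced cycle $C_m$ with $m\geq4$ contributes, via Hochster's formula with $W=V(C_m)$, to $\beta_{m-2,m}(I(G^c))$, a nonlinear entry in homological degree $m-2$. The condition $m-2\leq r$ must be checked carefully against the exact form of the duality, since the step index shifts between $I(G^c)$ and $S/I(G^c)$. Conversely, any nonzero $\widetilde{H}_1(G_W)$ with $|W|$ minimal yields an induced cycle of length exactly $|W|$. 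Here we use that $\widetilde{H}_1(G_W)\neq0$ with $W$ minimal forces $G_W$ to be a cycle, because a graph all of whose proper induced subgraphs are forests and which contains a cycle is itself an induced cycle.

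For $r=2$ the range $4,\dots,r+2$ consists only of $4$, so the condition becomes: no induced $C_4$. This is consistent with \cite[Theorem 4.2]{hk2026} and serves as a sanity check on the indexing. Finally, independence of the characteristic follows because cycle homology is characteristic-free.
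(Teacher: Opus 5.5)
Your preliminary reductions are sound, and slightly more direct than the paper's. The paper rules out isolated vertices by exhibiting a face $F$ with $\link_{\Delta(I)}F=\langle C,\{w\}\rangle$ disconnected, and rules out triangles by noting that they give degree-$3$ generators of $I_{\Delta^\vee}$. You get both from purity. The duality step is the same Yanagawa-type reduction to property $(N_{2,r})$ of $I(G^c)$ that the paper uses.

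\textbf{The gap.} It lies in the last step, which is exactly where the range $4,\dots,r+2$ is decided. The paper simply quotes the Eisenbud--Green--Hulek--Popescu theorem there. Your bookkeeping goes wrong in three places.

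\begin{enumerate}
\item \emph{The duality range.} For pure $\Delta$ with $I_{\Delta^\vee}$ generated in degree $c$, combine $\beta_{i,\sigma}(I_{\Delta^\vee})=\dim_K\widetilde H_{i-1}(\link_\Delta([n]\setminus\sigma);K)$ with the link criterion you quote. This shows that $(S_r)$ is equivalent to $\beta_{i,j}(I_{\Delta^\vee})=0$ for all $j\neq i+c$ and $0\le i\le r-1$, not $i\le r-2$. With your range, $r=2$ imposes no condition at all. Yet $S/I_c(C_4)$, whose Stanley--Reisner complex is two disjoint edges, is not $(S_2)$.
\item \emph{The Hochster translation.} It conflates the Betti index with the homological degree. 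In fact $\beta_{i,j}(I(G^c))=\sum_{|W|=j}\dim_K\widetilde H_{j-i-2}(\operatorname{Cl}(G)_W;K)$, so the restriction falls on $|W|$, not on the degree of homology. Your claim that for $r\ge 3$ one needs $\widetilde H_1(G_W)=0$ for every $W$ forbids all induced cycles. It would say that $S/I_c(C_6)$ fails $(S_3)$, contradicting the proposition.
\item \emph{The cycle contribution.} An induced $C_m$ contributes to $\beta_{m-3,m}(I(G^c))$, equivalently $\beta_{m-2,m}(S/I(G^c))$, not to $\beta_{m-2,m}(I(G^c))$.
\end{enumerate}

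The inequality $m-2\le r$ you write is the right one. However, it is asserted rather than derived, and it does not follow from the formulas you actually state.

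\textbf{The repair.} Since $G$ is triangle-free, $\operatorname{Cl}(G)$ is one-dimensional. So the only possible nonlinear Betti numbers of $I(G^c)$ are $\beta_{j-3,j}(I(G^c))=\sum_{|W|=j}\dim_K\widetilde H_1(G_W;K)$. With the correct range $i\le r-1$, $(S_r)$ fails if and only if $\widetilde H_1(G_W;K)\neq 0$ for some $W$ with $|W|\le r+2$.

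In that case a shortest cycle of $G_W$ is chordless, hence an induced cycle of $G$. Its length is at least $4$ because $G$ is triangle-free, and at most $|W|\le r+2$. Conversely, an induced $C_m$ with $4\le m\le r+2$ gives $\beta_{m-3,m}(I(G^c))\neq 0$ with $m-3\le r-1$.

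With this correction, your argument becomes a self-contained proof of the step for which the paper cites Eisenbud--Green--Hulek--Popescu.
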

\begin{proof} 
Set $I=I_{c}(G)$. If $G$ is a complete graph, then by \cite[Theorem 2.2]{hqm2026} or \cite[Theorem 2.8]{fm2025}, $I$ is Cohen-Macaulay. Moreover, if $G\cong K_{2}\sqcup(n-2)K_{1}$, since $I$ is principal, we know that $S/I$ is Cohen--Macaulay. 
Now, suppose that $G$ has isolated vertices. Let $A$ be the set of isolated vertices of $G$ and set $H=G-A$. Since $E(G)\ne\emptyset$ and $G\not\cong K_{2}\sqcup(n-2)K_{1}$, we have $|V(H)|\geq3$. Then $E(H^{c})\cup\mathcal{T}(H)\neq\emptyset$. Fix $C\in E(H^{c})\cup\mathcal{T}(H)$ and $w\in A$. By Theorem \ref{primay decomp}, we know that $P_{C}$ is a minimal prime ideal of $I_{c}(H)$. Set $F=[n]\setminus(C\cup\{w\})$. We claim that $\link_{\Delta(I)}F=\langle C,\{w\}\rangle$. Since $I={\bf x}_{A}I_{c}(H)$, the facets of $\Delta(I)$ are of the form $[n]\setminus\{u\}$ and $[n]\setminus D$, where $u\in A$ and $D\in E(H^{c})\cup\mathcal{T}(H)$. A facet $[n]\setminus\{u\}$ with $u\in A$ contains $F$ only when $u=w$. Next, suppose that $[n]\setminus D$ for some $D\in E(H^{c})\cup\mathcal{T}(H)$ contains $F$. Then, we have $D\subset C$. If $|C|=2$, then we have $D=C$. Suppose $|C|=3$. Then $C$ is a triangle, and hence, no two-element subset of $C$ belongs to $E(H^{c})$. Since $D\in E(H^{c})\cup\mathcal{T}(H)$, $D\subset C$ and $2\leq|D|\leq3$, it follows that $|D|=3$, and thus, $D=C$. Thus, the only facets of $\Delta(I)$ containing $F$ are $F\cup C$ and $F\cup\{w\}$, and hence, we obtain $\link_{\Delta(I)}F=\langle C,\{w\}\rangle$. In particular, $\link_{\Delta(I)}F$ is disconnected and has dimension at least 1. Therefore, $S/I$ does not satisfy Serre's condition $(S_{2})$, a contradiction. Thus, we may assume that $G$ has no isolated vertices. Suppose that $G$ is not complete. Then we see that $\indeg I_{\Delta^{\vee}}=2$, where $\Delta=\Delta(I_c(G))$. Therefore, $S/I_{c}(G)$ satisfies Serre's condition $(S_{r})$ if and only if $I_{\Delta^{\vee}}$ satisfies condition $(N_{2,r})$ by \cite[Corollary 3.7]{y2000}. The minimal generators of $I_{\Delta^{\vee}}$ of degree 3 correspond precisely to the triangles of $G$, which implies that $I_{\Delta^{\vee}}$ satisfies condition $(N_{2,r})$ only if $G$ is triangle-free. In that case, by \cite[Corollary 1.3]{hqm2026} or \cite[Corollary 2.2]{fm2025}, we have $I_{\Delta^{\vee}}=I(G^{c})$. By \cite[Theorem 2.1]{eghp2005}, $I(G^{c})$ satisfies condition $(N_{2,r})$ if and only if $G$ has no induced cycle of length $4,5,\ldots, r+2$, as required. 
\end{proof}

To investigate the Serre depth for $(S_{2})$ of squarefree monomial ideals that are not necessarily unmixed, we provide the following formula as a generalization of \cite[Proposition 4.9]{mt2025}. 

\begin{prop}\label{S_2General}
Let $\Delta$ be a simplicial complex and $k\geq1$. Then we have 
$$S_{2}\mbox{-}\depth S/I_{\Delta}^{(k)}=\min\{\delta(\Delta),\Lambda_{k}(\Delta)+1\},$$
where $\delta(\Delta)=\min\{|F|\,:\, F\in\mathcal{F}(\Delta)\}$ and $$\Lambda_{k}(\Delta)=\min \Bigl\{\, |F| :\,
   \substack{
     \displaystyle F\in\Delta,\text{ and } \Delta_{\bf a}(I_{{\rm link}_{\Delta}F}^{(k)}) \text{ is disconnected for some }{\bf a}\in\mathbb{N}^{[n]\setminus F}
   } \,\Bigr\}$$ and if there is no such a face $F$, then we set $\Lambda_{k}(\Delta)=\infty$. 
\end{prop}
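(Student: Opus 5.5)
We work directly from the definition $S_2\text{-}\depth M=\min\{j:\dim K^j_M\geq j-1\}$ with $M=S/I_\Delta^{(k)}$. Everything is read off from the Hilbert series formula in Corollary~\ref{dual}. It shows that $\dim K^j_{S/I_\Delta^{(k)}}$ is the largest $|F|$, over faces $F\in\Delta$, for which some ${\bf a}\in\mathbb N^{[n]\setminus F}$ gives
\[
\widetilde H_{j-|F|-1}\bigl(\Delta_{\bf a}(I^{(k)}_{\link_\Delta F});K\bigr)\neq0 .
\]
Here we use that each summand contributes a pole of order exactly $|F|$ at $t=1$, and that nonnegative coefficients prevent cancellation. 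So the condition $\dim K^j\geq j-1$ says there is a face $F$ with $|F|\geq j-1$ and a nonvanishing reduced homology group of $\Delta_{\bf a}(I^{(k)}_{\link F})$ in degree $j-|F|-1\leq 0$. Only two cases can occur: $|F|=j$, which needs $\widetilde H_{-1}\neq0$, and $|F|=j-1$, which needs $\widetilde H_0\neq0$.

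\textbf{Case $|F|=j$ (degree $-1$).} Here $\widetilde H_{-1}(\Delta_{\bf a}(I^{(k)}_{\link F}))\neq0$ means the complex is $\{\emptyset\}$. By Lemma~\ref{deg cpx} applied to $\Gamma=\link_\Delta F$, $\emptyset$ is a face exactly when some facet $G$ of $\Gamma$ has $\sum_{i\notin G}a_i\leq k-1$. The only facet of $\Delta_{\bf a}$ would then be $\emptyset$, which requires $\emptyset$ to be a facet of $\Gamma$, i.e. $F\in\mathcal F(\Delta)$. Conversely, for a facet $F$ we have $\link F=\{\emptyset\}$, and taking ${\bf a}=0$ gives $\Delta_{\bf a}=\{\emptyset\}$. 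So the least $j$ achieved in this case is exactly $\delta(\Delta)$.

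\textbf{Case $|F|=j-1$ (degree $0$).} Here $\widetilde H_0\neq0$ means $\Delta_{\bf a}(I^{(k)}_{\link F})$ is disconnected. So the least $j$ achieved in this case is $\Lambda_k(\Delta)+1$. Taking the minimum over both cases gives the formula. If neither case occurs, then $K^j$ never satisfies the condition for $j\leq n$. We will check that $\delta(\Delta)$ is always finite, so the minimum is always attained by the first case at worst.

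\textbf{Main obstacle.} The delicate point is justifying the dimension formula for $K^j$ from Corollary~\ref{dual}, in particular the claim that no cancellation occurs among summands with the same pole order. I expect to handle this by noting that the Hilbert series is a sum of nonnegative series, each of the form $(\text{Laurent polynomial in }t^{-1}\text{ with nonnegative coefficients})\cdot(t/(1-t))^{|F|}$. The dimension of a finitely generated graded module is the pole order at $t=1$, and nonnegativity of the coefficients guarantees the pole orders add up without cancelling. We will also need the finiteness of the ${\bf a}$-range for fixed $F$. This follows because $\Delta_{\bf a}$ stabilizes once all $a_i\geq k$, since entries beyond $k$ change nothing in Lemma~\ref{deg cpx}. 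Once this is settled, the argument above is bookkeeping.
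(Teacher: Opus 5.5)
Your proposal is correct and follows essentially the same route as the paper. Both read off $\dim K^j_{S/I_\Delta^{(k)}}$ from Corollary~\ref{dual} as the largest $|F|$ for which some $\widetilde H_{j-|F|-1}(\Delta_{\bf a}(I^{(k)}_{\link_\Delta F});K)\neq0$, observe that $\dim K^j\geq j-1$ forces homological degree $-1$ (so $F$ is a facet) or $0$ (disconnectedness), and then take the minimum.

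The paper simply asserts this dimension formula, and your pole-order/nonnegativity justification of it is fine, with one exception. Finiteness of the ${\bf a}$-range does not follow from ``stabilization''. It follows because if some $a_i\geq k$, then by Lemma~\ref{deg cpx} every facet of $\Delta_{\bf a}(I^{(k)}_{\link_\Delta F})$ contains $i$, so the complex is a cone or void and has vanishing reduced homology; only ${\bf a}$ with all $a_i\leq k-1$ contribute.
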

\begin{proof}
From Corollary \ref{dual}, we see that 
\begin{equation}\label{dimKi}
\dim K_{S/I_{\Delta}^{(k)}}^{i}=\max\{|F|\,\,: F\in\Delta,\; {\bf a}\in\mathbb{N}^{[n]\setminus F}\mbox{ with }\widetilde{H}_{i-|F|-1}(\Delta_{\bf a}(I_{\link_{\Delta}F}^{(k)});K)\neq0\}.   
\end{equation}
Take a facet $F\in\mathcal{F}(\Delta)$ with $|F|=\delta(\Delta)$. Then, since $\link_{\Delta}F=\{\emptyset\}$, we have $$\widetilde{H}_{-1}(\Delta_{\bf 0}(I_{\link_{\Delta}F}^{(k)});K)\neq0,$$ which implies that $\dim K_{S/I_{\Delta}^{(k)}}^{\delta(\Delta)}\geq|F|>\delta(\Delta)-1$, and thus we get $S_{2}\mbox{-}\depth S/I_{\Delta}^{(k)}\leq \delta(\Delta)$. Suppose that $\Lambda_{k}(\Delta)<\infty$. Then there exist a face $F$ of $\Delta$ such that $|F|=\Lambda_{k}(\Delta)$. In this case, by (\ref{dimKi}), we see that $\dim K_{S/I_{\Delta}^{(k)}}^{|F|+1}\geq |F|=(|F|+1)-1$, which implies that $$S_{2}\mbox{-}\depth S/I_{\Delta}^{(k)}\leq |F|+1=\Lambda_{k}(\Delta)+1.$$ 
We now prove the converse inequality. By definition, $\dim K_{S/I_{\Delta}^{(k)}}^{S_{2}\mbox{-}\depth S/I_{\Delta}^{(k)}}\geq S_{2}\mbox{-}\depth S/I_{\Delta}^{(k)}-1$. Hence, by (\ref{dimKi}), there exist $F\in\Delta$ and ${\bf a}\in\mathbb{N}^{[n]\setminus F}$ such that $|F|\geq S_{2}\mbox{-}\depth S/I_{\Delta}^{(k)}-1$ and $$\widetilde{H}_{S_{2}\mbox{-}\depth S/I_{\Delta}^{(k)}-|F|-1}(\Delta_{\bf a}(I_{\link_{\Delta}F}^{(k)});K)\neq0.$$Thus $S_{2}\mbox{-}\depth S/I_{\Delta}^{(k)}-|F|-1\in\{-1,0\}$. If $S_{2}\mbox{-}\depth S/I_{\Delta}^{(k)}-|F|-1=-1$, then $\link_{\Delta}F=\{\emptyset\}$ and hence $F\in\mathcal{F}(\Delta)$, which implies that $\delta(\Delta)\leq |F|=S_{2}\mbox{-}\depth S/I_{\Delta}^{(k)}$. Also, if $S_{2}\mbox{-}\depth S/I_{\Delta}^{(k)}-|F|-1=0$, then we obtain $\Lambda_{k}(\Delta)\leq |F|=S_{2}\mbox{-}\depth S/I_{\Delta}^{(k)}-1$, as required. 
\end{proof}

In what follows, we establish several lemmas that will be used to determine the Serre depth for $(S_{2})$ of symbolic powers of complementary edge ideals. 

\begin{lemma}\label{S_2depth lemma}
Let $G$ be a graph on $[n]$ without isolated vertices, let $F\in\Delta(I_{c}(G))$ and let $k\geq1$. If there exists ${\bf a}\in\mathbb{N}^{V(\link_{\Delta(I_{c}(G))}F)}$ such that $\Delta_{\bf a}(I_{\link_{\Delta(I_{c}(G))}F}^{(k)})$ is disconnected,  then we have $n-|F|\leq5$. Moreover, if $G|_{[n]\setminus F}$ is triangle-free, then we have $n-|F|\leq4$. \end{lemma}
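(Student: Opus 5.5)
Set $\Delta=\Delta(I_c(G))$, $\Gamma=\link_{\Delta}F$ and $V=[n]\setminus F$. By Theorem~\ref{min prime}, the facets of $\Delta$ are the sets $[n]\setminus C$ with $C\in E(G^c)\cup\mathcal{T}(G)$. Hence the facets of $\Gamma$ are exactly the sets $V\setminus C$ with $C\subseteq V$ and $C\in E(G^c)\cup\mathcal{T}(G)$. We call these sets $C$ the \emph{complementary sets}; each has size $2$ or $3$. Since $F\in\Delta$, at least one complementary set exists.

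Next we describe the degree complex. By Lemma~\ref{deg cpx}, $\Delta_{\bf a}(I_{\Gamma}^{(k)})$ is generated by the facets $V\setminus C$ for which $C$ is complementary and $\sum_{i\in C}a_i\le k-1$. Suppose this complex is disconnected. Then it has at least two facets. In particular it is nonempty, so there is at least one surviving complementary set.

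The key observation is about two surviving facets $V\setminus C$ and $V\setminus C'$. If they lie in different connected components, they must be disjoint, which forces $V\subseteq C\cup C'$. Therefore
\[
n-|F|=|V|\le |C|+|C'|\le 6.
\]
The plan is to sharpen this to $5$, and to $4$ in the triangle-free case. Note that if $G|_V$ is triangle-free, every complementary set has size $2$, so this argument already gives $|V|\le 4$. That settles the second assertion.

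It remains to exclude $|V|=6$ in general. In that case $C$ and $C'$ are disjoint triangles of $G$ with $C\sqcup C'=V$. Every other surviving facet $V\setminus C''$ must avoid meeting both of them, which is impossible unless $C''=C$ or $C''=C'$. So every connected component consists of facets of the form $V\setminus C$ that are pairwise disjoint from facets in the other components. The approach is to find an extra surviving complementary set bridging the two components, giving a contradiction.

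Since all six vertices of $V$ are involved and $C$, $C'$ are triangles, I would split into two cases according to whether some pair $\{u,v\}$ with $u\in C$, $v\in C'$ is a non-edge of $G$.

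\textbf{First case.} Suppose such a non-edge $\{u,v\}$ exists. Then $\{u,v\}$ is complementary. The budget inequality $\sum_{C}a_i+\sum_{C'}a_i\le 2(k-1)$ should let us choose $u\in C$ and $v\in C'$ with $a_u+a_v\le k-1$. Then $V\setminus\{u,v\}$ is a surviving facet, and it meets both $V\setminus C$ and $V\setminus C'$. This connects the two components, a contradiction.

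The choice of $u$ and $v$ is the main obstacle. The minimizing vertices in $C$ and in $C'$ might be adjacent in $G$, so that $\{u,v\}$ is not complementary. In that situation I would instead use a triangle $\{u,v,w\}$ as the bridging set.

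\textbf{Second case.} Suppose instead that every pair across $C$ and $C'$ is an edge of $G$. Then $G|_V=K_6$, and all triangles in $V$ are complementary. Take the minimal-weight vertex in $C$ and in $C'$, say with values $a\le(k-1)/3$ and $b\le(k-1)/3$ on average. Choosing two vertices of $C$ with smallest weights and one of $C'$ with smallest weight gives a triangle of weight at most $2(k-1)/3+(k-1)/3=k-1$. This triangle is surviving and meets both components, giving the contradiction. The pigeonhole comparison of averages should carry through in the first case as well, using either a mixed non-edge or a mixed triangle.
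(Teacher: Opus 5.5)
Your reduction matches the paper's. Two facets $V\setminus C$, $V\setminus C'$ in different components are disjoint, so $V=C\cup C'$. This gives $|V|\le 6$, and $|V|\le 4$ when $G|_V$ is triangle-free. Your observation that only $C$ and $C'$ can survive when $|V|=6$ is also correct, as is your $K_6$ case.

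The gap is in your first case. Splitting on whether \emph{some} cross pair $\{u,v\}$ ($u\in C$, $v\in C'$) is a non-edge is the wrong dichotomy. An arbitrary cross non-edge may have $a_u+a_v\ge k$, and then it does not survive. Your fallback, ``use a triangle $\{u,v,w\}$'', is also unjustified, because the triple you would take need not be a triangle of $G$. As written, this case is a plan whose main obstacle you name but do not resolve.

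The missing step, which is how the paper argues, is to apply the dichotomy to one specific light triple. Order $C=\{u_1,u_2,u_3\}$ and $C'=\{v_1,v_2,v_3\}$ by nondecreasing weight. From $\sum_{i\in C}a_i\le k-1$ and $\sum_{i\in C'}a_i\le k-1$ you get $a_{u_1}+a_{u_2}\le \frac{2(k-1)}{3}$ and $a_{v_1}\le \frac{k-1}{3}$. Hence $a_{u_1}+a_{u_2}+a_{v_1}\le k-1$, and in particular $a_{u_i}+a_{v_1}\le k-1$ for $i=1,2$. There are then two possibilities:
\begin{itemize}
\item If $\{u_1,v_1\}$ or $\{u_2,v_1\}$ is a non-edge of $G$, it is a surviving complementary $2$-set.
\item Otherwise both are edges. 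Since $\{u_1,u_2\}\in E(G)$ ($C$ is a triangle), $\{u_1,u_2,v_1\}$ is a triangle of $G$ of weight at most $k-1$, so it survives.
\end{itemize}
In either case the new facet contains $u_3$ and $v_2$. It therefore meets both $V\setminus C'=C$ and $V\setminus C=C'$, joining the two components, a contradiction. This one argument handles both of your cases; your $K_6$ case is just the instance where the second alternative always occurs.
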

\begin{proof}
Set $I=I_{c}(G)$, $\Delta=\Delta(I)$, and $U=[n]\setminus F$. If $|U|\leq 3$, both assertions hold. Hence, we may assume that $|U|\geq4$. For every $u\in U$, we have $|F\cup\{u\}|=n-|U|+1\leq n-3$. Since $I$ is generated in degree $n-2$, it follows that $F\cup\{u\}\in\Delta$, and hence, we see that $V(\link_{\Delta}F)=U$. By assumption, there exists ${\bf a}\in\mathbb{N}^{|U|}$ such that $\Delta_{\bf a}(I_{\link_{\Delta}F}^{(k)})$ is disconnected. From \cite[Theorem 2.1]{fm2025} or \cite[Theorem 1.1]{hqm2026}, we see that $\mathcal{F}(\link_{\Delta}F)=\{U\setminus C\,\,: C\in E((G|_{U})^{c})\cup\mathcal{T}(G|_{U})\}$. Set $\mathcal{H}_{\bf a}=\mathcal{H}_{\bf a}^{(k)}(G|_{U})$. Choose facets $U\setminus C_{1}$ and $U\setminus C_{2}$ of $\Delta_{\bf a}(I_{\link_{\Delta}F}^{(k)})$ belonging to different connected components, where $C_{1},C_{2}\in E(\mathcal{H}_{\bf a})$. Since $U\setminus C_{1}$ and $U\setminus C_{2}$ belong to different connected components, we have $(U\setminus C_{1})\cap(U\setminus C_{2})=\emptyset$. Hence, $U=C_{1}\cup C_{2}$, and thus, $n-|F|=|U|\leq|C_{1}|+|C_{2}|\leq6$. If $G|_{U}$ is triangle-free, then we see that $n-|F|=|U|\leq|C_{1}|+|C_{2}|\leq4$. Suppose that $|U|=6$, that is, $|C_{1}|=|C_{2}|=3$. Write $C_{1}=\{u_{1},u_{2},u_{3}\}$ and $C_{2}=\{v_{1},v_{2},v_{3}\}$. We may assume that $a_{u_{1}}\leq a_{u_{2}}\leq a_{u_{3}}$ and $a_{v_{1}}\leq a_{v_{2}}\leq a_{v_{3}}$. By the definition of $E(\mathcal{H}_{\bf a})$, we have $a_{u_{1}}+a_{u_{2}}+a_{u_{3}}\leq k-1$ and $a_{v_{1}}+a_{v_{2}}+a_{v_{3}}\leq k-1$. Since $a_{u_{1}}+a_{u_{2}}\leq 2a_{u_{3}}$, we have 
$$a_{u_{1}}+a_{u_{2}}\leq\frac{2}{3}(a_{u_{1}}+a_{u_{2}}+a_{u_{3}})\leq\frac{2(k-1)}{3}.$$
Similarly, we see that $$a_{v_{1}}\leq\frac{1}{3}(a_{v_{1}}+a_{v_{2}}+a_{v_{3}})\leq\frac{k-1}{3}.$$
Consequently, we obtain that $a_{u_{1}}+a_{u_{2}}+a_{v_{1}}\leq k-1$. In particular, we have $a_{u_{i}}+a_{v_{1}}\leq k-1$ for $i=1,2$. Fix $i\in\{1,2\}$. Suppose that $\{u_{i},v_{1}\}\notin E(G|_{U})$. Then, $\{u_{i},v_{1}\}\in E(\mathcal{H}_{\bf a})$, and thus, $U\setminus\{u_{i},v_{1}\}\in\Delta_{\bf a}(I_{\link_{\Delta}F}^{(k)})$. Since $\{u_{3},v_{2}\}\subset U\setminus\{u_{i},v_{1}\}$, we have $\{u_{3},v_{2}\}\in\Delta_{\bf a}(I_{\link_{\Delta}F}^{(k)})$. Now, since $u_{3}\in C_{1}$ and $v_{2}\in C_{2}$, the edge $\{u_{3},v_{2}\}$ joins the two connected components containing $U\setminus C_{1}$ and $U\setminus C_{2}$, a contradiction. Hence, we obtain that $\{u_{1},v_{1}\},\{u_{2},v_{1}\}\in E(G|_{U})$. Moreover, since $C_{1}\in E(\mathcal{H}_{\bf a})$ and $|C_{1}|=3$, we know $C_{1}\in\mathcal{T}(G|_{U})$. In particular, $\{u_{1},u_{2}\}\in E(G|_{U})$. Hence, $\{u_{1},u_{2},v_{1}\}\in\mathcal{T}(G|_{U})$. Together with the inequality $a_{u_{1}}+a_{u_{2}}+a_{u_{3}}\leq k-1$ proved above, this implies that $\{u_{1},u_{2},v_{1}\}\in E(\mathcal{H}_{\bf a})$. As a consequence, we obtain that $U\setminus\{u_{1},u_{2},v_{1}\}\in\Delta_{\bf a}(I_{\link_{\Delta}F}^{(k)})$ and thus, $\{u_{3},v_{2}\}\in\Delta_{\bf a}(I_{\link_{\Delta}F}^{(k)})$, which leads us to a contradiction. Therefore, $|U|\neq6$. Since $|U|\leq6$, we conclude that $n-|F|=|U|\leq5$, as required. 
\end{proof}

From Lemma \ref{S_2depth lemma}, we obtain the following statement. 
\begin{lemma}\label{lemma for S_2-depth}
Let $G$ be a graph on the vertex set $[n]$ without isolated vertices and $k\geq1$. Then the following statements hold:
\begin{enumerate}
\item[\textup{(a)}] Suppose that $G$ is triangle-free. Then $S_{2}\mbox{-}\depth S/I_{c}(G)^{(k)}\in\{n-2, n-3\}$. In particular, $S_{2}\mbox{-}\depth S/I_{c}(G)^{(k)}=n-3$ if and only if there exist $F\in\Delta(I_{c}(G))$ and ${\bf a}\in\mathbb{N}^{V(\link_{\Delta(I_{c}(G))}F)}$ such that $|F|=n-4$ and $\Delta_{\bf a}(I_{\link_{\Delta(I_{c}(G))}F}^{(k)})$ is disconnected. 
\item[\textup{(b)}] Suppose that $G$ has triangles. Then $S_{2}\mbox{-}\depth S/I_{c}(G)^{(k)}\in\{n-3,n-4\}$. In particular, $S_{2}\mbox{-}\depth S/I_{c}(G)^{(k)}=n-4$ if and only if there exist $F\in\Delta(I_{c}(G))$ and ${\bf a}\in\mathbb{N}^{V(\link_{\Delta(I_{c}(G))}F)}$ such that $|F|=n-5$ and $\Delta_{\bf a}(I_{\link_{\Delta(I_{c}(G))}F}^{(k)})$ is disconnected. 
\end{enumerate}
\end{lemma}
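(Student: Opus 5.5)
The plan is to combine Proposition~\ref{S_2General} with Lemma~\ref{S_2depth lemma}. Put $\Delta=\Delta(I_c(G))$. Because $G$ has no isolated vertices, Theorem~\ref{primay decomp} shows that the facets of $\Delta$ are the sets $[n]\setminus C$ with $C\in E(G^c)\cup\mathcal{T}(G)$. Hence
\[
\delta(\Delta)=n-\max\{|C| : C\in E(G^c)\cup\mathcal{T}(G)\}.
\]
This gives $\delta(\Delta)=n-2$ when $G$ is triangle-free, which forces $E(G^c)\neq\emptyset$ by the standing hypothesis $n\geq3$ together with triangle-freeness. It gives $\delta(\Delta)=n-3$ when $G$ has a triangle. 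Proposition~\ref{S_2General} then says that the $S_2$-depth equals $\min\{\delta(\Delta),\Lambda_k(\Delta)+1\}$. What remains is to control $\Lambda_k(\Delta)$.

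Lemma~\ref{S_2depth lemma} bounds $\Lambda_k$ from below. If $F$ realizes $\Lambda_k(\Delta)$, then $n-|F|\le 5$, and $n-|F|\le 4$ whenever $G|_{[n]\setminus F}$ is triangle-free, which holds automatically if $G$ is triangle-free. Therefore $\Lambda_k(\Delta)+1\ge n-3$ in the triangle-free case, and $\Lambda_k(\Delta)+1\ge n-4$ in general. Combined with the value of $\delta(\Delta)$, this puts the $S_2$-depth in $\{n-3,n-2\}$ for (a) and in $\{n-4,n-3\}$ for (b).

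For the ``in particular'' statements, take case (a). The $S_2$-depth equals $n-3$ exactly when $\Lambda_k(\Delta)\le n-4$, and by the lower bound this means $\Lambda_k(\Delta)=n-4$. If that holds, any face realizing the minimum has $|F|=n-4$ and a disconnected degree complex. Conversely, such an $F$ gives $\Lambda_k\le n-4$ directly from the definition. Case (b) is the same argument with $n-5$ in place of $n-4$: the $S_2$-depth equals $n-4$ iff $\Lambda_k(\Delta)+1\le n-4$, iff $\Lambda_k=n-5$, iff some face of size $n-5$ has a disconnected $\Delta_{\bf a}$.

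One point needs care: a face $F$ with $|F|<n-5$ (respectively $<n-4$) can never satisfy the disconnectedness condition. This is precisely the content of Lemma~\ref{S_2depth lemma}, so no real obstacle remains. The main thing to check is that the faces involved are genuinely faces of $\Delta$ with $\mathbf a$ indexed by $V(\link_\Delta F)$, matching the convention $\mathbf a\in\mathbb N^{[n]\setminus F}$ in Proposition~\ref{S_2General}. The proof of Lemma~\ref{S_2depth lemma} shows that $V(\link_\Delta F)=[n]\setminus F$ once $|[n]\setminus F|\ge4$, so the two indexings agree.
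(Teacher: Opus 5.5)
Your proposal is correct and follows the paper's proof essentially verbatim: you compute $\delta(\Delta)$ from Theorem~\ref{min prime}, bound $\Lambda_k(\Delta)$ from below via Lemma~\ref{S_2depth lemma}, and read off both the range and the ``in particular'' characterizations from Proposition~\ref{S_2General}. Your two extra checks are points the paper leaves implicit: that $E(G^c)\neq\emptyset$ in the triangle-free case, and that the indexing $V(\link_{\Delta}F)=[n]\setminus F$ agrees with the convention $\mathbf a\in\mathbb N^{[n]\setminus F}$ in Proposition~\ref{S_2General}.
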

\begin{proof}
Set $\Delta=\Delta(I_{c}(G))$.

(a) Since $G$ is triangle-free, by Theorem~\ref{min prime} we have $\delta(\Delta)=n-2$. Moreover, Lemma~\ref{S_2depth lemma} implies that $\Lambda_{k}(\Delta)\geq n-4$. Hence, it follows from Proposition~\ref{S_2General}   that $S_{2}\mbox{-}\depth S/I_{c}(G)^{(k)}\in\{n-2, n-3\}$.
Moreover, we have  $S_{2}\mbox{-}\depth S/I_{c}(G)^{(k)}=n-3$ if and only if $\Lambda_{k}(\Delta)=n-4$. This together with Lemma~\ref{S_2depth lemma} yields the second assertion.
\smallskip

(b) We have $\delta(\Delta)=n-3$, and by Lemma~\ref{S_2depth lemma} we obtain $\Lambda_{k}(\Delta)\geq n-5$. Thus Proposition~\ref{S_2General} implies that $S_{2}\mbox{-}\depth S/I_{c}(G)^{(k)}\in\{n-3, n-4\}$. Moreover, we have  $S_{2}\mbox{-}\depth S/I_{c}(G)^{(k)}=n-4$ if and only if $\Lambda_{k}(\Delta)=n-5$. This together with Lemma~\ref{S_2depth lemma} shows the second assertion.
\end{proof}

\begin{lemma}\label{S_2Lemma}
Let $G$ be a graph on the vertex set $[n]$ and let $k\geq1$. Set $\Delta=\langle[n]\setminus C\,\,: C\in E(G^{c})\cup\mathcal{T}(G)\rangle$. Then the following statements hold:
\begin{enumerate}
\item[\textup{(a)}] Suppose that $G$ is triangle-free and $n=4$. Then, there exists ${\bf a}\in\mathbb{N}^{4}$ such that $\Delta_{\bf a}(I_{\Delta}^{(k)})$ is disconnected if and only if one of the following holds:
\begin{enumerate}
\item[\textup{(1)}] $G=C_{4}$ and $k\geq1$. 
\item[\textup{(2)}] $G=P_{4}$ and $k\geq2$. 
\item[\textup{(3)}] $G=P_{3}\sqcup K_{1}$ and $k\geq3$. 
\end{enumerate}
\item[\textup{(b)}] Suppose that $G$ has triangles and $n=5$. Then, there exists ${\bf a}\in\mathbb{N}^{5}$ such that $\Delta_{\bf a}(I_{\Delta}^{(k)})$ is disconnected if and only if $G=K_{5}-e$, and $k=7$ or $k\geq9$, where $e\in E(K_{5})$.  
\end{enumerate}
\end{lemma}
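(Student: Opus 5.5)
We will work entirely with the description of $\Delta_{\bf a}(I_\Delta^{(k)})$ from Lemma~\ref{deg cpx}. Its facets are the sets $[n]\setminus C$, where $C$ runs over the members of $E(G^c)\cup\mathcal{T}(G)$ with $\sum_{i\in C}a_i\le k-1$. In other words, they are the complements of the edges of the hypergraph $\mathcal{H}_{\bf a}^{(k)}(G)$ of Section~\ref{sec3}, taken with $U=[n]$.

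Two facets $[n]\setminus C_1$ and $[n]\setminus C_2$ intersect exactly when $C_1\cup C_2\neq[n]$. For $n=4$ in the triangle-free case, all such $C$ have size $2$. The complex is then disconnected precisely when the ``intersection graph'' on the admissible $C$'s is disconnected. Two admissible pairs fail to meet as faces only when they are complementary, i.e.\ $C_1\sqcup C_2=[4]$. So disconnectedness means the following: the admissible nonedges of $G$ can be split into two nonempty classes such that every nonedge in one class is complementary to every nonedge in the other. This forces the admissible set to be exactly two complementary nonedges $\{C,[4]\setminus C\}$.

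\textbf{Part (a).} The task reduces to asking when there is ${\bf a}\in\mathbb{N}^4$ and a perfect matching $\{C,C'\}$ of $G^c$ with the following properties:
\begin{itemize}
\item $\sum_C a\le k-1$ and $\sum_{C'}a\le k-1$;
\item every other nonedge $D$ of $G$ satisfies $\sum_D a\ge k$.
\end{itemize}
We will run through the triangle-free graphs on $4$ vertices whose complement contains a perfect matching: $C_4$, $P_4$, $P_3\sqcup K_1$, $2K_2$, $K_{1,3}$, $K_2\sqcup 2K_1$ and $4K_1$. For each we solve the small integer program.

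\begin{itemize}
\item $C_4$: there are no other nonedges, so ${\bf a}=0$ works for all $k\ge1$.
\item $P_4=1-2-3-4$: we need $a_1+a_3,\ a_2+a_4\le k-1$ and $a_1+a_4\ge k$. A solution exists iff $k\ge2$, e.g.\ ${\bf a}=(k-1,0,0,1)$; for $k=1$ all $a_i=0$, contradicting $a_1+a_4\ge k$.
\item $P_3\sqcup K_1$: this has two extra nonedges. Summing the two ``$\ge k$'' constraints against the two ``$\le k-1$'' ones shows $k\ge3$ is needed, and an explicit vector settles sufficiency.
\item $K_{1,3}$, $2K_2$, $K_2\sqcup2K_1$, $4K_1$: there are enough extra nonedges that adding the appropriate ``$\ge k$'' constraints gives $2k\le 2(k-1)$, a contradiction.
\end{itemize}

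\textbf{Part (b).} Here $n=5$, and the admissible $C$'s are nonedges and triangles. Two facets are disjoint iff $C_1\cup C_2=[5]$, which forces $\{|C_1|,|C_2|\}=\{2,3\}$ with $C_1,C_2$ complementary. Disconnectedness then requires the admissible family to split into two nonempty classes in which each member of one class is complementary to each member of the other. A set of size $2$ has a unique complement, so the family must consist of exactly one nonedge $e$ together with the complementary triangle $T=[5]\setminus e$. Equivalently:
\begin{itemize}
\item $\sum_e a\le k-1$ and $\sum_T a\le k-1$;
\item every other nonedge and every other triangle has weight $\ge k$.
\end{itemize}

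First we restrict the graph. The same weight-summing argument, using the extra triangles and nonedges of $G$, should rule out every $G$ with triangles other than $K_5-e$; we expect to show that any extra nonedge leads to an inconsistency. Then, for $G=K_5-e$ with $e=\{4,5\}$ and $T=\{1,2,3\}$, the conditions become:
\begin{itemize}
\item $a_4+a_5\le k-1$ and $a_1+a_2+a_3\le k-1$;
\item the remaining triangles $\{i,j,4\}$ and $\{i,j,5\}$, for $i,j\le3$, have weight $\ge k$.
\end{itemize}
Write $s=a_1+a_2+a_3$. Choosing the pair $i,j$ of smallest total weight, which is at most $\lfloor 2s/3\rfloor$, we get $a_4,a_5\ge k-\lfloor 2s/3\rfloor$. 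Hence a solution exists iff some $s\le k-1$ admits a distribution of $a_1,a_2,a_3$ with minimal pair sum $m$ such that $2(k-m)\le k-1$, i.e.\ $m\ge (k+1)/2$. Since $m\le\lfloor 2s/3\rfloor\le\lfloor 2(k-1)/3\rfloor$, we need $\lfloor 2(k-1)/3\rfloor\ge\lceil (k+1)/2\rceil$, with the extra constraint that the balanced triple $(a_1,a_2,a_3)$ actually attains the required $m$. Checking this residue-by-residue modulo $6$ should give exactly $k=7$ or $k\ge9$; for instance $k=8$ fails, while $k=7$ works via ${\bf a}=(2,2,2,2,2)$.

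The main obstacle is the exclusion step in (b): showing that no other graph with triangles on $5$ vertices works. We would try first to prove that any additional nonedge $f$ forces $a$-weight constraints contradicting the triangle bounds, splitting into cases by whether $f$ meets $e$.
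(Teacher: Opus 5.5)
Your part (a) is fine and follows the paper's argument. Facets in different components force the admissible family to be exactly one complementary pair of nonedges. Then a case check over the graphs whose complement has a perfect matching, i.e.\ the spanning subgraphs of $K_{2,2}$, finishes. One harmless slip: $K_{1,3}$ does not belong on your list, since its complement $K_3\sqcup K_1$ has no perfect matching.

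Part (b) has a genuine gap at the reduction step. Your claim that $C_1\cup C_2=[5]$ forces $\{|C_1|,|C_2|\}=\{2,3\}$ is false. Two admissible triangles $C_1=\{w,p,q\}$ and $C_2=\{w,r,s\}$ meeting in one vertex also cover $[5]$, and their facets $\{r,s\}$ and $\{p,q\}$ are disjoint. So you have not shown that the admissible family must be $\{e,[5]\setminus e\}$. The paper spends the first half of its proof of (b) excluding exactly this configuration. It orders $a_p\le a_q$, $a_r\le a_s$ and shows that $\{p,r\}$ or $\{w,p,r\}$ is admissible and bridges the two components. It can also be done in your summing style; write $a(C)=\sum_{i\in C}a_i$.
\begin{itemize}
\item For $x\in\{p,q\}$ and $y\in\{r,s\}$, either $\{x,y\}$ is a nonedge or $\{w,x,y\}$ is a triangle, since $w$ is adjacent to $x$ and $y$.
\item In both cases the corresponding facet meets $\{p,q\}$ and $\{r,s\}$, so it must be non-admissible. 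This gives $a_w+a_x+a_y\ge k$.
\item Adding this for $\{p,r\}$ and $\{q,s\}$ gives $2a_w+a_p+a_q+a_r+a_s\ge 2k$, contradicting $a(C_1)+a(C_2)\le 2k-2$.
\end{itemize}

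Second, you only announce the exclusion of graphs other than $K_5-e$, so the ``only if'' direction of (b) is not proved. Your intended route does work, and is shorter than the paper's. The paper orders $a_1\le a_2$ and $a_3\le a_4\le a_5$, forces the edges $\{1,3\},\{1,4\}$, derives $a_5\le a_1-1$ from the triangle $\{1,3,4\}$, and then forces the remaining edges. Your route goes as follows.
\begin{itemize}
\item Any further nonedge $f$ has one endpoint in $e$ and one in $T$.
\item The $3$-set $[5]\setminus f$ is then either a triangle different from $T$, or contains a nonedge $g\neq e$ disjoint from $f$. Both are non-admissible.
\item Hence $|{\bf a}|=a(f)+a([5]\setminus f)\ge 2k$, whereas $|{\bf a}|=a(e)+a(T)\le 2k-2$.
\end{itemize}
This still has to be written out.

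Finally, your witness for $k=7$ is wrong. With ${\bf a}=(2,2,2,2,2)$, each triangle $\{i,j,4\}$ has weight $6\le k-1$, so it is admissible and connects the complex. Your own recipe $a_4=a_5=k-m=3$ gives ${\bf a}=(2,2,2,3,3)$, which is the paper's vector up to relabeling. Your criterion $\lfloor 2(k-1)/3\rfloor\ge\lceil (k+1)/2\rceil$ is correct and is equivalent to the paper's inequality $k\le\lfloor\frac{k-1}{2}\rfloor+\lfloor\frac{2(k-1)}{3}\rfloor$.
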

\begin{proof}
Set $\Delta_{\bf a}^{(k)}=\mathcal{H}_{\bf a}^{(k)}(G)$.  

(a) Suppose that there exists ${\bf a}\in\mathbb{N}^{4}$ such that $\Delta_{\bf a}(I_{\Delta}^{(k)})$ is disconnected. Choose two facets $[4]\setminus C_{1}$ and $[4]\setminus C_{2}$ belonging to distinct connected components of $\Delta_{\bf a}(I_{\Delta}^{(k)})$. Now, since $G$ is triangle-free, we have $[4]=C_{1}\cup C_{2}$ and $|C_{1}|=|C_{2}|=2$. After relabeling the vertices, we may assume that $C_{1}=\{1,2\}$ and $C_{2}=\{3,4\}$. Set $\mathcal{H}_{\bf a}^{(k)}=\mathcal{H}_{\bf a}^{(k)}(G)$. Since $C_{1},C_{2}\in E(\mathcal{H}_{\bf a}^{(k)})$, we have  $\{1,2\},\{3,4\}\notin E(G)$, $a_{1}+a_{2}\leq k-1$ and $a_{3}+a_{4}\leq k-1$. We claim that if $i\in C_{1}$, $j\in C_{2}$, and $\{i,j\}\notin E(G)$, then we have $a_{i}+a_{j}\geq k$. Indeed, suppose that $a_{i}+a_{j}\leq k-1$. Then we have $\{i,j\}\in E(\mathcal{H}_{\bf a}^{(k)})$, and hence, $[4]\setminus\{i,j\}$ is a facet of $\Delta_{\bf a}(I_{\Delta}^{(k)})$. This implies that $[4]\setminus C_{1}$ and $[4]\setminus C_{2}$ live in the same connected component of $\Delta_{\bf a}(I_{\Delta}^{(k)})$, which is a contradiction. Since $\{1,2\},\{3,4\}\notin E(G)$, we see that $G$ is a spanning subgraph of $K_{2,2}$. We can distinguish the following cases. 

{\bf Case 1.} Suppose that $G$ is isomorphic to a 4-cycle $C_{4}$. In this case, we set ${\bf a}=(0,0,0,0)$. Then we see that $E(\mathcal{H}_{\bf a}^{(k)})=\{\{1,2\},\{3,4\}\}$, and thus, we obtain $\Delta_{\bf a}(I_{\Delta}^{(k)})=\langle\{1,2\},\{3,4\}\rangle$, which is disconnected. 

{\bf Case 2.} Suppose that $G$ is isomorphic to $P_{4}$. Then there exist $i\in\{1,2\}$ and $j\in\{3,4\}$ such that $\{i,j\}\notin E(G)$. Notice that $a_{i}+a_{j}\geq k$. If $k=1$, then, since $a_{1}+a_{2}\leq 0$ and $a_{3}+a_{4}\leq0$, we obtain ${\bf a}=(0,0,0,0)$, a contradiction. Suppose that $k\geq2$. We may assume that $i=1$ and $j=3$. Set ${\bf a}=(k-1,0,1,0)$. Then $a_{1}+a_{2}=k-1$, $a_{3}+a_{4}=1\leq k-1$, and $a_{1}+a_{3}=k$. On the other hand, we know $a_{1}+a_{3}=k>k-1$. Hence, we see that $\Delta_{\bf a}(I_{\Delta}^{(k)})=\langle\{1,2\},\{3,4\}\rangle$, which is disconnected. 

{\bf Case 3.} Suppose that $G$ is isomorphic to $P_{3}\sqcup K_{1}$. We may assume that $E(G)=\{\{1,3\},\{2,3\}\}$. Since $\{1,4\},\{2,4\}\notin E(G)$, we have $a_{1}+a_{4}\geq k$ and $a_{2}+a_{4}\geq k$. Also, since $a_{3}+a_{4}\leq k-1$, we have $a_{4}\leq k-1$, and hence, $a_{1},a_{2}\geq1$. Thus, we get $2\leq a_{1}+a_{2}\leq k-1$, which implies that $k\geq3$. Conversely, let $k\geq3$ and set ${\bf a}=(1,1,0,k-1)$. Then, $a_{1}+a_{2}=2\leq k-1$, $a_{3}+a_{4}=k-1$, and hence, $a_{1}+a_{4}=a_{2}+a_{4}=k$. Hence, we see that $E(\mathcal{H}_{\bf a}^{(k)})=\{\{1,2\},\{3,4\}\}$, which implies that $\Delta_{\bf a}(I_{\Delta}^{(k)})=\langle\{1,2\},\{3,4\}\rangle$ is disconnected. 

{\bf Case 4.} Suppose that $G$ is isomorphic to $2K_{2}$, $K_{2}\sqcup 2K_{1}$, or $4K_{1}$. After relabeling vertices, we may assume that $\{1,4\},\{2,3\}\notin E(G)$. Now, we have $a_{1}+a_{4}\geq k$ and $a_{2}+a_{3}\geq k$. Hence, we obtain $a_{1}+a_{2}+a_{3}+a_{4}\geq2k$. On the other hand, we know $a_{1}+a_{2}+a_{3}+a_{4}\leq2(k-1)$, which contradicts the above inequality. 

Therefore, there exists ${\bf a}\in\mathbb{N}^{4}$ such that $\Delta_{\bf a}(I_{\Delta}^{(k)})$ is disconnected if and only if one of (1),(2),(3) holds, as required. 

(b) Suppose that there exists ${\bf a}\in\mathbb{N}^{5}$ such that $\Delta_{\bf a}(I_{\Delta}^{(k)})$ is disconnected. Choose two facets $[5]\setminus C_{1}$ and $[5]\setminus C_{2}$ belonging to distinct connected components of $\Delta_{\bf a}(I_{\Delta}^{(k)})$. Notice that $C_{1},C_{2}\in E(\mathcal{H}_{\bf a}^{(k)})$ and $([5]\setminus C_{1})\cap([5]\setminus C_{2})=\emptyset$. Hence, $C_{1}\cup C_{2}=[5]$. By the definition of $\mathcal{H}_{\bf a}^{(k)}$, we know $2\leq|C_{i}|\leq3$ for $i=1,2$. First, we suppose that $|C_{1}|=|C_{2}|=3$. Since $|C_{1}\cap C_{2}|=1$, we can write $C_{1}=\{w,p,q\}$ and $C_{2}=\{w,r,s\}$. We may assume that $a_{p}\leq a_{q}$ and $a_{r}\leq a_{s}$. Since $C_{1},C_{2}\in E(\mathcal{H}_{\bf a}^{(k)})$, we have $a_{w}+a_{p}+a_{q}\leq k-1$ and $a_{w}+a_{r}+a_{s}\leq k-1$, which imply that $$a_{p}\leq\frac{k-1-a_{w}}{2}, \mbox{ and }a_{r}\leq\frac{k-1-a_{w}}{2},$$
and hence, we get $a_{w}+a_{p}+a_{r}\leq k-1$. Suppose that $\{p,r\}\notin E(G)$. Then, since $a_{p}+a_{r}\leq k-1$, we have $\{p,r\}\in E(\mathcal{H}_{\bf a}^{(k)})$, and thus, $[5]\setminus \{p,r\}\in\Delta_{\bf a}(I_{\Delta}^{(k)})$. Also, we see that $([5]\setminus\{p,r\})\cap([5]\setminus C_{1})\neq\emptyset$ and $([5]\setminus\{p,r\})\cap([5]\setminus C_{2})\neq\emptyset$, a contradiction. Hence, $\{p,r\}\in E(G)$ and we see that $\{w,p,r\}$ is a triangle of $G$. Since $a_{w}+a_{p}+a_{r}\leq k-1$, we have $\{w,p,r,\}\in E(\mathcal{H}_{\bf a}^{(k)})$ and we see that $([5]\setminus C_{1})\cap([5]\setminus\{w,p,r\})=\{s\}$ and $([5]\setminus C_{2})\cap([5]\setminus\{w,p,r\})=\{q\}$, a contradiction. Therefore, we may assume that $|C_{1}|=2$ and $|C_{2}|=3$ and, after relabeling vertices, that $C_{1}=\{1,2\}$ and $C_{2}=\{3,4,5\}$. Also, we may assume that $a_{1}\leq a_{2}$ and $a_{3}\leq a_{4}\leq a_{5}$. We now prove that $G=K_{5}-\{1,2\}$. By assumption, we have $a_{1}\leq \frac{k-1}{2}$ and $a_{3}\leq\frac{k-1}{3}$. Also, since $a_{4}\leq a_{5}$, we have $2a_{4}\leq a_{4}+a_{5}\leq k-1-a_{3}\leq k-1$, and hence, we get $a_{4}\leq\frac{k-1}{2}$. Moreover, we see that $a_{1}+a_{3}\leq\frac{k-1}{2}+\frac{k-1}{3}\leq k-1$ and $a_{1}+a_{4}\leq k-1$. If $\{1,3\}\notin E(G)$, then, we have $\{1,3\}\in E(\mathcal{H}_{\bf a}^{(k)})$, which is a contradiction to the choice of $C_{1}$ and $C_{2}$. Hence, $\{1,3\}\in E(G)$. By the same argument, we also have $\{1,4\}\in E(G)$. Moreover, if $a_{1}+a_{3}+a_{4}\leq k-1$, then we have $\{1,3,4\}\in E(\mathcal{H}_{\bf a}^{(k)})$, which is a contradiction to the choice of $C_{1}$ and $C_{2}$. Hence, we obtain $a_{1}+a_{3}+a_{4}\geq k$. On the other hand, since $a_{3}+a_{4}+a_{5}\leq k$, we get $a_{5}\leq a_{1}-1$. Hence, since $a_{3}\leq a_{4}\leq a_{5}$, we obtain that $a_{i}\leq a_{1}-1$ for each $i=3,4,5$. Suppose that $\{1,5\}\notin E(G)$. Since $a_{5}\leq a_{1}-1$ and $a_{1}\leq\lfloor\frac{k-1}{2}\rfloor$, we have $a_{1}+a_{5}\leq2a_{1}-1\leq k-2$. Hence, we obtain $\{1,5\}\in E(\mathcal{H}_{\bf a}^{(k)})$, which means that $[5]\setminus\{1,5\}\in\mathcal{F}(\Delta_{\bf a}(I_{\Delta}^{(k)}))$. Since $\{2,3,4\}\cap([5]\setminus C_{1})\neq\emptyset$ and $\{2,3,4\}\cap([5]\setminus C_{2})\neq\emptyset$, we have a contradiction. Hence, $\{1,5\}\in E(G)$. Take $j\in\{3,4,5\}$. Then we have $a_{2}+a_{j}\leq a_{2}+a_{1}-1<k-1$. If $\{2,j\}\notin E(G)$, then we have $\{2,j\}\in E(\mathcal{H}_{\bf a}^{(k)})$, which leads us to a contradiction. Hence, we obtain that $\{2,3\},\{2,4\},\{2,5\}\in E(G)$, and thus, we obtain that $G=K_{5}-\{1,2\}$. Now, since $a_{1}+a_{2}\leq k-1$ and $a_{1}\leq a_{2}$, we know $a_{1}\leq\lfloor\frac{k-1}{2}\rfloor$. Also, we see that $a_{3}+a_{4}\leq\lfloor\frac{2(k-1)}{3}\rfloor$. Therefore, we obtain 
$k\leq\lfloor\frac{k-1}{2}\rfloor+\lfloor\frac{2(k-1)}{3}\rfloor$. Write $k-1=6q+r$, where $0\leq r\leq5$. Then we have 
\[
\left\lfloor\frac{k-1}{2}\right\rfloor+\left\lfloor\frac{2(k-1)}{3}\right\rfloor-(k-1)=
\begin{cases}
q, & r=0,2,3,4,5, \\
q-1, & r=1. 
\end{cases}
\]
Therefore, we obtain that $k=7$ or $k\geq9$. 

Conversely, we suppose that $G=K_{5}-\{1,2\}$ and either $k=7$ or $k\geq9$. Set $a_{1}=\lfloor\frac{k-1}{2}\rfloor$ and $a_{2}=\lceil\frac{k-1}{2}\rceil$. Write $k-1=3q+r$, where $0\leq r\leq2$. Define 
\[
(a_{3},a_{4},a_{5})=
\begin{cases}
(q,q,q), & r=0, \\
(q,q,q+1), & r=1, \\
(q,q+1,q+1), & r=2. 
\end{cases}
\]
Notice that $a_{1}+a_{2}=k-1$, $a_{3}+a_{4}+a_{5}=k-1$, and $a_{3}+a_{4}=\lfloor\frac{2(k-1)}{3}\rfloor$. Since $k=7$ or $k\geq9$, we have 
$a_{1}+a_{3}+a_{4}>k-1$. Also, since $a_{2}\geq a_{1}$ and $a_{3}\leq a_{4}\leq a_{5}$, we see that $\{1,2\}\in E(\mathcal{H}_{\bf a}^{(k)})$ and $\{3,4,5\}\in E(\mathcal{H}_{\bf a}^{(k)})$. Since $G=K_{5}-\{1,2\}$, the only non-edge of $G$ is $\{1,2\}$. Since $a_{1}+a_{2}=k-1$, we have $\{1,2\}\in E(\mathcal{H}_{\bf a}^{(k)})$. Also, since $a_{3}+a_{4}+a_{5}=k-1$, we obtain $\{3,4,5\}\in E(\mathcal{H}_{\bf a}^{(k)})$. Every other triangle of $G$ is of the form $\{p,i,j\}$, where $p\in\{1,2\}$ and $i,j\in\{3,4,5\}$. Since $a_{2}\geq a_{1}$ and $a_{3}\leq a_{4}\leq a_{5}$, we obtain that $a_{p}+a_{i}+a_{j}\geq a_{1}+a_{3}+a_{4}\geq k$. Hence, none of these triangles belongs to $E(\mathcal{H}_{\bf a}^{(k)})$. Therefore, $E(\mathcal{H}_{\bf a}^{(k)})=\{\{1,2\},\{3,4,5\}\}$, that is, $\Delta_{\bf a}(I_{\Delta}^{(k)})=\langle\{1,2\},\{3,4,5\}\rangle$, which is disconnected. Therefore, the assertion follows. 
\end{proof}

The preceding lemmas allow us to establish the main result of this section, which gives a complete classification of $S_{2}\mbox{-}\depth$ of symbolic powers of complementary edge ideals.  Here, $H\subseteq G$ means that $G$ contains an induced subgraph isomorphic to $H$. 

\begin{thm}\label{S_2main}
Let $G$ be a graph on $[n]$ with $n\geq3$, without isolated vertices and $I=I_c(G)$. The following statements hold.
\begin{enumerate}
\item[\textup{(a)}] If $G$ is triangle-free, then
\[
(S_{2}\mbox{-}\depth S/I^{(k)})_{k\geq1}=
\begin{cases}
(n-3,n-3,\ldots), & \text{if } C_4\subseteq G, \\
(n-2,n-3,n-3,\ldots), & \text{if } P_4\subseteq G, \, C_4\nsubseteq G, \\
(n-2,n-2,n-3,n-3,\ldots), & \text{if } P_{3}\sqcup K_{1}\subseteq G,\, C_4\nsubseteq G,\, P_4\nsubseteq G, \\
(n-2,n-2,\ldots), & \text{ otherwise}.
\end{cases}
\]
\item[\textup{(b)}] If $G$ contains a triangle, then
\[
S_{2}\mbox{-}\depth S/I^{(k)}=
\begin{cases}
n-4, &
\text{if } K_{5}-e\subseteq G, 
\text{ and } k=7\mbox{ or }k\geq9,\\ 
n-3, &
\text{otherwise}.
\end{cases}
\]
\end{enumerate}
In particular, the following conditions are equivalent: 
\begin{enumerate}
\item[\textup{(1)}] $K_{5}-e\nsubseteq G$. 
\item[\textup{(2)}] $S_{2}\mbox{-}\depth\, S/I_{c}(G)^{(k)}\geq S_{2}\mbox{-}\depth\, S/I_{c}(G)^{(k+1)}$ for all $k\geq1$.
\end{enumerate}
Moreover, if $G$ contains $K_{5}-e$, then 
$S_{2}\mbox{-}\depth S/I_{c}(G)^{(7)}<S_{2}\mbox{-}\depth S/I_{c}(G)^{(8)}$. 
\end{thm}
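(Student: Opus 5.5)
The plan is to reduce everything to Lemma~\ref{lemma for S_2-depth} and then to Lemma~\ref{S_2Lemma}, via a description of links of small faces. Set $\Delta=\Delta(I_c(G))$. By Lemma~\ref{lemma for S_2-depth}, in the triangle-free case the value $n-3$ occurs exactly when there is a face $F$ with $|F|=n-4$ and some $\mathbf a$ making $\Delta_{\mathbf a}(I^{(k)}_{\link_\Delta F})$ disconnected. In the triangle case the value $n-4$ occurs exactly when the same happens with $|F|=n-5$; otherwise the value is the other element of the two-element set.

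For $U=[n]\setminus F$ with $|U|\ge 4$, the proof of Lemma~\ref{S_2depth lemma} shows two things. First, every such $F$ is a face. Second, $\link_\Delta F=\langle U\setminus C: C\in E((G|_U)^c)\cup\mathcal T(G|_U)\rangle$, which is exactly the complex of Lemma~\ref{S_2Lemma} for the induced graph $G|_U$. Hence the condition ``exists $F$ with $|F|=n-4$ (resp.\ $n-5$) and a disconnecting $\mathbf a$'' becomes ``there is a $4$-subset (resp.\ $5$-subset) $U$ such that $G|_U$ satisfies the criterion of Lemma~\ref{S_2Lemma}(a) (resp.\ (b))''. This holds because $\mathbb N^{V(\link F)}=\mathbb N^U$.

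For (a), $G$ is triangle-free, so every $G|_U$ is triangle-free. By Lemma~\ref{S_2Lemma}(a), the $4$-vertex induced subgraphs producing disconnection are $C_4$ (all $k$), $P_4$ ($k\ge2$) and $P_3\sqcup K_1$ ($k\ge3$). Reading off, for each $k$, whether such an induced subgraph exists yields the four sequences, with the hierarchy of cases given by the thresholds on $k$.

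For (b), $G$ contains a triangle, but a $5$-subset $U$ need not induce a graph with a triangle. This needs a short check. If $G|_U$ is triangle-free, then by Lemma~\ref{S_2depth lemma} (its ``moreover'' part) disconnection forces $|U|\le4$, so such $U$ contribute nothing. If $G|_U$ has a triangle, Lemma~\ref{S_2Lemma}(b) applies: disconnection occurs iff $G|_U\cong K_5-e$ and $k=7$ or $k\ge9$. This gives (b). The isolated-vertex hypothesis in Lemma~\ref{S_2depth lemma} concerns $G$, not $G|_U$, so it is harmless here.

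For the equivalence, first suppose $K_5-e\nsubseteq G$. Then the sequences in (a) are visibly non-increasing, and in the triangle case with no $K_5-e$ the sequence is constant $n-3$. This proves (1)$\Rightarrow$(2). Conversely, if $K_5-e\subseteq G$, then $G$ has a triangle, and (b) gives $n-4$ at $k=7$ and $n-3$ at $k=8$. This yields the final strict inequality and (2)$\Rightarrow$(1).

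The main obstacle I expect is the identification of $\link_\Delta F$ with the complex of Lemma~\ref{S_2Lemma} for the induced graph. In particular, one must make sure that the faces of $\Delta$ restricted above $F$ correspond to non-edges and triangles inside $U$ only. Facets $[n]\setminus C$ with $C\not\subseteq U$ do not contain $F$, so this should follow from Theorem~\ref{min prime} as in Lemma~\ref{S_2depth lemma}.
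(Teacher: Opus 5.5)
Your proposal is correct and follows essentially the same route as the paper. You reduce via Lemma~\ref{lemma for S_2-depth} to faces $F$ with $|F|=n-4$ (resp.\ $n-5$), identify $\link_{\Delta}F$ with the complex of the induced subgraph $G|_{W}$ on $W=[n]\setminus F$, apply Lemma~\ref{S_2Lemma}, and in (b) discard triangle-free $5$-subsets using the second part of Lemma~\ref{S_2depth lemma}; your explicit check of the monotonicity equivalence and of the jump from $k=7$ to $k=8$ spells out what the paper leaves as immediate.
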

\begin{proof}
(a) By Lemma \ref{lemma for S_2-depth}(a), we know that $S_{2}\mbox{-}\depth S/I_{c}(G)^{(k)}=n-3$ if and only if there exist $F\in\Delta(I_{c}(G))$ and ${\bf a}\in\mathbb{N}^{V(\link_{\Delta(I_{c}(G))}F)}$ such that $|F|=n-4$ and $\Delta_{\bf a}(I_{\link_{\Delta(I_{c}(G))}F}^{(k)})$ is disconnected. Set $W=[n]\setminus F$, and define $$\Gamma_{W}=\langle W\setminus C\,\,:C\in E((G|_{W})^{c})\cup\mathcal{T}(G|_{W})\rangle.$$
Then, we see that $\link_{\Delta}F=\Gamma_{W}$. Since $|F|=n-4$, we have $|W|=4$. Moreover, for each $w\in W$, we have $|F\cup\{w\}|=n-3$, which implies that $F\cup\{w\}\in\Delta$, and hence, we see that $V(\link_{\Delta})=W$. Therefore, we obtain that $S_{2}\mbox{-}\depth S/I_{c}(G)^{(k)}=n-3$ if and only if there exist a subset $W\subset[n]$ with $|W|=4$ and a vector ${\bf a}\in\mathbb{N}^{4}$ such that $\Delta_{\bf a}(I_{\Gamma_{W}}^{(k)})$ is disconnected. Thus, from Lemma \ref{S_2Lemma}(a), we have $S_{2}\mbox{-}\depth S/I_{c}(G)^{(k)}=n-3$ if and only if one of the following holds:
\begin{enumerate}[label=\textbullet]
\item  $G$ contains an induced $C_{4}$.
\item  $G$ contains an induced $P_{4}$ and $k\geq2$.
\item  $G$ contains an induced $P_{3}\sqcup K_{1}$ and $k\geq3$.
\end{enumerate}
If $G$ contains an induced $C_{4}$, then we have 
$$(S_{2}\mbox{-}\depth S/I_{c}(G)^{(k)})_{k\geq1}=(n-3,n-3,\ldots).$$ 
Also, if $P_4\subseteq G$ and $C_4\nsubseteq G$, then the above discussion together with Lemma~\ref{lemma for S_2-depth} yields 
$$(S_{2}\mbox{-}\depth S/I_{c}(G)^{(k)})_{k\geq1}=(n-2,n-3,n-3,\ldots).$$ 
Moreover,if $P_{3}\sqcup K_{1}\subseteq G$, $C_4\nsubseteq G$, and $P_4\nsubseteq G$, then we have 
$$(S_{2}\mbox{-}\depth S/I_{c}(G)^{(k)})_{k\geq1}=(n-2,n-2,n-3,n-3,\ldots).$$ 
Finally, if $G$ contains none of $C_{4}, P_{4}$, and $P_{3}\sqcup K_{1}$ as an induced subgraph, then, from Lemma \ref{S_2Lemma} (a), we obtain $\Delta_{\bf a}(I_{\Gamma_{W}}^{(k)})$ is connected for every $W\subset[n]$ with $|W|=4$ and every ${\bf a}\in\mathbb{N}^{|W|}$. Hence, from Lemma \ref{lemma for S_2-depth} (a), we see that 
$$S_{2}\mbox{-}\depth S/I_{c}(G)^{(k)}=n-2\mbox{ for all }k\geq1,$$
which completes the proof. 

(b) By Lemma \ref{lemma for S_2-depth}(b), we know that $S_{2}\mbox{-}\depth S/I_{c}(G)^{(k)}=n-4$ if and only if there exist $F\in\Delta(I_{c}(G))$ and ${\bf a}\in\mathbb{N}^{V(\link_{\Delta(I_{c}(G))}F)}$ such that $|F|=n-5$ and $\Delta_{\bf a}(I_{\link_{\Delta(I_{c}(G))}F}^{(k)})$ is disconnected. Set $W=[n]\setminus F$, and define $$\Gamma_{W}=\langle W\setminus C\,\,:C\in E((G|_{W})^{c})\cup\mathcal{T}(G|_{W})\rangle.$$
Then, we see that $\link_{\Delta}F=\Gamma_{W}$. Since $|F|=n-5$, we have $|W|=5$. Also, we have $V(\link_{\Delta})=W$. Therefore, we obtain that $S_{2}\mbox{-}\depth S/I_{c}(G)^{(k)}=n-4$ if and only if there exist a subset $W\subset[n]$ with $|W|=5$ and a vector ${\bf a}\in\mathbb{N}^{5}$ such that $\Delta_{\bf a}(I_{\Gamma_{W}}^{(k)})$ is disconnected. Moreover, we see that $G|_{W}$ contains, a triangle. Indeed, if $G|_{W}$ is triangle-free, then, from Lemma \ref{S_2depth lemma}, we have $|W|=n-|F|\leq4$, a contradiction. 
Thus, by Lemma \ref{S_2Lemma}(b), $S_{2}\mbox{-}\depth S/I_{c}(G)^{(k)}=n-4$ if and only if there exists a subset $W\subset[n]$ with $|W|=5$ such that $G|_{W}\cong K_{5}-e$ and either $k=7$ or $k\geq9$, where $e\in E(K_{5})$. This shows the desired result and completes the proof. 
\end{proof}

As an immediate consequence, we determine  when the $S_{2}$-depth sequence stabilizes. 

\begin{cor}
Let $G$ be a graph without isolated vertices. Then we have 
$$S_{2}\mbox{-}\depth S/I_{c}(G)^{(k)}=S_{2}\mbox{-}\depth S/I_{c}(G)^{(9)}\mbox{ for all }k\geq9.$$
Moreover, this bound is sharp.
\end{cor}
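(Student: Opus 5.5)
The corollary follows from reading off Theorem~\ref{S_2main} case by case. If $G$ is triangle-free, part (a) shows that the sequence $(S_{2}\mbox{-}\depth S/I_{c}(G)^{(k)})_{k\geq1}$ is constant from $k=3$ on in each of the four cases. Hence it equals its value at $k=9$ for all $k\geq 9$; in fact it is already constant for $k\geq 3$.

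If $G$ contains a triangle, part (b) gives the value $n-4$ when $K_5-e\subseteq G$ and $k=7$ or $k\geq 9$, and the value $n-3$ otherwise. If $K_5-e\nsubseteq G$, the sequence is constantly $n-3$. If $K_5-e\subseteq G$, the value is $n-4$ for every $k\geq 9$, which again equals the value at $k=9$. One point needs care: Theorem~\ref{S_2main} assumes $n\geq 3$. Since $G$ has no isolated vertices and $E(G)\neq\emptyset$, the only graph with $n<3$ is $G=K_2$, where $I_c(G)=(1)$. I would either exclude this degenerate case or note that the statement is vacuous or trivial there.

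For sharpness, I would exhibit a graph for which the sequence is not constant from $k=8$ on. Take $G=K_5-e$, so $n=5$. This $G$ has no isolated vertices and contains a triangle, so Theorem~\ref{S_2main}(b) applies. It gives $S_{2}\mbox{-}\depth S/I^{(8)}=n-3=2$ and $S_{2}\mbox{-}\depth S/I^{(9)}=n-4=1$. Hence the equality with the $9$th term fails at $k=8$, so the threshold $9$ cannot be lowered. The same holds for any graph containing an induced $K_5-e$.

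No step is a real obstacle: all the content is in Theorem~\ref{S_2main}, specifically in Lemma~\ref{S_2Lemma}(b), which shows that $k=8$ is the unique exceptional exponent beyond $7$. The only thing to get right is what "sharp" means, namely that no threshold $k_0<9$ works for all graphs, and the $k=8$ failure for $K_5-e$ establishes exactly that.
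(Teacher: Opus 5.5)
Your proposal is correct and is exactly how the paper obtains this corollary, which it states as an immediate consequence of Theorem~\ref{S_2main}: the triangle-free sequences are constant from $k=3$ on, the triangle case is constant from $k=9$ on, and sharpness comes from any graph containing an induced $K_5-e$, where the value is $n-3$ at $k=8$ but $n-4$ at $k=9$. Your remark about the degenerate case $G=K_2$, where $I_c(G)=(1)$ and the statement is trivial, is a reasonable bit of extra care that the paper leaves implicit.
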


The following result provides a characterization of the Serre's condition $(S_{2})$ and of the Cohen--Macaulayness of $S/I_{c}(G)^{(k)}$, thereby extending and strengthening \cite[Theorem 4.5]{hk2026} to all symbolic powers of complementary edge ideals, including $k=2$.

\begin{thm}\label{char of CM and S_2}
Let $G$ be a graph on  $n\geq3$ vertices, and let $k\geq2$ be an integer. Then $S/I_{c}(G)^{(k)}$ satisfies Serre's condition $(S_{2})$ if and only if $S/I_{c}(G)^{(k)}$ is Cohen--Macaulay. Moreover, these conditions are equivalent to the following conditions:
\begin{enumerate}
\item[\textup{(1)}] If $k=2$, then either  $G\cong K_{n}$, or $G$ has no isolated vertices and every connected component of $G$ is a star graph, or $G\cong K_{2}\sqcup(n-2)K_{1}$. 
\item[\textup{(2)}] If $k\geq3$, then either $G\cong K_{n}$, or $G$ is a star graph, or $G$ is the disjoint union of edges, or $G\cong K_{2}\sqcup(n-2)K_{1}$. 
\end{enumerate}
In particular, the Cohen--Macaulayness of $S/I_{c}(G)^{(k)}$ does not depend on the characteristic of the base field for all $k\geq1$. 
\end{thm}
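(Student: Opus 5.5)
Cohen--Macaulay always implies $(S_2)$, so the work is to show that $(S_2)$ forces the combinatorial conditions (1)/(2), and that these conditions give Cohen--Macaulayness. The tool is $(S_2)$ read through the $S_2$-depth: $S/I^{(k)}$ satisfies $(S_2)$ exactly when $S_{2}\mbox{-}\depth S/I^{(k)}=\dim S/I^{(k)}=n-\height I_c(G)$.

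First we dispose of isolated vertices. Let $A\neq\emptyset$ be the set of isolated vertices and $H=G-A$. If $H\cong K_2$, the ideal $I_c(G)^{(k)}=({\bf x}_A^k)$ is principal, hence Cohen--Macaulay. Otherwise $I_c(G)^{(k)}={\bf x}_A^kI_c(H)^{(k)}$ has the height-one minimal prime $(x_w)$ for $w\in A$, as well as a height $\geq2$ prime $P_C$. The argument of Proposition~\ref{S_r-I_c(G)} then applies verbatim at the level of symbolic powers. The link of $F=[n]\setminus(C\cup\{w\})$ is $\langle C,\{w\}\rangle$. Choosing ${\bf a}={\bf 0}$, Lemma~\ref{deg cpx} shows that $\Delta_{\bf 0}$ of the $k$-th symbolic power of this link is the link itself, which is disconnected. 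By Proposition~\ref{S_2General} this gives $S_2$-depth $\leq |F|+1=n-|C|<n-1=\dim$, so $(S_2)$ fails.

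Now assume $G$ has no isolated vertices. If $G$ contains a triangle, Theorem~\ref{S_2main}(b) gives $S_2$-depth $\leq n-3$. Since $\dim S/I^{(k)}=n-2$ unless $G=K_n$, $(S_2)$ forces $G=K_n$. Conversely, $I_c(K_n)$ is matroidal, so every symbolic power is Cohen--Macaulay by \cite[Theorem 2.1]{v2011}. If $G$ is triangle-free, then $\dim=n-2$, and Theorem~\ref{S_2main}(a) shows that $(S_2)$ for $S/I^{(k)}$ is equivalent to $G$ having no induced $C_4$ or $P_4$ when $k=2$, and additionally no induced $P_3\sqcup K_1$ when $k\geq3$.

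It remains to translate these conditions into graph classes. A triangle-free, $C_4$-free, $P_4$-free graph is a triangle-free cograph, so each component is complete bipartite. No $C_4$ and no triangle then forces each component to be a star, giving (1). If moreover $P_3\sqcup K_1$ is excluded, then either all components are $K_2$, or there is a single component that is a star; this gives (2).

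For the converse in the triangle-free case, Theorem~\ref{PowerTwo} for $k=2$ and Corollary~\ref{asymptotic}(a) for all $k$ give $\depth=n-2=\dim$. Hence $S/I^{(k)}$ is Cohen--Macaulay, which closes the equivalence. The main obstacle is the isolated-vertex reduction, which needs care with the degree complex of the link, but the ${\bf a}={\bf 0}$ choice handles it. Finally, each condition is purely combinatorial, so Cohen--Macaulayness is independent of $\operatorname{char}K$; the case $k=1$ is covered by Proposition~\ref{S_r-I_c(G)} together with the known characterization in \cite{fm2025,hqm2026}.
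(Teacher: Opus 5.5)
Your proof is correct. For graphs without isolated vertices it is the paper's argument:

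\begin{itemize}
\item the criterion that $(S_2)$ holds iff $S_{2}\mbox{-}\depth=\dim$ (this is \cite[Theorem A]{fSerre2026}, which you should cite, since it is not proved in the paper);
\item Theorem~\ref{S_2main}(b) to rule out triangles unless $G\cong K_n$;
\item Theorem~\ref{S_2main}(a) to reduce to forbidden induced $C_4$, $P_4$, $P_3\sqcup K_1$;
\item Theorem~\ref{PowerTwo}, Corollary~\ref{asymptotic}(a) and the matroid result for the converse.
\end{itemize}

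Your cograph argument just makes explicit the graph-theoretic translation that the paper states without proof.

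The real difference is the isolated-vertex case. The paper descends $(S_2)$ from $S/I^{(k)}$ to $K[V(\link F)]/I^{(2)}_{\link F}$ via \cite[Corollary 3.5(i)]{tt2012}. It then contradicts the diameter criterion of \cite[Theorem 3.2]{rty2011}, since $w$ is isolated in the $1$-skeleton of $\langle C,\{w\}\rangle$.

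You instead feed the disconnected link $\langle C,\{w\}\rangle$, with ${\bf a}={\bf 0}$, directly into Proposition~\ref{S_2General}. This gives $S_{2}\mbox{-}\depth S/I^{(k)}\le n-|C|<n-1=\dim S/I^{(k)}$. In fact the facet $[n]\setminus C$ alone gives $\delta(\Delta)\le n-|C|$, which already suffices.

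Your route is shorter and stays within the paper's own machinery. However, it applies the $S_2$-depth criterion to a module that is not equidimensional, since it has minimal primes of height $1$ and of height $\ge 2$. This is legitimate to exactly the extent that the paper's own use of the criterion is: in Case 1 the paper applies it to graphs with both triangles and non-edges, which are also non-equidimensional. The paper's Case 2 argument, by contrast, does not rely on the criterion at all.
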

\begin{proof}
Set $I=I_{c}(G)$ and fix $k\geq2$. 
Suppose that $S/I^{(k)}$ satisfies Serre's condition $(S_{2})$. We distinguish the following cases. 

{\bf Case 1.} Suppose that $G$ has no isolated vertices. First, if $G\cong K_{n}$, then, since $I$ is the Stanley--Reisner ideal of a matroid complex, we know $S/I^{(k)}$ is Cohen--Macaulay from \cite[Theorem 2.1]{v2011}. Hence, we assume that $G\not\cong K_{n}$. Then, from Theorem \ref{primay decomp}, we know $\dim S/I^{(k)}=n-2$. Hence, from \cite[Theorem A]{fSerre2026}, we see that $I^{(k)}$ satisfies Serre's condition $(S_{2})$ if and only if $S_{2}\mbox{-}\depth S/I^{(k)}=n-2$. If $G$ contains triangles, from Theorem \ref{S_2main} (b), we have $S_{2}\mbox{-}\depth S/I^{(k)}\in\{n-3,n-4\}$. Hence, $G$ has no triangles. Suppose that $k=2$. From Theorem \ref{S_2main}(a) and the assumption that $G$ is triangle-free, we see that $S_{2}\mbox{-}\depth S/I^{(k)}=n-2$ if and only if every connected component of $G$ is a star graph. Next, suppose that $k\geq3$. From Theorem \ref{S_2main}(a), we know $S_{2}\mbox{-}\depth S/I^{(k)}=n-2$ if and only if $G$ does not contain $C_{4}, P_{4}$, and $P_{3}\sqcup K_{1}$ as an induced subgraph. Hence, we see that $S/I^{(k)}$ satisfies Serre's condition $(S_{2})$ if and only if $G$ is a star graph, or a disjoint union of edges. Hence, from Theorem \ref{PowerTwo} and Corollary~\ref{asymptotic}(a), we obtain that $S/I^{(k)}$ is Cohen--Macaulay. 

{\bf Case 2.} Suppose that $G$ has isolated vertices. We prove that $G\cong K_{2}\sqcup(n-2)K_{1}$. Let $A$ be the set of isolated vertices of $G$, and set $H=G-A$. If $H\cong K_{2}$, then, we have $G\cong K_{2}\sqcup(n-2)K_{1}$, and thus, we see that $I_{c}(G)$ is principal, which implies that $S/I_{c}(G)^{(k)}$ is Cohen--Macaulay. If $H\ncong K_{2}$,  then $|V(H)|\geq3$, and we show that this case does not happen. Suppose on the contrary that $|V(H)|\geq3$. Then $E(H^{c})\cup\mathcal{T}(H)\neq\emptyset$. Fix $C\in E(H^{c})\cup\mathcal{T}(H)$ and $w\in A$. By Theorem \ref{primay decomp}, we know that $P_{C}$ is a minimal prime ideal of $I_{c}(H)$.
Set $F=[n]\setminus(C\cup\{w\})$. As in the proof of Proposition \ref{S_r-I_c(G)}, we know that $\link_{\Delta(I)}F=\langle C, \{w\}\rangle$ is disconnected and has dimension at least 1. Since $S/I^{(k)}$ satisfies Serre's condition $(S_{2})$, from \cite[Corollary 3.5 (i)]{tt2012}, we know that $K[V(\link_{\Delta(I)}F)]/I_{\link_{\Delta(I)}F}^{(\ell)}$ also satisfies Serre's condition $(S_{2})$ for any $1\leq\ell\leq k$. In particular, since $k\geq2$, we obtain that $K[V(\link_{\Delta(I)}F)]/I_{\link_{\Delta(I)}F}^{(2)}$ satisfies Serre's condition $(S_{2})$. Moreover, $\dim K[V(\link_{\Delta(I)}F)]/I_{\link_{\Delta(I)}F}=|C|\geq2$, and thus, $\depth K[V(\link_{\Delta(I)}F)]/I_{\link_{\Delta(I)}F}^{(2)}\geq2$. Hence, from \cite[Theorem 3.2]{rty2011}, we obtain the diameter of the 1-skeleton of $\link_{\Delta(I)}F$ at most 2, which is a contradiction to the fact that $w$ is an isolated vertex of $(\link_{\Delta}F)^{(1)}$. Therefore, we obtain $H\cong K_{2}$, and hence, $G\cong K_{2}\sqcup(n-2)K_{1}$. 

Conversely, suppose that $G$ satisfies one of the conditions in (1) when $k=2$, or one of those listed in (2) when $k\geq3$. We show that $S/I^{(k)}$ is Cohen--Macaulay. If $G\cong K_{n}$, then $I$ is the Stanley--Reisner ideal of a matroid complex, and hence, from \cite[Theorem 2.1]{v2011}, we know $S/I^{(k)}$ is Cohen--Macaulay for all $k\geq1$. If $k=2$, $G$ has no isolated vertices, and every connected component of $G$ is a star graph, then from Theorem \ref{PowerTwo}, we know $\depth S/I^{(2)}=n-2=\dim S/I^{(2)}$, which means that $S/I^{(2)}$ is Cohen--Macaulay. If $k\geq3$ and $G$ is a star graph or the disjoint union of edges, then from Corollary \ref{asymptotic}(a), we know $\depth S/I^{(k)}=n-2=\dim S/I^{(k)}$, which means that $S/I^{(k)}$ is Cohen--Macaulay. Finally, if $G\cong K_{2}\sqcup(n-2)K_{1}$, then $I$ is principal and so $S/I^{(k)}$ is Cohen--Macaulay. 

The last statement follows from the characterizations given in (1) and (2), together with \cite[Theorem 2.8]{fm2025}.
\end{proof}


\section*{Acknowledgement}
The research of A. Ficarra was supported by the Grant JDC2023-051705-
I funded by MICIU/AEI/10.13039/501100011033 and by the FSE+ and also by
INDAM (Istituto Nazionale Di Alta Matematica). The research of S. Moradi is funded by Iran National Science Foundation (INSF) under project No. 40507515.
The research of Y. Muta was partially supported by ohmoto-ikueikai and JST SPRING Japan Grant Number JPMJSP2126. 



\end{document}